\newcommand{\bburl}[1]{\textcolor{blue}{\url{#1}}}
\newcommand\Item[1][]{%
	\ifx\relax#1\relax  \item \else \item[#1] \fi\abovedisplayskip=0pt\abovedisplayshortskip=0pt~\vspace*{-\baselineskip}}
\renewcommand{\tilde}{\widetilde}
\newcommand\be{\begin{equation}}
    \newcommand\ee{\end{equation}}
\newcommand\bi{\begin{itemize}}
    \newcommand\ei{\end{itemize}}
\newcommand\ben{\begin{enumerate}}
    \newcommand\een{\end{enumerate}}
\newtheorem{thm}{Theorem}[section]
\newtheorem{lem}[thm]{Lemma}
\newtheorem{prop}[thm]{Proposition}
\newtheorem{defi}[thm]{Definition}
\theoremstyle{remark}
\newtheorem*{rek}{Remark}
\newcommand{\re}{\mathbb{R}}
\renewcommand{\tilde}{\widetilde}
\DeclareMathOperator{\GL}{GL}
\DeclareMathOperator{\Real}{Re}
\renewcommand{\tilde}{\widetilde}
\renewcommand{\bar}{\overline}
\renewcommand{\epsilon}{\varepsilon}
\renewcommand{\Re}{\mathrm{Re}}
\newcommand{\N}{\mathrm{N}}
\newcommand{\A}{\mathbb{A}}
\newcommand{\CC}{\mathbb{C}}
\newcommand{\R}{\mathbb{R}}
\newcommand{\Q}{\mathbb{Q}}
\newcommand{\ka}{\mathfrak{a}}
\newcommand{\kp}{\mathfrak{p}}
\newcommand{\kq}{\mathfrak{q}}
\newcommand{\kn}{\mathfrak{n}}
\newcommand{\cO}{\mathcal{O}}
\let\@wraptoccontribs\wraptoccontribs
\numberwithin{equation}{section}
\begin{document}

	\title[Metric approach to zero-free regions]{A Metric Approach to Zero-Free Regions for $L$-Functions}

  \author{Nawapan Wattanawanichkul}
	
	\address[Nawapan Wattanawanichkul]{1409 West Green Street, University of Illinois Urbana-Champaign, Urbana, IL 61801, USA}
	\email{nawapan2@illinois.edu}

	\begin{abstract}
    For integers $m,m' \ge 1$, let $\pi$ and $\pi'$ be cuspidal automorphic representations of $\GL(m)$ and $\GL(m')$, respectively. We present a new proof of zero-free regions for $L(s, \pi)$ and for $L(s, \pi \times \pi')$  under the assumption that $\pi, \pi'$ or $L(s,\pi \times \pi')$ is self-dual. Our approach builds on ideas of ``pretentious'' multiplicative functions due to Granville and Soundararajan (as presented by Koukoulopoulos)  and the notion of a positive semi-definite family of automorphic $L$-functions due to Lichtman and Pascadi.
    \end{abstract}
        
    \keywords{Zero-free regions, Rankin--Selberg $L$-functions, Metrics}
    
    \maketitle
	%\tableofcontents
    
	\section{Introduction}
	
	\subsection{Notation and motivation} In 1896, Hadamard and de la Vall\'ee Poussin, as part of their independent proofs of the prime number theorem, established that the Riemann zeta function $\zeta(s)$ is nonzero on $\Real(s)=1$.  In 1899, de la Vallée Poussin further showed that 
	$\zeta(s)$ remains nonzero in a narrow region to the left of $\Real(s)=1$, where the width at height 
	$t$ is proportional to $(\log t)^{-1}$. Such a region is referred to as a \textit{classical zero-free region} of $\zeta(s)$. 
	
	To explain their argument, we let $\sigma, t \in \mathbb{R}$ and  $s= \sigma+it$. For $\sigma >1$, we know that
\begin{equation}\label{eq:euler}
		{\zeta(\sigma +it)}^{-1} = \prod_{p}\left(1- {p^{-(\sigma+it)}}\right).
	\end{equation}
	For $t \neq 0$, suppose that $\zeta(1+it) = 0$. By analyticity of $\zeta(s)$, we must have that for some $a\in \mathbb{C}$, $\zeta(\sigma+it) \sim a(\sigma-1)$ as $\sigma \to 1^+$.
	The only way that the product in \eqref{eq:euler} can tend to infinity as $\sigma \to 1^{+}$ is when $p^{-it}$ frequently points towards $-1$.  Consequently, $p^{-2it}$ must often point towards $1$. Thus, via \eqref{eq:euler}, $\zeta(s)$ should have a pole at $1+2it$, which is impossible as $\zeta(s)$ has only one simple pole at $s=1$ but $t \neq 0$. 
	
	Motivated by the theory of ``pretentious'' multiplicative functions developed by Granville and Soundararajan (see \cite{GS}),  Koukoulopoulos in \cite[Theorem 8.3]{D} formalized the idea of Hadamard and de la Vall\'ee Poussin through the following family of metrics. For any positive integer $n$, let $f(n)$ and $g(n)$ be multiplicative functions taking values in the unit circle. For any $\sigma > 1$, he defined 
	\begin{equation}\label{def:distance}
		{\mathbb{D}}_{\sigma}(f,g)^2 := \frac{1}{2} \sum_{p}\sum_{k=1}^{\infty} \frac{|f(p^k)-g(p^k)|^2\log p }{p^{k\sigma}},  
	\end{equation}
	where $\sigma$ serves as a parameter to be optimized later.  
	
	Let $\mu(n)$ be the M\"obius function. Koukoulopoulos considered the triangle inequality
    \[
    {\mathbb{D}}_{\sigma}(n^{-i\gamma},n^{i\gamma}) \le {\mathbb{D}}_{\sigma}(n^{-i\gamma},\mu(n)) +{\mathbb{D}}_{\sigma}(\mu(n),n^{i\gamma}) = 2{\mathbb{D}}_{\sigma}(\mu(n),n^{i\gamma}),
    \]
    which implies, after squaring both sides, that
	\begin{equation}
    \label{eq:triangle-intro}
		%\mathbb{D}_{\sigma}(1,n^{2i\gamma})^2 = 
        {\mathbb{D}}_{\sigma}(n^{-i\gamma},n^{i\gamma})^2\le 4{\mathbb{D}}_{\sigma}(\mu(n),n^{i\gamma})^2.
	\end{equation}
	The above inequality  rigorously demonstrates that if $p^{i\gamma}$ often points towards $-1$, then $p^{2i\gamma}$  frequently points towards $1$. Each distance in \eqref{eq:triangle-intro} can be expressed as a linear combination of the logarithmic derivatives of $L$-functions, up to an additive constant  $O(1)$. Specifically, \eqref{eq:triangle-intro} can be rewritten as
	\begin{equation*}%\label{eq:3-4-1}
	    0 \le -3\frac{\zeta'}{\zeta}(\sigma) -4\Real\Big(\frac{\zeta'}{\zeta}(\sigma+i\gamma)\Big)-\Real\Big(\frac{\zeta'}{\zeta}(\sigma+2i\gamma)\Big)+O(1),
	\end{equation*}
	which recasts the \textit{3-4-1} argument by Mertens. 
	
    Let $F$ be a number field with adele ring $\mathbb{A}_F$,
    $\cO_F$ the ring of integers of $F$, and $\N=\N_{F/\Q}$ the norm defined on nonzero ideals $\ka$ of $\cO_F$ by $\N\ka=|\cO_F/\ka|$.
    Let $\mathfrak{F}_{m}$ denote the set of cuspidal automorphic representations $\pi$ of $\GL_{m}(\mathbb{A}_F)$ whose central character $\omega_{\pi}$ is unitary.   %For each place $v$ of $F$, we write $v\mid\infty$ (resp. $v\nmid \infty$) if $v$ is archimedean (resp. non-archimedean).  If $v\nmid\infty$, then $q_v$ is the cardinality of the residue field of the local ring of integers $\mathcal{O}_v\subseteq F_v$.
    %Now, let $m \ge 1$ be an integer, and let $\mathfrak{F}_{m}$ be the set of all cuspidal automorphic representations $\pi$ of $\GL_m(\mathbb{A}_{\mathbb{Q}})$ whose central character $\omega_{\pi}$ is unitary.
    For convenience, we also define 
    \begin{equation}\label{def:f*}
        \mathfrak{F}_{m}^* := \{\pi \in \mathfrak{F}_m: \omega_{\pi} \text{ is trivial on the diagonally embedded positive reals.}\}.
    \end{equation} 
    In this paper, we restrict ourselves to the case where $\pi \in \mathfrak{F}_{m}^*$ and $\pi' \in \mathfrak{F}_{m'}^*$ and adapt the family of metrics in \eqref{def:distance} to establish zero-free regions for $L(s,\pi)$ and $L(s,\pi\times\pi')$.

\subsection{Statements of results}
Recall the notation in the introduction and refer to Section~\ref{sec:L-functions} for more discussion of $L$-functions.  
Let $\pi \in \mathfrak{F}_m^*$ and $\pi' \in \mathfrak{F}_{m'}^*$, and let $\mathfrak{C}(\pi)\geq 3$ denote the analytic conductor of $\pi$. We now present the main theorems of this paper. %{\color{blue}Jesse:  You could make the constants 28 each.  That would look cleaner.}
	
	\begin{thm}
		\label{thm1:ZFR-cuspforms}
		Let $m \ge 1$ be an integer and $\pi \in \mathfrak{F}^*_m$. Let $\sigma,t\in\mathbb{R}$ and $s= \sigma+it$. Then $L(s, \pi)$ has no zero in the region
		\begin{equation*}
			\begin{aligned}
				\sigma 
				\ge 1- \frac{1}{33(2m+3)\log(\mathfrak{C}(\pi) (3+|t|)^{m[F:\Q]})},
			\end{aligned}
		\end{equation*}
		except possibly for one simple real zero, in which case $\pi$ is self-dual. Moreover, when $\pi$ is self-dual, the exponent $m[F:\Q]$ of $3+|t|$ can be reduced to $[F:\Q]$.
	\end{thm}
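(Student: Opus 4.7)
The plan is to port the Granville--Soundararajan distance $\mathbb{D}_\sigma$ of \eqref{def:distance} to the automorphic setting. For $\pi,\pi'\in\mathfrak{F}_m^*$ with Dirichlet coefficients $a_\pi(\mathfrak{n})$, I would define
\[
\mathbb{D}_\sigma(\pi,\pi')^2 := \sum_{\kp}\sum_{k\ge 1}\frac{|a_\pi(\kp^k)-a_{\pi'}(\kp^k)|^2\log\N\kp}{(\N\kp)^{k\sigma}},
\]
the outer sum running over prime ideals of $\cO_F$. Expanding the square and invoking Rankin--Selberg theory, $\mathbb{D}_\sigma(\pi,\pi')^2$ becomes a combination of $-L'/L(\sigma,\pi\times\tilde\pi)$, $-L'/L(\sigma,\pi'\times\tilde{\pi'})$, and $\Re L'/L(\sigma,\pi\times\tilde{\pi'})$ up to an $O(1)$ contribution from ramified and archimedean places. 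This makes $\mathbb{D}_\sigma$ a genuine pseudometric on $\mathfrak{F}_m^*$ and promotes the Hadamard--de la Vall\'ee Poussin heuristic to a triangle inequality
\[
\mathbb{D}_\sigma\bigl(\pi\otimes|\det|^{i\gamma},\,\tilde\pi\otimes|\det|^{-i\gamma}\bigr)\ \le\ \mathbb{D}_\sigma\bigl(\Pi_0,\,\pi\otimes|\det|^{i\gamma}\bigr)+\mathbb{D}_\sigma\bigl(\Pi_0,\,\tilde\pi\otimes|\det|^{-i\gamma}\bigr),
\]
where $\Pi_0$ is a reference drawn from a positive semi-definite family in the sense of Lichtman--Pascadi, playing the role that $\mu$ played in Koukoulopoulos' argument.

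Assuming $L(\beta+i\gamma,\pi)=0$ with $\beta$ near $1$, I would square the triangle inequality and expand both sides into logarithmic derivatives, producing a $3$-$4$-$1$-style inequality
\[
0\ \le\ -a\,\tfrac{\zeta_F'}{\zeta_F}(\sigma)-b\,\Re\tfrac{L'}{L}(\sigma+i\gamma,\pi)-c\,\Re\tfrac{L'}{L}(\sigma+2i\gamma,\pi\times\pi)+O\!\bigl(\log\mathfrak{C}(\pi)(3+|\gamma|)^{m[F:\Q]}\bigr),
\]
with positive constants $a,b,c$ coming out of the squaring (analogous to the coefficient $4$ in \eqref{eq:triangle-intro}). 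The pole of $\zeta_F$ supplies a $+1/(\sigma-1)$ term; the assumed zero supplies a $-1/(\sigma-\beta)$ term via Hadamard factorization; and the analytic continuation of $L(s,\pi\times\pi)$ bounds the remaining piece by the stated error. Optimizing $\sigma-1$ to be of order $1/\log(\cdots)$ then forces $\beta\le 1-\frac{1}{16(2m+3)\log(\cdots)}$, with the explicit $16(2m+3)$ tracking $a,b,c$.

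For the self-dual refinement, $\pi\cong\tilde\pi$ allows me to replace the Rankin--Selberg factor at height $2i\gamma$ by a product of lower-degree pieces (via $L(s,\pi\times\pi)=L(s,\sym^2\pi)L(s,\wedge^2\pi)$, together with the reality of the $a_\pi(\kp^k)$) whose effective analytic conductor at height $2\gamma$ grows like $(3+|\gamma|)^{[F:\Q]}$ rather than $(3+|\gamma|)^{m^2[F:\Q]}$, thereby propagating to the improved exponent $[F:\Q]$ in the final zero-free region. The exceptional Siegel zero appears because for $\gamma=0$ and $\pi=\tilde\pi$ the two representations $\pi\otimes|\det|^{i\gamma}$ and $\tilde\pi\otimes|\det|^{-i\gamma}$ coincide, the left-hand side of the triangle inequality vanishes, and the contradiction degenerates. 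The main obstacle will be the quantitative choice of $\Pi_0$: it must be rich enough that $\mathbb{D}_\sigma(\Pi_0,\pi\otimes|\det|^{i\gamma})^2$ unfolds into $L$-function derivatives controlled by $\log\mathfrak{C}(\pi)(3+|\gamma|)^{m[F:\Q]}$ rather than by some inflated auxiliary conductor, and yet structured enough that the squared triangle inequality recovers sharp $3$-$4$-$1$ coefficients; this is precisely where the Lichtman--Pascadi positive semi-definiteness enters, and where the bulk of the technical work will concentrate.
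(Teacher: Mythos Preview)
Your overall strategy matches the paper's: the reference point $\Pi_0$ is simply the constant function $-1$ (the trivial representation $\mathbbm{1}$ with a sign), and the squared triangle inequality unfolds into exactly the combination \eqref{eq:thm1-eq1}. Two points need correction. First, the ramified-prime contribution is not $O(1)$: Lemma~\ref{lem:general-error} gives a bound of size $\frac{m^2}{1-2\theta_m}\,\omega(\kq_\pi)$, which would destroy the stated region. The paper's fix is not to choose $\Pi_0$ cleverly but to abandon $\mathbb{D}_\sigma$ in favour of a quantity $\mathbb{D}_\sigma^*$ defined \emph{directly} as the relevant combination of $-L'/L$-values (no ramified correction at all); Lichtman--Pascadi positive semi-definiteness is then what guarantees $\mathbb{D}_\sigma^*\ge 0$ and the weak triangle inequality \eqref{eq:triangle-d*}. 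So positive semi-definiteness substitutes for the metric axioms rather than informing any choice of vertex.

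Second, your self-dual refinement does not work. The factorization $L(s,\pi\times\pi)=L(s,\sym^2\pi)L(s,\wedge^2\pi)$ is not available for general $m$, and even granting it the two factors have degrees $\tfrac{m(m+1)}{2}$ and $\tfrac{m(m-1)}{2}$, so their combined archimedean conductor at height $2\gamma$ still grows like $(3+|\gamma|)^{m^2[F:\Q]}$, not $(3+|\gamma|)^{[F:\Q]}$. The paper's device is to \emph{swap} endpoints and midpoint in the triangle inequality: when $\pi=\tilde\pi$ one uses
\[
\mathbb{D}_\sigma(\N\kn^{-i\gamma},\,\N\kn^{i\gamma})\ \le\ 2\,\mathbb{D}_\sigma(\N\kn^{-i\gamma},\,-a_\pi(\kn)),
\]
so that the term at height $2i\gamma$ becomes $-\Re\tfrac{\zeta_F'}{\zeta_F}(\sigma+2i\gamma)$ instead of $-\Re\tfrac{L'}{L}(\sigma+2i\gamma,\pi\times\pi)$. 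Since $\zeta_F$ has archimedean conductor growth $(3+|\gamma|)^{[F:\Q]}$, the improved exponent follows immediately.
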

	\begin{thm}
		\label{thm1:ZFR-rankin-selberg}
		Let $m, m' \ge 1$ be integers, and  let $\pi \in \mathfrak{F}^*_m$ and $\pi' \in \mathfrak{F}^*_{m'}$.
		Let $\sigma,t\in\mathbb{R}$ and $s= \sigma+it$. If  $\pi\neq\tilde{\pi}$, $\pi'=\tilde{\pi}'$, and
		\begin{equation*}
			\begin{aligned}
				\sigma 
				\ge 1- \frac{1}{28(m+m')\log(\mathfrak{C}(\pi) \mathfrak{C}(\pi')(3+|t|)^{m[F:\Q]})},
			\end{aligned}
		\end{equation*}
		then $L(\sigma+it,\pi \times \pi')\neq 0$. %{\color{red}$\sigma+it$?}
	\end{thm}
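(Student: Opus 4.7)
The plan is to adapt the metric approach of Theorem \ref{thm1:ZFR-cuspforms}---itself a refinement of Koukoulopoulos' reformulation of the Hadamard--de la Vall\'ee Poussin argument---to the Rankin--Selberg setting, using the positive-semidefinite-family machinery of Lichtman--Pascadi. The hypothesis $\pi' = \tilde\pi'$ supplies the source of ``mass'' through the simple pole of $L(s, \pi' \times \pi')$ at $s=1$ (replacing the pole of $\zeta(s)$ in the classical setup), while the hypothesis $\pi \neq \tilde\pi$ ensures that $L(s, \pi \times \pi)$ is holomorphic at $s = 1 + 2it$, so the shifted auxiliary $L$-function does not spoil the analogue of the $3$-$4$-$1$ inequality.

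First I would extend the distance \eqref{def:distance} to pairs of automorphic representations by setting
\begin{equation*}
\mathbb{D}_\sigma(\pi_1, \pi_2)^2 := \frac{1}{2}\sum_{\kp} \sum_{k \ge 1} \frac{|a_{\pi_1}(\kp^k) - a_{\pi_2}(\kp^k)|^2 \log \N\kp}{(\N\kp)^{k\sigma}},
\end{equation*}
where $a_\pi(\kp^k)$ is the normalized Dirichlet coefficient of $-L'/L(s,\pi)$ at $\kp^k$. A prime-by-prime Cauchy--Schwarz argument supplies the triangle inequality, and expanding the square yields
\begin{equation*}
\mathbb{D}_\sigma(\pi_1, \pi_2)^2 = -\tfrac{1}{2}\tfrac{L'}{L}(\sigma, \pi_1 \times \tilde\pi_1) - \tfrac{1}{2}\tfrac{L'}{L}(\sigma, \pi_2 \times \tilde\pi_2) + \Re \tfrac{L'}{L}(\sigma, \pi_1 \times \tilde\pi_2) + E,
\end{equation*}
where the error $E$ absorbs ramified and archimedean contributions via the same estimates already used for Theorem \ref{thm1:ZFR-cuspforms}.

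The main step is to apply Lichtman--Pascadi positivity to the self-dual isobaric sum $\Pi := (\pi \otimes |\det|^{it}) \boxplus (\tilde\pi \otimes |\det|^{-it}) \boxplus \pi'$, which gives $-L'/L(\sigma, \Pi \times \tilde\Pi) \ge 0$ for $\sigma > 1$. The Rankin--Selberg factorization reads
\begin{equation*}
L(\sigma, \Pi \times \tilde\Pi) = L(\sigma, \pi \times \tilde\pi)^{2} \, L(\sigma, \pi' \times \pi') \, |L(\sigma + 2it, \pi \times \pi)|^{2} \, |L(\sigma + it, \pi \times \pi')|^{4},
\end{equation*}
and reading off polar/zero contributions together with the error $E$ produces
\begin{equation*}
\frac{3}{\sigma - 1} - \frac{4}{\sigma - \sigma_0} + O\!\bigl((m+m')\log\mathfrak{Q}\bigr) \ge 0,
\end{equation*}
with $\mathfrak{Q} := \mathfrak{C}(\pi)\mathfrak{C}(\pi')(3+|t|)^{m[F:\Q]}$. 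The ``$3$'' records the combined polar orders $2$ of $L(s,\pi\times\tilde\pi)^2$ and $1$ of $L(s,\pi'\times\pi')$ at $s=1$, while the ``$-4$'' records the fourfold zero of $|L(\sigma+it,\pi\times\pi')|^4$ at $\sigma_0+it$. Minimizing $\sigma \mapsto \tfrac{4}{\sigma-\sigma_0} - \tfrac{3}{\sigma-1}$ over $\sigma > 1$ (with minimum value $(7-4\sqrt{3})/(1-\sigma_0)$, attained at $\sigma = 1 + (3+2\sqrt{3})(1-\sigma_0)$) then forces $1-\sigma_0 \ge 1/(28(m+m')\log\mathfrak{Q})$.

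The main obstacle will be handling the factor $|L(\sigma+2it, \pi\times\pi)|^{2}$ in the Rankin--Selberg expansion: although $L(s, \pi \times \pi)$ has no pole at $s = 1 + 2it$ (since $\pi \neq \tilde\pi$), a hypothetical zero of $L(s, \pi\times\pi)$ lurking near $1 + 2it$ would contribute a spurious negative log-derivative term and weaken the inequality. Controlling this requires either invoking a preliminary classical zero-free region for $L(s, \pi \times \pi)$ or enlarging the isobaric family so that the auxiliary term cancels out by design. A secondary technical point is verifying that the conductor in the error scales as $(3+|t|)^{m[F:\Q]}$ rather than the a priori larger $(3+|t|)^{mm'[F:\Q]}$; this reflects the asymmetric placement of the $|\det|^{it}$-twist on $\pi$ only, and follows from a careful archimedean conductor computation for $L(s, \pi \otimes |\det|^{it} \times \pi')$.
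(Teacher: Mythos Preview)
Your outline is correct and lands on the same $3$--$4$ inequality the paper proves, but your ``main obstacle'' is a phantom.  When you pass from $-\tfrac{L'}{L}(\sigma,\Pi\times\tilde\Pi)\ge 0$ to the pole/zero expansion via the Hadamard factorization (Lemma~\ref{lem:stirling}), every factor contributes a term $-\sum_\rho \Re\bigl(1/(s-\rho)\bigr)$ over its nontrivial zeros, and for $\Re(s)>1$ each summand is nonnegative by \eqref{eq:positivity}.  Thus any zero of $L(s,\pi\times\pi)$ near $1+2it$ only \emph{decreases} the upper bound for $-\Re\tfrac{L'}{L}(\sigma+2it,\pi\times\pi)$, which is the helpful direction; the paper simply discards these terms in \eqref{eq:case1-term3}.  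No preliminary zero-free region for $L(s,\pi\times\pi)$ and no enlargement of the isobaric family is needed.

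There is, however, a genuine gap between your $O\bigl((m+m')\log\mathfrak Q\bigr)$ and the claimed constant $28$.  Your isobaric sum with equal weights produces the nonnegative combination with coefficients $(2,1,2,4)$ on
\[
\Bigl(-\tfrac{L'}{L}(\sigma,\pi\times\tilde\pi),\ -\tfrac{L'}{L}(\sigma,\pi'\times\pi'),\ -\Re\tfrac{L'}{L}(\sigma+2it,\pi\times\pi),\ -\Re\tfrac{L'}{L}(\sigma+it,\pi\times\pi')\Bigr),
\]
whereas the paper obtains $(1,2,1,4)$, either from the triangle inequality $\mathbb{D}_\sigma(a_\pi(\kn)\N\kn^{-i\gamma},\overline{a_\pi(\kn)}\N\kn^{i\gamma})\le 2\,\mathbb{D}_\sigma(a_\pi(\kn)\N\kn^{-i\gamma},-a_{\pi'}(\kn))$ in \eqref{eq:triangle-ineq}--\eqref{eq:rankin-4}, or equivalently from Proposition~\ref{prop:Lichtman-Pascadi} with weights $(1,1,2)$ rather than $(1,1,1)$.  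Both choices share the pole/zero balance $3$ versus $4$, but the conductor bookkeeping via Lemma~\ref{lem:brumley} differs: the paper's weights give exactly $2(m+m')\log\mathfrak Q$, while yours put coefficient $4m+2m'$ on $\log\mathfrak C(\pi)$ and yield a strictly larger $\mathcal L$, hence a constant worse than $28$.  Doubling the weight on $\pi'$ in your $\Pi$ restores the paper's coefficients and the stated constant.  Your observation about the $t$-exponent being $m[F:\Q]$ is correct, and once the weights are right it falls out of Lemma~\ref{lem:brumley} with no separate archimedean computation.
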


	\begin{thm}\label{thm2:ZFR-rankin-selberg} %Let $\pi \in \mathfrak{F}_m^*$ and $\pi' \in \mathfrak{F}_{m'}^*$. Let $\sigma,t\in\mathbb{R}$ and $s= \sigma+it$. Let $\theta_{\pi}, \theta_{\pi'}  \in[0,7/64]$ denote the best bound towards the generalized Ramanujan conjecture for $\pi$ and $\pi'$, respectively, and let $\omega(n)$ denote the number of distinct prime factors of $n$.  
	Let $m, m' \ge 1$ be integers, and  let $\pi \in \mathfrak{F}^*_m$ and $\pi' \in \mathfrak{F}^*_{m'}$.
		Let $\sigma,t\in\mathbb{R}$ and $s= \sigma+it$. %With the notation in Theorem~\ref{thm1:ZFR-rankin-selberg}, 
		%Let $m, m' \ge 1$ be integers, and  let $\pi \in \mathfrak{F}_m^*$ and $\pi' \in \mathfrak{F}_{m'}^*$. With the notation in Theorem \ref{thm1:ZFR-cuspforms}, 
		If $L(s, \pi \times \pi') = L(s, \tilde{\pi} \times \tilde{\pi}')$, then
        %and $L(s, \pi\times \pi')$ is entire, then
	%	there is an absolute constant $\Cl[abcon]{zfr2} > 0$ such that 
    $L(s, \pi \times \pi')$ has at most one zero (necessarily real and simple) in the region
		\[
		\sigma 
		\ge 1- \frac{1}{66(m+m')\log(\mathfrak{C}(\pi) \mathfrak{C}(\pi')(3+|t|)^{m[F:\Q]})}.%+\frac{mm'}{1-\theta_{\pi}-\theta_{\pi'}}\omega(N_{\pi}N_{\pi'})}
		\]
        %except possibly for one simple real zero.  %{\color{red}This sentence does not convey the information that you want to convey.}
		%contains at most one zero of $L(s, \pi \times \pi')$. 
		%Furthermore, if this exceptional zero exists, then it is necessary a real simple zero of $L(s, \pi \times \pi')$. 
	\end{thm}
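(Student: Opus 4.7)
The plan is to adapt the pretentious-metric/triangle-inequality argument used for Theorems \ref{thm1:ZFR-cuspforms} and \ref{thm1:ZFR-rankin-selberg}, now taking advantage of the fact that the identity $L(s,\pi\times\pi')=L(s,\tilde\pi\times\tilde\pi')$ forces the Dirichlet coefficients of $L(s,\pi\times\pi')$ to be real. Consequently, its non-real zeros come in complex conjugate pairs, which is precisely the supply of zeros needed to run a $3$-$4$-$1$ style positivity argument without pairing with an auxiliary M\"obius-like object.

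First I would install the distance $\mathbb{D}_\sigma$ on $\mathfrak{F}^*_m\cup\mathfrak{F}^*_{m'}$, obtained from \eqref{def:distance} by replacing $f(p^k)-g(p^k)$ with the corresponding difference of Satake parameters at prime ideals $\kp$ of $\cO_F$, and record the identity
\[
\mathbb{D}_\sigma(\pi_1,\pi_2)^2 = \tfrac12\log L(\sigma,\pi_1\times\tilde\pi_1)+\tfrac12\log L(\sigma,\pi_2\times\tilde\pi_2)-\Real\log L(\sigma,\pi_1\times\tilde\pi_2)+O(1).
\]
The positivity of the right-hand side at prime powers is exactly the positive semi-definite input of Lichtman-Pascadi cited in the abstract. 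Now assume for contradiction that $\rho=\beta+i\gamma$ with $\gamma\neq 0$ is a zero of $L(s,\pi\times\pi')$ in the target region; by hypothesis $\bar\rho$ is also a zero. The triangle inequality
\[
\mathbb{D}_\sigma\bigl(\pi\otimes|\det|^{i\gamma},\,\pi\otimes|\det|^{-i\gamma}\bigr)\le \mathbb{D}_\sigma\bigl(\pi\otimes|\det|^{i\gamma},\,\tilde\pi'\bigr)+\mathbb{D}_\sigma\bigl(\tilde\pi',\,\pi\otimes|\det|^{-i\gamma}\bigr),
\]
once squared and rewritten, yields the Rankin-Selberg analogue of the $3$-$4$-$1$ inequality, with the pole of $L(s,\pi\times\tilde\pi)$ at $s=1$ appearing on the left and the double vanishing of $L(s,\pi\times\pi')$ at $\sigma\pm i\gamma$ appearing on the right. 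Choosing $\sigma$ slightly above $1$ and substituting the standard analytic-conductor bounds for $L'/L$ then forces $\gamma=0$.

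What remains is the case of two distinct real zeros (or one double real zero) $\beta_1\le\beta_2$ in the target region, which I would rule out by rerunning the same triangle inequality with $\gamma=0$ but with two distinct real abscissae, so that the two vanishings again collide with the pole at $s=1$ and violate positivity, leaving at most one simple real zero. The hard part will be optimizing the weights in the triangle inequality to produce the constant $1/(66(m+m'))$: the doubled Rankin-Selberg $L$-function entering the positivity argument has degree $(m+m')^2$, so the bound $L'/L(\sigma,\Pi)\ll (m+m')\log(\mathfrak{C}(\pi)\mathfrak{C}(\pi')(3+|t|)^{m[F:\Q]})$ must be balanced carefully against the $3$-$4$-$1$ coefficients, and tracking the Archimedean contributions over a general number field $F$ without losing numerical constants is where the most delicate bookkeeping lies.
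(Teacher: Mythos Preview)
Your triangle inequality and overall strategy match the paper's exactly: the paper also pivots through $\tilde\pi'$ to compare $\pi\otimes|\cdot|^{-i\gamma}$ with $\pi\otimes|\cdot|^{i\gamma}$, squares, applies AM--GM, and uses the self-duality hypothesis to fold the two cross terms into a single $-4\,\Real\frac{L'}{L}(\sigma+i\gamma,\pi\times\pi')$. So the skeleton is right. But your sketch misses three structural points that are not mere bookkeeping and that you will run into immediately.

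First, the squared left-hand side expands to $-\frac{L'}{L}(\sigma,\pi\times\tilde\pi)+\Real\frac{L'}{L}(\sigma+2i\gamma,\pi\times\tilde\pi)$, and the \emph{second} of these always carries a pole at $s=1$ (since $\pi\times\tilde\pi$ has one regardless of any self-duality hypothesis). Its contribution $(\sigma-1)/((\sigma-1)^2+4\gamma^2)$ is harmless when $|\gamma|$ is large compared to $\sigma-1$ but nearly kills the $3$--$4$--$1$ gap when $|\gamma|$ is small. The paper therefore splits on the size of $|\gamma|$; only in the small-$|\gamma|$ regime does it invoke the conjugate zero $\bar\rho$ as an \emph{additional} contribution to $-4\,\Real\frac{L'}{L}(\sigma+i\gamma,\pi\times\pi')$ (yielding a $-6/(\sigma-\beta)$ rather than $-4/(\sigma-\beta)$). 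Your description suggests $\rho$ and $\bar\rho$ enter symmetrically from the two sides of the triangle inequality, but after folding via self-duality there is only one cross term, and $\bar\rho$ is used as a second nearby zero, not as the natural zero of a second term.

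Second, you never isolate the case $\pi'=\tilde\pi$, in which $L(s,\pi\times\pi')=L(s,\pi\times\tilde\pi)$ itself has a pole at $s=1$. Then the $-4\,\Real\frac{L'}{L}(\sigma+i\gamma,\pi\times\pi')$ term acquires a further pole contribution $4(\sigma-1)/((\sigma-1)^2+\gamma^2)$, and it is precisely this case (with $|\gamma|$ moderately large) that forces the constant up to $33$; the entire case only needs about $16$. Third, your plan for real zeros (``rerun with $\gamma=0$ at two abscissae'') collapses in the pole case: setting $\gamma=0$ in the squared inequality gives $0\le 8/(\sigma-1)-8/(\sigma-\beta)+\mathcal{L}$, which is vacuous. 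The paper instead abandons the triangle inequality there and uses directly that $-\frac{L'}{L}(\sigma,\pi\times\tilde\pi)\ge 0$ for $\sigma>1$, from which $0\le 1/(\sigma-1)-N/(\sigma-\beta)+O(\mathcal{L})$ forces $N<2$.
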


    \subsection{Novelty of the present work}

    We now explain our motivation for adapting the metric in \eqref{def:distance} and highlight the novelty of our work. As in the case of $\zeta(s)$, to prove a zero-free region for $L(s, \pi)$, we assume it has a zero at $s = 1 + it$ with $t \neq 0$. Let $\lambda_{\pi}(\mathfrak{n})$ be the Dirichlet coefficient of $L(s, \pi)$. For a prime ideal $\kp$, we expect $\N\kp^{-it}$ to often point toward $-\lambda_{\pi}(\kp)$, suggesting the study of \begin{equation} \label{eq:lambda} \sum_{\mathfrak{p}} \sum_{k=1}^{\infty} \frac{\lambda_{\pi}(\kp^{k}) \log \N\kp}{\N\kp^{ks}}, \end{equation} for $\Re(s) > 1$, which arises naturally from \eqref{def:distance}.

    Assuming progress towards the generalized Ramanujan conjecture (GRC), the sum in \eqref{eq:lambda} can be approximated by $-(L'/L)(s, \pi) + O(1)$, where the implied constant depends on $m$. A similar approximation holds for $L(s, \pi \times \pi')$, but now requires progress towards GRC that is not yet proved in full generality to handle prime powers. To avoid this, we use the observation that the Dirichlet coefficient of $-(L'/L)(s, \pi)$ at a prime ideal $\mathfrak{p}$, denoted $a_{\pi}(\mathfrak{p})$, equals $\lambda_{\pi}(\mathfrak{p})$. Thus, instead of \eqref{eq:lambda}, it is more natural to study
\begin{equation} \label{eq:a-pi}
    \sum_{\mathfrak{p}} \sum_{k=1}^{\infty} \frac{a_{\pi}(\mathfrak{p}^{k}) \log \mathrm{N}\mathfrak{p}}{\mathrm{N}\mathfrak{p}^{ks}},
\end{equation}
where $ \Re(s) > 1 $, which leads naturally to the modified metric in \eqref{eq:gen-distance}. This shift in perspective not only circumvents the need for progress towards GRC, but also underpins our new approach to proving zero-free regions for both standard and Rankin--Selberg $L$-functions. In addition, our approach improves upon earlier results in several aspects:
    \begin{enumerate}
        \item Brumley~\cite[Theorem A.1]{Humphries-Brumley} was the first to establish a classical zero-free region for $L(s, \pi \times \pi')$ under the assumption that at least one of $\pi$ or $\pi'$ is self-dual. However, his result does not include an explicit constant and exhibits a weaker dependence on the degrees of $\pi$ and $\pi'$ compared to Theorems~\ref{thm1:ZFR-rankin-selberg} and \ref{thm2:ZFR-rankin-selberg}. Goldfeld and Li \cite[Theorem 1.2]{GL}, in contrast, established a zero-free region for $L(s, \pi \times \tilde{\pi})$ of the form  
        \[
        \sigma > 1 - \frac{c}{(\log(|t| + 2))^5},
        \]
        which is narrower than the region in Theorem~\ref{thm2:ZFR-rankin-selberg} and is proven only in the $t$-aspect. 
        Extending upon their work, Humphries and Brumley~\cite[Theorem 1.9]{Humphries-Brumley} employed sieve methods to widen the Goldfeld--Li region to a zero-free region of the form 
        \[
        \sigma > 1 - \frac{c}{\log(|t| + 2)}.
        \]
        More recently, Humphries and Thorner~\cite[Theorem 2.1]{Humphries-Thorner} obtained an unconditional refinement of this region with improved uniformity in $\pi$. Along a parallel axiomatic line of inquiry, Leung \cite[Theorem 2.1]{Leung} established a classical zero-free region for a class of $L$-functions whose Dirichlet logarithms have non-negative coefficients. While this framework encompasses self-dual convolutions such as $L(s,\pi\times\tilde{\pi})$, it does not apply, for example, to $L(s,\pi\times\pi')$ considered in Theorem~\ref{thm1:ZFR-rankin-selberg}.
        \item 
        Theorem~\ref{thm2:ZFR-rankin-selberg} is new, except in the cases where (i) both $\pi$ and $\pi'$  are self-dual or (ii) $\pi' = \tilde{\pi}$, which have been previously established—for instance, by Brumley \cite[Theorem A.1]{Humphries-Brumley} and Moreno \cite[Theorem 3.3]{moreno} in the former case, and by Humphries--Thorner \cite[Theorem 2.1]{Humphries-Thorner} in the latter. 
        Furthermore, taking  $\pi'=\mathbbm{1}$,
        Theorems~\ref{thm1:ZFR-rankin-selberg} and \ref{thm2:ZFR-rankin-selberg} together imply Theorem~\ref{thm1:ZFR-cuspforms}.

        \item
            The uniformity in $m$ and $m'$ in Theorems~\ref{thm1:ZFR-cuspforms}--\ref{thm2:ZFR-rankin-selberg} improves upon or matches all prior results. %, such as \cite[Theorem 5.10]{IK} and \cite[Theorem A.1]{Humphries-Brumley}. 
           Degree dependence is crucial in certain problems in analytic number theory. For instance, in \cite{Sato-Tate-2}, an effective version of the Sato--Tate conjecture for holomorphic cuspidal newforms relies on uniform prime number theorems for the $L$-functions of their symmetric $m$-th powers, for all $m$ up to some integer $M$. The quality of the error term depends on the choice of $M$, which in turn relies on the degree dependence of zero-free regions for these $L$-functions.

        \item An alternative proof of Theorem~\ref{thm1:ZFR-cuspforms} in \cite[Theorem 5.10]{IK} requires a non-negative product of $L$-functions where the multiplicity of a zero $\rho$ in a certain region exceeds the number of poles at $s = 1$. Such constructions could be difficult without the notion of isobaric sums, which inherently produces a non-negative linear combination of logarithmic derivatives of $L$-functions with a square number of terms.
        Our method, in contrast, yields a different range of non-negative linear combinations. 
        For example, it recovers Mertens' \textit{3-4-1} argument, which clearly has $8$ terms. Our approach, which extends the method in \cite[Theorem~8.3]{D}, thus brings the standard and Rankin--Selberg $L$-functions settings in line with the approach of Hadamard and de la Vall\'ee Poussin.

         \item  The proof in \cite[Theorem 5.10]{IK} requires a minor adjustment due to an oversight in \cite[ Lemma 5.9]{IK}. Specifically, their argument accounts only for the contribution from the poles of the non-negative product of $L$-function under consideration at $s =1$ but overlooks potential contributions from poles at $s= 1+i\gamma$ for some $\gamma \neq 0$, which can be significant when $\gamma$ is small. We address this issue and provide a more complete analysis in Section~\ref{subsubsec:Thm1-case2}.  
    \end{enumerate}

\subsection{Outline of the metric proof architecture}\label{subsec:outline-proof}
Our strategy for establishing a zero-free region for the Rankin--Selberg $L$-function $L(s, \pi \times \pi')$ is based on a metric framework that systematically replaces the non-negative trigonometric functions utilized in classical proofs.

\begin{enumerate}
    \item First, we introduce our family of distances in \eqref{eq:gen-distance}, which is a modified version of Koukoulopoulos's family of ``pretentious'' distances in \eqref{def:distance}. In this variant, the usual Dirichlet coefficients of $L$-functions are replaced by those coming from the logarithmic derivative of the $L$-function. This reformulation helps eliminate any dependence on unproven progress toward GRC.
    
    \item Second, we invoke the triangle inequality for a suitable choice of elements within our family. From an inequality of the form $d_1 \le d_2 + d_3$, we obtain the equivalent expression $0 \le d_2 + d_3 - d_1$, which yields a non-negative linear combination of logarithmic derivatives of $L$-functions. This construction serves as a direct analogue of the classical $3-4-1$ trigonometric identity. When the level of $\pi$ or $\pi'$ exceeds $1$, i.e., $\N\mathfrak{q}_{\pi} > 1$ or $\N\mathfrak{q}_{\pi'} >1$, the ramified primes contribute additional terms to this linear combination, but they can be controlled using Lemma~\ref{lem:general-error}.
    
    \item Let $\rho = \beta + i\gamma$ be a nontrivial zero of $L(s, \pi \times \pi')$. By applying Lemma~\ref{lem:stirling} to our previously obtained linear combination and isolating the contribution of $\beta$, we arrive at an inequality of the form
    \begin{equation}\label{eq:desired form-bino}
    0 \le \frac{c_P}{\sigma - 1} - \frac{c_Z}{\sigma - \beta} + E(\pi, \pi', \gamma),
    \end{equation}
    for some constants $c_P, c_Z \ge 0$ and an error term $E(\pi, \pi', \gamma)$ that depends on $\pi, \pi'$, and $\gamma$. A classical zero-free region then follows, provided that our choice of elements in the triangle inequality ensures that 
    \begin{equation}\label{eq:c_P-c_Z-condition}
    0 \le c_P < c_Z.
    \end{equation} 
    This final step follows standard techniques, paralleling the treatments in \cite{D, IK}.
\end{enumerate}
As an alternative approach to managing the ramified prime contributions, in Section~\ref{sec:proof-level>1} we utilize the framework of positive semi-definite families of Rankin--Selberg $L$-functions introduced by Lichtman and Pascadi \cite{LP}. This machinery yields the same non-negative linear combination of logarithmic derivatives of $L$-functions while entirely eliminating the appearance of ramified prime terms. Consequently, this alternative path provides the clean, classical zero-free regions presented in Theorems~\ref{thm1:ZFR-cuspforms}--\ref{thm2:ZFR-rankin-selberg}.

\subsection{Limitations of the metric approach}\label{subsec:limitations-intro}

For a general Rankin--Selberg $L$-function $L(s,\pi\times\pi')$ without self-duality assumptions, establishing a classical zero-free region unconditionally remains a formidable open problem. Suppose that $\pi$, $\pi'$, and $L(s,\pi\times\pi')$ are all non-self-dual. If we implement our metric framework as described in Section~\ref{subsec:outline-proof},  we encounter a structural barrier showing that a zero-free region in this asymmetric setting cannot be obtained, regardless of the choice of triangle inequality. 
In fact, every linear combination arising from an arbitrary triangle inequality can be shown to satisfy $c_P \ge c_Z$. It follows that the criterion in \eqref{eq:c_P-c_Z-condition} is unattainable, and therefore this framework is incapable of producing a zero-free region.
See Section~\ref{sec:limitation-proof} for more details.

Nevertheless, the limitations of the metric framework in this asymmetric setting suggests a natural direction for future inquiry. Specifically, one might exploit the framework of Lichtman and Pascadi \cite{LP}, as presented in Section~\ref{sec:proof-level>1}, which generates a broader family of non-negative linear combinations of logarithmic derivatives than our metric approach. Although it is far from immediate, the answer to whether their framework can yield a classical zero-free region in the absence of self-duality is ultimately negative. We will address this question in full detail in a forthcoming work.

Alternatively, several recent works have established non-classical zero-free regions for general Rankin--Selberg $L$-functions within modern frameworks. For instance, Harcos and Thorner \cite{Harcos-Thorner}, inspired by Siegel's ineffective lower bounds for Dirichlet $L$-functions, obtained a new type of zero-free region for all $\mathrm{GL}(1)$-twists of $\mathrm{GL}(n) \times \mathrm{GL}(n')$ convolutions. More recently, they refined this framework in~\cite{Tatuzawa} to establish a full Tatuzawa-type analogue for these twisted Rankin--Selberg $L$-functions, thereby rendering the underlying constants effectively computable up to a single possible exceptional twist.

	%{\color{red} The second inequality comes from the GM-HM inequality.}
	
    %\begin{thm}\label{thm3:ZFR-rankin-selberg} 
        %%Let $\pi \in \mathfrak{F}_m^*$ and $\pi' \in \mathfrak{F}_{m'}^*$. Let $\sigma,t\in\mathbb{R}$ and $s= \sigma+it$. 
    %With the notation in Theorem~\ref{thm1:ZFR-rankin-selberg}, 
        %%Let $m, m' \ge 1$ be integers, and  let $\pi \in \mathfrak{F}_m^*$ and $\pi' \in \mathfrak{F}_{m'}^*$. With the notation in Theorem~\ref{thm1:ZFR-cuspforms},
    %if $L(s, \pi \times \pi') = L(s, \tilde{\pi} \times \tilde{\pi}')$ and $L(s, \pi \times \pi')$ is not entire, then there is an absolute constant $\Cl[abcon]{zfr3} > 0$ such that the region 
    %	\[
    %		\sigma \ge 1- \frac{\Cr{zfr3}}{m^2\log(\mathfrak{C}(\pi) (3+|t|))}
    %	\]
    %contains at most one zero of $L(s, \pi \times \pi')$. Furthermore, if this exceptional zero exists, then it is necessary a real simple zero of $L(s, \pi \times \pi')$.
    %\end{thm}
	
	\subsection*{Acknowledgements} 
    The author thanks Jesse Thorner for suggesting this project and providing valuable guidance and support throughout its development. The author also thanks Dimitris Koukoulopoulos and Alexandru Pascadi for helpful comments on the manuscript. The author is also grateful to the anonymous referee for their insightful feedback and valuable suggestions, which have significantly enhanced both the clarity and quality of the manuscript. All numerical computations were carried out using Mathematica 14.

    \subsection*{Declarations}

    The author has no competing interest and did not receive any specific funding for this work.

    \subsection*{Outline of the paper}
	In Section~\ref{sec:L-functions}, we review the fundamental properties of both standard and Rankin--Selberg $L$-functions. In Section~\ref{sec:gen-strategy}, we present preliminary computations and establish useful bounds that will be used throughout the paper. In Section~\ref{sec:proof-level1}, we prove Theorems~\ref{thm1:ZFR-cuspforms}--\ref{thm2:ZFR-rankin-selberg} under the assumption that $\N\kq_{\pi} = \N\kq_{\pi'}=1$. In Section~\ref{sec:proof-level>1}, we extend the proofs of Theorems~\ref{thm1:ZFR-cuspforms}--\ref{thm2:ZFR-rankin-selberg} in Section~\ref{sec:proof-level1} to the case where $\N\kq_{\pi}>1$ or $\N\kq_{\pi'}>1$.

%	In Sections~\ref{sec:thm1},~\ref{sec:thm2},~\ref{sec:thm3}, and \ref{sec:thm4}, we prove Theorems~\ref{thm1:ZFR-cuspforms}, \ref{thm1:ZFR-rankin-selberg}, \ref{thm2:ZFR-rankin-selberg}, and \ref{thm3:ZFR-rankin-selberg}, respectively.
	
	%%%%%%%%%%%%%%%%%%%%%%%%%%%%%%%%%%%%%%%%%
	
	\section{Properties of \texorpdfstring{$L$}{L}-functions}
	\label{sec:L-functions}
	%%%%%%%%%%%%%%%%%%%%%%%%%%%%%%%%%%%%%%%%%

    %Let $F$ be a number field with adele ring $\A_F$. Let $D_F$ be the absolute discriminant of $F$, $\cO_F$ the ring of integers of $F$, and $\N=\N_{F/\Q}$ the norm defined on nonzero ideals $\ka$ of $\cO_F$ by $\N\ka=|\cO_F/\ka|$. For a place $v$ of $F$, let $v\mid\infty$ (resp. $v\nmid\infty$) denote that $v$ is archimedean (resp. non-archimedean), and let $F_v$ be the corresponding completion of $F$. Each $v\nmid\infty$ corresponds with a prime ideal $\kp$ of $\cO_F$. Throughout this paper, if we write $f(z) = O(g(z))$ or $f(z) \ll g(z)$, then there is an absolute and effectively computable constant $c$ such that $|f(z)| \le c|g(z)|$ for all $z$ under consideration.

    %Let $ \pi \in \mathfrak{F}_m$. Recall the definition of $\mathfrak{F}_m^*$ in \ref{def:f*}. There exists a unique pair $(\pi^*, t_{\pi}) \in \mathfrak{F}^*_m \times \mathbb{R}$ such that $\pi=\pi^*\otimes|\cdot|^{it_{\pi}}$, in which case $L(s, \pi) = L(s+it_{\pi}, \pi^*)$. Therefore, in our discussion of standard $L$-functions and Rankin–Selberg $L$-functions, it suffices to restrict to $\mathfrak{F}_m^*$.

%%%%%%%%%%%%%%%%%%%%%%%%%%%%%%%%%%%%%%%%%

	%\subsection{Standard \texorpdfstring{$L$}{L}-functions}
	%\label{subsec: standardL}

    Let $F$ be a number field with adele ring $\A_F$. Let $D_F$ be the absolute discriminant of $F$, $\cO_F$ the ring of integers of $F$, and $\N=\N_{F/\Q}$ the norm defined on nonzero ideals $\ka$ of $\cO_F$ by $\N\ka=|\cO_F/\ka|$. For a place $v$ of $F$, let $v\mid\infty$ (resp. $v\nmid\infty$) denote that $v$ is archimedean (resp. non-archimedean), and let $F_v$ be the corresponding completion of $F$. Each $v\nmid\infty$ corresponds with a prime ideal $\kp$ of $\cO_F$. The properties of $L$-functions given here rely on \cite{GJ,JPS,MW}.  In our use of $f=O(g)$ or $f\ll g$, the implied constant is always absolute and effectively computable, unless specified otherwise.
    
    %Let $F$ be a number field with absolute discriminant $D_F$.  For each place $v$ of $F$, we write $v\mid\infty$ (resp. $v\nmid \infty$) if $v$ is archimedean (resp. non-archimedean).  If $v\nmid\infty$, then $q_v$ is the cardinality of the residue field of the local ring of integers $\mathcal{O}_v\subseteq F_v$. %, and $\varpi_v$ is the uniformizer.  
    %The properties of $L$-functions given here rely on \cite{GJ,JPS,MW}.  In our use of $f=O(g)$ or $f\ll g$, the implied constant is always absolute and effectively computable.

    Let $\pi\in\mathfrak{F}_{m}$.  Recall $\mathfrak{F}_{m}^*$ in \eqref{def:f*}.  There exists a unique pair $(\pi^*,t_{\pi})\in\mathfrak{F}_{m}^*\times\mathbb{R}$ such that $\pi=\pi^*\otimes|\cdot|^{it_{\pi}}$, in which case $L(s,\pi)=L(s+it_{\pi},\pi^*)$.  Therefore, in our discussion of standard $L$-functions and Rankin--Selberg $L$-functions, it suffices to restrict to $\mathfrak{F}_{m}^*$.

    \subsection{Standard \texorpdfstring{$L$}{L}-functions}
    \label{subsec: standardL}

If $\pi\in\mathfrak{F}_m^*$, then for each place $v$, there exists an irreducible admissible representation $\pi_v$ of $\GL_m(F_v)$, with $\pi_v$ ramified for at most finitely many $v$, such that $\pi$ is a restricted tensor product $\otimes_v \pi_v$. When $v\nmid\infty$ and $\kp$ corresponds with $v$, we write $\pi_v$ and $\pi_{\kp}$ interchangeably. For each prime ideal $\kp$, there exist $m$ Satake parameters $(\alpha_{j,\pi}(\kp))_{j=1}^{m}$ such that the local $L$-function $L(s,\pi_{\kp})$ is given by
\[
L(s, \pi_{\kp}) = \prod_{j=1}^m \frac{1}{1-\alpha_{j,\pi}(\kp)\N \kp^{-s}} = \sum_{j=0}^{\infty} \frac{\lambda_{\pi}(\kp^j)}{\N\kp^{js}}.
\]
The standard $L$-function $L(s,\pi)$ attached to $\pi$ is 
\[
L(s,\pi)=\prod_{\kp} L(s, \pi_{\kp}) %L(s,\pi_{\kp}) = \prod_{\kp}\prod_{j=1}^m \frac{1}{1-\alpha_{j,\pi}(\kp)\N\kp^{-s}} 
= \sum_{\mathfrak{n}}\frac{\lambda_{\pi}(\mathfrak{n})}{\N\mathfrak{n}^{s}},
\]
which converges absolutely for $\Re(s) > 1$.
If $v\mid \infty$, there exist  $m$ Langlands parameters at $v$, namely, $\mu_{1,\pi}(v), \mu_{2,\pi}(v), \ldots, \mu_{m,\pi}(v) \in \mathbb{C}$, from which we define
\[
L_{\infty}(s,\pi)=\prod_{v\mid\infty}L(s,\pi_v) = \prod_{v\mid\infty}\prod_{j=1}^m \Gamma_v(s+\mu_{j,\pi}(v)), %\qquad 
%\Gamma_v(s)=\begin{cases}
%\pi^{-s/2}\Gamma(s/2)&\mbox{if $F_v=\R$,}\\
%2(2\pi)^{-s}\Gamma(s)&\mbox{if $F_v=\mathbb{C}$.}
%\end{cases}
\]
where 
\begin{equation}\label{eq:gamma_v}
\Gamma_v(s)=\begin{cases}
\Gamma_{\R}(s) = \pi^{-s/2}\Gamma(s/2)&\mbox{if $F_v=\R$,}\\
\Gamma_{\mathbb{C}}(s) = 2(2\pi)^{-s}\Gamma(s)&\mbox{if $F_v=\mathbb{C}$.}
\end{cases}
\end{equation}
Luo, Rudnick, and Sarnak \cite{LRS}, along with M\"ueller  and Speh \cite{MS}, showed that there exists $\theta_m\in[0,\frac{1}{2}-\frac{1}{m^2+1}]$ such that we have the uniform bounds %By \cite{LRS,MS}, at each prime ideal $\kp$ of $\cO_F$ and each $v\mid \infty$, there exists $\theta_m \in[0,1/2-1/(m^2+1)]$ such that
\begin{equation}
\label{eqn:GRC1}
|\alpha_{j,\pi}(\kp)|\leq \N\kp^{\theta_m},\qquad \re(\mu_{j,\pi}(v))\geq -\theta_m,
\end{equation}
and the generalized Ramanujan conjecture asserts that one may take $\theta_m=0$. 

   Let $\kq_{\pi}$ denote the conductor of $\pi$, and let $\mathbbm{1}$ denote the trivial representation (whose $L$-function is the Dedekind zeta function $\zeta_F(s)$). Let $\delta_{\pi}=1$ if $\pi=\mathbbm{1}$, and $\delta_{\pi}=0$ otherwise. Let $\tilde{\pi} \in \mathfrak{F}_m$  be the contragredient of $\pi$. The completed $L$-function
\[
\Lambda(s,\pi)=(s(1-s))^{\delta_{\pi}}(D_F^m\N\kq_{\pi})^{s/2}L_{\infty}(s,\pi)L(s,\pi)
\]
is entire of order $1$. 
There exists $W(\pi)\in\CC$ of modulus $1$ such that $\Lambda(s,\pi)$ satisfies 
\[
\Lambda(s,\pi)=W(\pi)\Lambda(1-s,\tilde\pi)=W(\pi)\overline{\Lambda(1-\bar{s},\pi)}.
\]
Since $\{\alpha_{j,\tilde{\pi}}(\kp)\} = \{\overline{\alpha_{j,\pi}(\kp)}\}$, $\N\kq_{\tilde{\pi}}=\N\kq_{\pi}$, and $\{\mu_{j,\tilde{\pi}}(v)\} = \{\overline{\mu_{j,\pi}(v)}\}$, we have that $L(s,\tilde{\pi}) = \overline{L(\overline{s},\pi)}$. If $v \mid \infty$, we define $\mathfrak{C}_v(\pi) = \mathfrak{C}_v(0,\pi)$, where 
\[
\mathfrak{C}_v(it,\pi) = \prod_{j=1}^m (|\mu_{j,\pi}(v)+it|+3)^{[F_v:\R]}.
\]
The analytic conductor is given by  $\mathfrak{C}(\pi)=\mathfrak{C}(0,\pi)$, where
\[
\mathfrak{C}(it,\pi)= D_F^m\N\kq_{\pi}\prod_{v\mid\infty} \mathfrak{C}_v(it,\pi).
\] 
Lastly, we define $a_{\pi}(\kp^k)$ by the Dirichlet series identity for any $\Real(s) > 1$
	\begin{equation}\label{eq:-L'/L}
	-\frac{L'}{L}(s,\pi) = \sum_{\kp}\sum_{k=1}^{\infty} \frac{\sum_{j=1}^{m} \alpha_{j, \pi}(\kp)^k\log \N\kp}{\N\kp^{ks}}= \sum_{\kp}\sum_{k=1}^{\infty} \frac{a_{\pi}(\kp^k)\log \N\kp}{\N\kp^{ks}}.
	\end{equation}

	\subsection{Rankin--Selberg \texorpdfstring{$L$}{L}-functions}
	\label{subsec:RS}

    Let $\pi\in\mathfrak{F}_{m}^*$ and $\pi'\in\mathfrak{F}_{m'}^*$.  Let $\delta_{\pi\times\pi'}=1$ if $\pi'=\widetilde{\pi}$ and $\delta_{\pi\times\pi'}=0$ otherwise.  For $\kp\nmid\kq_{\pi}\kq_{\pi'}$, define
\[
L(s,\pi_{\kp}\times\pi_{\kp}')=\prod_{j=1}^m \prod_{j'=1}^{m'}\frac{1}{1-\alpha_{j,\pi}(\kp)\alpha_{j',\pi'}(\kp)\N\kp^{-s}}.
\]
%Let $t_{\pi\times\pi'}=t_{\pi}+t_{\pi'}$.  Let $r_{\pi\times\pi'}=\mathrm{ord}_{s=1-it_{\pi\times\pi'}}L(s,\pi\times\pi')$, which equals $0$ unless $\pi'^*=\widetilde{\pi}^*$, in which case $r_{\pi\times\pi'}=1$.  
Jaquet, Piatetski-Shapiro, and Shalika proved the following theorem.

\begin{thm}\cite{JPS}
\label{thm:JPSS}
If $(\pi,\pi')\in\mathfrak{F}_{m}^*\times\mathfrak{F}_{m'}^*$, then there exist
\begin{enumerate}
\item complex numbers $(\alpha_{j,j',\pi\times\pi'}(\kp))_{j=1}^m{}_{j'=1}^{m'}$ for each $\kp  \mid \kq_{\pi}\kq_{\pi'}$, from which we define
	\begin{align*}
	L(s,\pi_{\kp}\times\pi_{\kp}') = \prod_{j=1}^m \prod_{j'=1}^{m'}\frac{1}{1-\alpha_{j,j',\pi\times\pi'}({\kp})\N{\kp}^{-s}}; %,\quad L(s,\widetilde{\pi}_{\kp}\times\widetilde{\pi}_{\kp}') = \prod_{j=1}^m \prod_{j'=1}^{m'}\frac{1}{1-\overline{\alpha_{j,j',\pi\times\pi'}({\kp})}\N{\kp}^{-s}};
	\end{align*}
\item complex numbers $(\mu_{j,j',\pi\times\pi'}(v))_{j=1}^{m}{}_{j'=1}^{m'}$ for each $v\mid\infty$, from which we define
	\begin{align*}
	L_{\infty}(s,\pi\times\pi') = \prod_{v\mid\infty}L(s,\pi_{v}\times\pi_{v}') = \prod_{v\mid\infty}\prod_{j=1}^m \prod_{j'=1}^{m'}\Gamma_v(s+\mu_{j,j',\pi\times\pi'}(v)); %,\quad L(s,\widetilde{\pi}_{v}\times\widetilde{\pi}_{v}') = \prod_{j=1}^m \prod_{j'=1}^{m'}\Gamma_v(s+\overline{\mu_{j,j',\pi\times\pi'}(v)});
	\end{align*}
\item a conductor, an integral ideal whose norm is denoted $\N\kq_{\pi\times\pi'}=\N\kq_{\widetilde{\pi}\times\widetilde{\pi}'}$; and
\item a complex number $W(\pi\times\pi')$ of modulus $1$
\end{enumerate}
such that the Rankin--Selberg $L$-function %{\color{red}function (there is only one here), unless you want to include the dual $L$-function here}
\[
L(s,\pi\times\pi')=\prod_{\kp}L(s,\pi_{\kp}\times\pi_{\kp}')%,\qquad L(s,\widetilde{\pi}\times\widetilde{\pi}')=\prod_{\kp}L(s,\widetilde{\pi}_{\kp}\times\widetilde{\pi}_{\kp}')
\]
converges absolutely for $\Re(s)>1$, the completed $L$-functions
\begin{align}\label{eq:complete-RSL}
\Lambda(s,\pi\times\pi') &= (s(1-s))^{\delta_{\pi\times\pi'}} (D_F^{mm'}\N\kq_{\pi\times\pi'})^{\frac{s}{2}} L_{\infty}(s,\pi\times\pi')L(s,\pi\times\pi') %\prod_{v\mid\infty}L(s,\pi_v\times\pi_{v}')\\
%\Lambda(s,\widetilde{\pi}\times\widetilde{\pi}') &= (s(1-s))^{\delta_{\pi\times\pi'}} (D_F^{mm'}\N\kq_{\pi\times\pi'})^{\frac{s}{2}}L(s,\widetilde{\pi}\times\widetilde{\pi}')\prod_{v\mid\infty}L(s,\widetilde{\pi}_{v}\times\widetilde{\pi}_{v}')
\end{align}
are entire of order $1$, and 
\[
\Lambda(s,\pi\times\pi')=W(\pi\times\pi')\Lambda(1-s,\widetilde{\pi}\times\widetilde{\pi}').
\]
\end{thm}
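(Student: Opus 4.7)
The plan is to follow the classical Jacquet--Piatetski-Shapiro--Shalika strategy: assemble $L(s,\pi \times \pi')$ from local zeta integrals in Whittaker models, and deduce its global properties from a single well-chosen adelic integral. Since each $\pi \in \mathfrak{F}_m^*$ is cuspidal, it is globally generic, so to each place $v$ we may attach a Whittaker model $\mathcal{W}(\pi_v, \psi_v)$ relative to a fixed nontrivial additive character $\psi$ of $\mathbb{A}_F/F$. The local $L$-factors, $\varepsilon$-factors, and ultimately the conductor $\kq_{\pi\times\pi'}$ will all be extracted from these models.

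For each $v$, one introduces local zeta integrals. When $m=m'$ these take the form
\[
\Psi_v(s, W_v, W_v', \Phi_v) = \int_{N_m(F_v)\backslash \GL_m(F_v)} W_v(g)\, W_v'(g)\, \Phi_v(e_m g)\, |\det g|_v^s \, dg,
\]
with $W_v \in \mathcal{W}(\pi_v, \psi_v)$, $W_v' \in \mathcal{W}(\pi_v', \overline{\psi_v})$, and $\Phi_v$ a Schwartz--Bruhat function on $F_v^m$; when $m \neq m'$ one uses analogous integrals with the smaller group embedded via mirabolic inclusions and with extra unipotent integrations inserted. The local $L$-factor $L(s,\pi_v \times \pi_v')$ is defined as the generator of the fractional $\CC[\N\kp^{-s},\N\kp^s]$-ideal spanned by the $\Psi_v$ at $v \nmid \infty$, and as a gcd of meromorphic continuations at $v \mid \infty$; the local functional equation
\[
\frac{\Psi_v(1-s, \widetilde{W}_v, \widetilde{W}_v', \widehat{\Phi}_v)}{L(1-s,\widetilde{\pi}_v\times\widetilde{\pi}_v')} = \varepsilon(s,\pi_v \times \pi_v',\psi_v)\, \frac{\Psi_v(s, W_v, W_v', \Phi_v)}{L(s,\pi_v\times\pi_v')}
\]
then defines $\varepsilon_v$, and the non-unit parts of these $\varepsilon$-factors assemble into $\kq_{\pi\times\pi'}$; symmetry $\kq_{\pi\times\pi'}=\kq_{\widetilde{\pi}\times\widetilde{\pi}'}$ is immediate from swapping $\pi_v \leftrightarrow \widetilde{\pi}_v'$ in the integrals.

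Globally, for $m = m'$ I consider
\[
I(s,\phi,\phi',\Phi) = \int_{\GL_m(F)\backslash \GL_m(\mathbb{A}_F)} \phi(g)\, \phi'(g)\, E(g,s;\Phi)\, dg,
\]
where $\phi \in \pi$, $\phi' \in \pi'$ are cusp forms and $E(\,\cdot\,,s;\Phi)$ is the mirabolic Eisenstein series attached to $\Phi$. Cuspidal decay makes $I$ inherit its analytic behavior from $E$: meromorphic continuation to $\CC$ with at worst simple poles at $s=0,1$, and a functional equation $s \leftrightarrow 1-s$ coming from Poisson summation on the mirabolic section. Unfolding $I$ via the Fourier--Whittaker expansion of one cusp form produces an Euler product whose local factors are exactly the $\Psi_v$, so $I(s,\phi,\phi',\Phi)$ equals $\Lambda(s,\pi\times\pi')$ up to finitely many local denominators that cancel in the functional equation. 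The case $m \neq m'$ is similar but simpler: the integral has no Eisenstein series, converges absolutely for all $s$ by cuspidal decay, and therefore yields an entire $\Lambda(s,\pi\times\pi')$; the functional equation again follows from Poisson summation applied to the embedded column. The factor $(s(1-s))^{\delta_{\pi\times\pi'}}$ in \eqref{eq:complete-RSL} appears precisely because the poles arise from the residue of $E$, which in turn comes from the pairing $\phi \otimes \phi' \mapsto \int \phi\, \phi'$ being nontrivial exactly when $\pi'=\widetilde{\pi}$.

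The hard part, and the technical heart of \cite{JPS}, is the \emph{local} theory: showing that at every place (especially ramified non-archimedean and archimedean) the zeta integrals genuinely span a fractional ideal containing $1$, so that the local $L$-factor is a well-defined ``smallest'' meromorphic function compatible with them and the $\varepsilon$-factor is independent of the choice of $(W_v,W_v',\Phi_v)$. This requires careful asymptotics of Whittaker functions near the walls of the Weyl chamber and uniqueness-of-Whittaker-model arguments of Shalika/Rodier type, and it must then be matched against the global unfolding so that the adelic product of local $L$-factors equals the object produced by the global integral. Once these identifications are in place, entirety of $\Lambda(s,\pi\times\pi')$ outside $\{0,1\}$ reduces to the analogous statement for mirabolic Eisenstein series, which is supplied by Langlands' general theory, and the order-$1$ estimate follows from the Phragm\'en--Lindel\"of principle applied to the bounded vertical strips using the known size of the gamma factors in $L_\infty(s,\pi\times\pi')$.
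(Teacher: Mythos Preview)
The paper does not supply its own proof of this theorem: it is stated with the citation \cite{JPS} and prefaced by ``Jacquet, Piatetski-Shapiro, and Shalika proved the following theorem,'' so there is no argument in the paper to compare against. Your sketch is a faithful high-level outline of the Jacquet--Piatetski-Shapiro--Shalika method itself (local zeta integrals in Whittaker models, the mirabolic Eisenstein series in the equal-rank case, unfolding into an Euler product, and the local theory producing $L$- and $\varepsilon$-factors), which is exactly the content of the cited reference. One small inaccuracy: the order-$1$ bound for $\Lambda(s,\pi\times\pi')$ is not really a Phragm\'en--Lindel\"of consequence; it comes from the integral representation together with standard growth estimates in vertical strips, or alternatively from the functional equation and Stirling once entirety is known.
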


    It follows from Theorem~\ref{thm:JPSS} that $L(s,\widetilde{\pi}\times\widetilde{\pi}')=\overline{L(\overline{s},\pi\times\pi')}.$ We can explicitly determine the numbers $\mu_{j,j',\pi\times\pi'}(v)$ at all $v\mid\infty$ (resp. $\alpha_{j,j',\pi\times\pi'}(\kp)$ at all $\kp  \mid \kq_{\pi}\kq_{\pi'}$) using the archimedean case of the local Langlands correspondence \cite[Section 3.1]{MS} (resp. \cite[Appendix]{ST}).  These descriptions and \eqref{eqn:GRC1} yield the bounds
	\begin{equation}
		\label{eqn:GRC2}
	|\alpha_{j,j',\pi\times\pi'}(\kp)|\leq  \N\kp^{\theta_m+\theta_{m'}},\qquad \mathrm{Re}(\mu_{j,j',\pi\times\pi'}(v))\geq -(\theta_m+\theta_{m'}).
	\end{equation}
	If $k \geq 1$ is an integer, then we define
	\begin{equation}
	\label{eq:a-rankin-selberg}
	a_{\pi\times\pi'}(\kp^{k})= \begin{cases}
 	a_{\pi}(\kp^{k})a_{\pi'}(\kp^{k})&\mbox{if $\kp  \nmid \kq_{\pi}\kq_{\pi'}$,}\\
 	\sum_{j=1}^m \sum_{j'=1}^{m'}\alpha_{j,j',\pi\times\pi'}(\kp)^{k}&\mbox{if $\kp  \mid \kq_{\pi}\kq_{\pi'}$,}
 \end{cases}%\qquad a_{\widetilde{\pi}\times\widetilde{\pi}'}(v^{k})=\overline{a_{\pi\times\pi'}(v^{k})}.\hspace{-0.5mm}
	\end{equation}
	We have the Dirichlet series identity
	\begin{equation}
		\label{eqn:log_deriv}
	-\frac{L'}{L}(s,\pi\times\pi')=\sum_{\kp}\sum_{k=1}^{\infty}\frac{a_{\pi\times\pi'}(\kp^{k})\log \N\kp}{\N\kp^{k s}},\qquad\mathrm{Re}(s)>1.
	\end{equation}
	%The numbers $a_{\pi\times\widetilde{\pi}}(v^{\ell})$ and $a_{\pi'\times\widetilde{\pi}'}(v^{\ell})$ are non-negative (see also Lemma \ref{lem:HR} below).  By \cite[Appendix]{SoundararajanThorner}, we have the bound
	%\begin{equation}
	%\label{eqn:Brumley}
	%|a_{\pi\times\pi'}(v^{\ell})|^2 \leq a_{\pi\times\widetilde{\pi}}(v^{\ell})a_{\pi'\times\widetilde{\pi}'}(v^{\ell}).
	%\end{equation}
    
	If $v \mid \infty$, we define $\mathfrak{C}_v( \pi\times\pi') = \mathfrak{C}_v(0, \pi\times\pi')$, where
    \[
    \mathfrak{C}_v(it, \pi\times\pi') = \prod_{j=1}^m \prod_{j=1}^{m'} (|\mu_{j,j',\pi\times{\pi}'}(v)+it|+3)^{[F_v:\R]}.
    \]
    The analytic conductor  is given by $\mathfrak{C}(\pi\times\pi') = \mathfrak{C}(0, \pi\times\pi')$, where
	\begin{equation*}
	    \begin{aligned}
	        \mathfrak{C}(it, \pi\times\pi')=D_F^{mm'}\N\kq_{\pi\times{\pi}'}\prod_{v\mid\infty} \mathfrak{C}_v(it, \pi\times\pi').
	    \end{aligned}
	\end{equation*}
    We have the inequality    \begin{equation}\label{eq:conductor-ineq}
        \mathfrak{C}(it, \pi\times \pi') \le  \mathfrak{C}(\pi)^{m'}\mathfrak{C}(\pi')^m(|t|+3)^{mm'[F:\Q]},
    \end{equation}
    as well as $\mathfrak{C}(\pi \times \pi') \le \mathfrak{C}(\pi)^{m'}\mathfrak{C}(\pi')^{m}.$ 
    We provide a detailed proof of \eqref{eq:conductor-ineq} in the appendix of this article, following \cite[Theorem A.2]{Humphries-Brumley} and the correction noted in \cite[P. 9]{Ramakrishnan-Yang}. 

	\section{Preliminary computations and useful bounds}\label{sec:gen-strategy}

    %{\color{red}START DELETION HERE} we instead focus on the Dirichlet coefficients of the logarithmic derivative of $L$-function $(-L'/L)(s,\pi)$, denoted as $a_{\pi}(\kn)$, as these coefficients provide a more direct mapping to the logarithmic derivatives of $L$-functions. the quantity central to the proofs of our main results take the following form. 
   We begin by modifying the family of metrics defined in \eqref{def:distance}. Let $\pi \in \cup_{m=1}^{\infty}\mathfrak{F}^*_m$, $\delta \in \{\pm1\}$, and $\gamma \in \mathbb{R}$. We define $M$ to be the set of functions $\kn \mapsto \delta a_{\pi}(\kn) \mathrm{N}\mathfrak{n}^{i\gamma}$, where $a_{\pi}(\mathfrak{n})$ denote the Dirichlet coefficient of $-(L'/L)(s, \pi)$ at $\kn$.
    Let $m_1, m_2 > 0$ be integers and $\pi_1 \in \mathfrak{F}_{m_1}^*$, $\pi_2 \in \mathfrak{F}_{m_2}^*$. For $\gamma_1, \gamma_2 \in \mathbb{R}$ and $\delta_1, \delta_2 \in \{\pm 1\}$, we define $\mathbb{D}_{\sigma}(\cdot, \cdot)$ on $M$ as
\begin{equation} \label{eq:gen-distance}
\begin{aligned}
&\mathbb{D}_{\sigma}(\delta_1 a_{\pi_1}(\mathfrak{n}) \mathrm{N}\mathfrak{n}^{i\gamma_1}, \delta_2 a_{\pi_2}(\mathfrak{n}) \mathrm{N}\mathfrak{n}^{i\gamma_2})\\ 
&\qquad := \sqrt{\frac{1}{2} \sum_{\mathfrak{p}} \sum_{k=1}^{\infty} \frac{|\delta_1 a_{\pi_1}(\mathfrak{p}^k)\mathrm{N}\mathfrak{p}^{ik\gamma_1} - \delta_2 a_{\pi_2}(\mathfrak{p}^k)\mathrm{N}\mathfrak{p}^{ik\gamma_2}|^2 \log \mathrm{N}\mathfrak{p}}{\mathrm{N}\mathfrak{p}^{k\sigma}} },
\end{aligned}
\end{equation}
where $\sigma > 1$ is a parameter to be optimized later. We note that $\delta_1$ and $\delta_2$ are not limited to $\{\pm1\}$, but this choice suffices for our purposes and simplifies the computations.
To simplify notation, for $j \in \{1, 2, 3\}$, we set $x_j(\mathfrak{n}) := \delta_j a_{\pi_j}(\mathfrak{n}) \mathrm{N}\mathfrak{n}^{i\gamma_j}$. The quantity in \eqref{eq:gen-distance} defines a family of metrics on $M$  as it satisfies the following properties:
\begin{enumerate}
    \item \textbf{Nonnegativity}: ``$\mathbb{D}_{\sigma}(x_1(\mathfrak{n}), x_2(\mathfrak{n})) \ge 0$, with equality if and only if $x_1(\mathfrak{n}) = x_2(\mathfrak{n})$.'' The non-negativity follows from the definition in \eqref{eq:gen-distance}. For the second statement, the ``only if'' direction also follows immediately from the definition. For the ``if'' direction, suppose that $\mathbb{D}_{\sigma}(x_1(\mathfrak{n}), x_2(\mathfrak{n})) = 0$. This implies that 
    \[
    x_1(\mathfrak{p}^k) = x_2(\mathfrak{p}^k) \text{ for all prime powers } \mathfrak{p}^k.
    \]
    Since $a_{\pi_1}(\kn)$ and $a_{\pi_2}(\kn)$, and hence $x_1(\kn)$ and $x_2(\kn)$, are supported on prime powers, then $x_1(\mathfrak{n}) = x_2(\mathfrak{n}) = 0$ for all $\mathfrak{n}$ that are not prime powers. It follows that $x_1(\mathfrak{n}) = x_2(\mathfrak{n})$ for all $\mathfrak{n}$ as desired. 
    
    \item \textbf{Symmetry:} By \eqref{eq:gen-distance}, we have $\mathbb{D}_{\sigma}(x_1(\mathfrak{n}), x_2(\mathfrak{n})) = \mathbb{D}_{\sigma}(x_2(\mathfrak{n}), x_1(\mathfrak{n})).$
    
    \item \textbf{Triangle inequality:} The inequality
    \begin{equation} \label{eq:triangle-D-sigma}
    \mathbb{D}_{\sigma}(x_1(\mathfrak{n}), x_2(\mathfrak{n})) \le \mathbb{D}_{\sigma}(x_1(\mathfrak{n}), x_3(\mathfrak{n})) + \mathbb{D}_{\sigma}(x_3(\mathfrak{n}), x_2(\mathfrak{n}))
    \end{equation}
    follows from \eqref{eq:gen-distance} and Minkowski’s inequality.
\end{enumerate}

In Section~\ref{sec:proof-level1}, we will apply the new family of metrics in \eqref{eq:gen-distance} to establish zero-free regions for  $L(s, \pi)$ and $L(s, \pi \times \pi')$, by carefully choosing $\pi_j, \gamma_j$, and $\delta_j$ for $j \in \{1,2,3\}$, and using an appropriate triangle inequality of the form \eqref{eq:triangle-D-sigma}. Before proceeding, we carry out preliminary computations and derive useful bounds that will be used throughout the rest of the paper. %We note that this approach extends the method developed in \cite[Theorem~8.3]{D}.

\subsection{Preliminary computations} 
    Consider the right-hand side of \eqref{eq:gen-distance}. Direct computation gives   
	\begin{multline*}
		%\begin{aligned}
			|\delta_1 a_{\pi_1}(\kp^k)\N\kp^{ik\gamma_1}- \delta_2 a_{\pi_2}(\kp^k)\N\kp^{ik\gamma_2}|^2\\ = |a_{\pi_1}(\kp^k)|^2+|a_{\pi_2}(\kp^k)|^2 -2\delta_1\delta_2 \Real(a_{\pi_1}(\kp^k)\overline{a_{\pi_2}(\kp^k)}\N\kp^{ik(\gamma_1-\gamma_2)}).
		%\end{aligned}
	\end{multline*}
	 It follows that 
    \begin{equation}\label{eq:gen-distance-1}
	\begin{aligned}  
         &\mathbb{D}_{\sigma}(\delta_1 a_{\pi_1}(\kn)\N\kn^{i\gamma_1},\delta_2 a_{\pi_2}(\kn)\N\kn^{i\gamma_2})^2 \\
         &\quad = \frac{1}{2} \sum_{\kp} \sum_{k=1}^{\infty} \frac{|a_{\pi_1}(\kp^k)|^2\log \N\kp} {\N\kp^{k\sigma}} 
         + \frac{1}{2} \sum_{\kp} \sum_{k=1}^{\infty} \frac{|a_{\pi_2}(\kp^k)|^2\log \N\kp} {\N\kp^{k\sigma}} \\ 
         &\qquad - \delta_1\delta_2 \Real\Big(\sum_{\kp} \sum_{k=1}^{\infty}\frac{a_{\pi_1}(\kp^k)\overline{a_{\pi_2}(\kp^k)}\log \N\kp} {\N\kp^{k\sigma}\N\kp^{ik(\gamma_2-\gamma_1)}}\Big).
        \end{aligned}
	\end{equation}
    
     In the proofs of Theorems~\ref{thm1:ZFR-cuspforms}--\ref{thm2:ZFR-rankin-selberg},  choices of $\pi_1$ and $\pi_2$  
    will be chosen from $\{\pi, \pi',\tilde{\pi},\tilde{\pi}'\}$. If $\N\kq_{\pi} = \N\kq_{\pi'} =1$,  by \eqref{eq:a-rankin-selberg} and \eqref{eqn:log_deriv}, we have 
    \begin{multline}
        \label{eq:gen-distance-2}
        \mathbb{D}_{\sigma}(\delta_1 a_{\pi_1}(\kn)\N\kn^{i\gamma_1},\delta_2 a_{\pi_2}(\kn)\N\kn^{i\gamma_2})^2 = -\frac{1}{2}\frac{L'}{L}(\sigma, \pi_1 \times \tilde{\pi}_1) -\frac{1}{2}\frac{L'}{L}(\sigma, \pi_2 \times \tilde{\pi}_2)\\
        +\delta_1\delta_2 \Real\Big(\frac{L'}{L}(\sigma+(\gamma_2-\gamma_1)i, \pi_1 \times \tilde{\pi}_2)\Big).
      \end{multline}
      If $\N\kq_{\pi}>1$ or $\N\kq_{\pi'} >1$, the first term on the right-hand side  of \eqref{eq:gen-distance-1} equals
	\begin{equation*}
		\begin{aligned}[b]
		   \frac{1}{2} \sum_{\kp} \sum_{k=1}^{\infty} \frac{|a_{\pi_1}(\kp^k)|^2\log \N\kp} {\N\kp^{k\sigma}} &= \frac{1}{2} \sum_{\kp  \nmid \kq_{\pi_1}} \sum_{k=1}^{\infty} \frac{|a_{\pi_1}(\kp^k)|^2\log \N\kp} {\N\kp^{k\sigma}} + 
			\frac{1}{2} \sum_{\kp  \mid \kq_{\pi_1}} \sum_{k=1}^{\infty} \frac{a_{\pi_1 \times \tilde{\pi}_1}(\kp^k)\log \N\kp} {\N\kp^{k\sigma}}\\
            &+ \frac{1}{2} \sum_{\kp  \mid \kq_{\pi_1}} \sum_{k=1}^{\infty} \frac{|a_{\pi_1}(\kp^k)|^2\log \N\kp} {\N\kp^{k\sigma}} 
			- \frac{1}{2} \sum_{\kp  \mid \kq_{\pi_1}} \sum_{k=1}^{\infty} \frac{a_{\pi_1 \times \tilde{\pi}_1}(\kp^k)\log \N\kp} {\N\kp^{k\sigma}},
		\end{aligned}
	\end{equation*}
    which, by \eqref{eq:a-rankin-selberg} and \eqref{eqn:log_deriv}, equals
    \begin{equation}
    \label{eq:gen-distance-term1-starter}
     -\frac{1}{2}\frac{L'}{L}(\sigma, \pi_1 \times \tilde{\pi}_1) +   
			\frac{1}{2} \Big(\sum_{\kp  \mid \kq_{\pi_1}} \sum_{k=1}^{\infty} \frac{|a_{\pi_1}(\kp^k)|^2\log \N\kp} {\N\kp^{k\sigma}} 
			- \sum_{\kp  \mid \kq_{\pi_1}} \sum_{k=1}^{\infty} \frac{a_{\pi_1 \times \tilde{\pi}_1}(\kp^k)\log \N\kp} {\N\kp^{k\sigma}}\Big).
    \end{equation}
	For convenience, we define 
	\begin{equation}\label{eq:def-E}
		E(\sigma+ i\gamma, \pi_1 \times \pi_2) :=  \sum_{\kp  \mid \kq_{\pi_1}\kq_{\pi_2}} \sum_{k=1}^{\infty} \frac{a_{\pi_1}(\kp^k)a_{\pi_2}(\kp^k)\log \N\kp}{\N\kp^{k(\sigma+i\gamma)}} 
		- \sum_{\kp  \mid \kq_{\pi_1}\kq_{\pi_2}} \sum_{k=1}^{\infty} \frac{a_{\pi_1 \times \pi_2}(\kp^k)\log \N\kp} {\N\kp^{k(\sigma+i\gamma)}}.
	\end{equation}
        Then \eqref{eq:gen-distance-term1-starter} can be written as
	\begin{equation*}
		%\label{eq:gen-distance-term1}
		 -\frac{1}{2}\frac{L'}{L}(\sigma, \pi_1 \times \tilde{\pi}_1) +   
		\frac{1}{2} E(\sigma, \pi_1 \times \widetilde{\pi}_1) .
	\end{equation*}
   The second and third terms on the right-hand side of \eqref{eq:gen-distance-1} can be rewritten in the same manner as described above. Consequently, it follows from \eqref{eq:gen-distance-1} that
    	\begin{equation}
		\label{eq:gen-distance-3}
		\begin{aligned}[b]
			&\mathbb{D}_{\sigma}(\delta_1 a_{\pi_1}(\kn)\N\kn^{i\gamma_1},\delta_2 a_{\pi_2}(\kn)\N\kn^{i\gamma_2})^2 \\
			&\quad = -\frac{1}{2}\frac{L'}{L}(\sigma, \pi_1 \times \tilde{\pi}_1)-\frac{1}{2} \frac{L'}{L}(\sigma, \pi_2 \times \tilde{\pi}_2)   +   \delta_1\delta_2\Real\Big(\frac{L'}{L}(\sigma + (\gamma_2-\gamma_1)i, \pi_1 \times \tilde{\pi}_2) \Big)\\
			&\qquad +
			\frac{1}{2}E(\sigma, \pi_1 \times \tilde{\pi}_1) +  \frac{1}{2}E(\sigma, \pi_2 \times \tilde{\pi}_2) -\delta_1\delta_2 \Real(E(\sigma+(\gamma_2-\gamma_1)i, \pi_1 \times \tilde{\pi}_2)).
		\end{aligned}
	\end{equation}

	\subsection{Useful bounds} 
	
	In the proofs of Theorems~\ref{thm1:ZFR-cuspforms}--\ref{thm2:ZFR-rankin-selberg}, we will need to estimate the right-hand side of \eqref{eq:gen-distance-2} in the case where $\N\kq_{\pi} = \N\kq_{\pi'} =1$ and the right-hand side of \eqref{eq:gen-distance-3} in the case where $\N\kq_{\pi}>1$ or $\N\kq_{\pi'}>1$. To proceed, we require the following lemmas.
    \begin{lem}
    \label{lem:stirling}
        %Recall the notation in Section \ref{sec:L-functions}. 
        Let $\pi_1 \in \mathfrak{F}_{m_1}^*$ and $\pi_2 \in \mathfrak{F}_{m_2}^*$, and   let $\rho$ denote the nontrivial zeros of $L(s,\pi_1 \times \pi_2)$. Let $\delta_{\pi_1\times \pi_2}$ be defined as in Section~\ref{subsec:RS}, and let $\gamma_{\mathbb{Q}}$ be the Euler--Mascheroni constant. 
        Let $\sigma, t \in \mathbb{R}$, and define $\Cl[abcon]{hadamard} = \log{\pi}+\gamma_{\mathbb{Q}} > 1.721$.  If $1<\sigma<2$ with  $s=\sigma+it$, then
        \begin{equation*}
            -\Real\Big(\frac{L'}{L}(s, \pi_1 \times \pi_2)\Big) \le  \Real\Big(\frac{\delta_{\pi_1\times\pi_2}}{s-1}\Big) - \sum_{\rho}\Real\Big( \frac{1}{s-\rho}\Big)
            +\Big(\delta -\frac{\Cr{hadamard}}{2}\Big) + \frac{1}{2}\log\mathfrak{C}(it, \pi_1 \times \pi_2).
        \end{equation*}
    \end{lem}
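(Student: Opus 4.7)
The plan is to derive the inequality from the Hadamard factorization of the completed $L$-function $\Lambda(s,\pi_1\times\pi_2)$, which is entire of order $1$ by Theorem~\ref{thm:JPSS}, combined with a Stirling-type bound on the archimedean factors. First, I would equate two expressions for $\Lambda'/\Lambda$: the Hadamard factorization yields
\[
\frac{\Lambda'}{\Lambda}(s,\pi_1\times\pi_2) \;=\; B + \sum_\rho\Bigl(\frac{1}{s-\rho}+\frac{1}{\rho}\Bigr),
\]
where $B = B(\pi_1\times\pi_2)\in\CC$ and $\rho$ runs over the zeros of $\Lambda$, while differentiating the explicit product \eqref{eq:complete-RSL} gives
\[
\frac{\Lambda'}{\Lambda}(s,\pi_1\times\pi_2) \;=\; \delta\Bigl(\frac{1}{s}+\frac{1}{s-1}\Bigr) + \tfrac12\log(D_F^{m_1 m_2}\N\kq_{\pi_1\times\pi_2}) + \frac{L_\infty'}{L_\infty}(s,\pi_1\times\pi_2) + \frac{L'}{L}(s,\pi_1\times\pi_2).
\]
Subtracting and solving for $-L'/L$ produces an exact formula for the left-hand side of the lemma.

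Next, I would apply the functional equation $\Lambda(s,\pi_1\times\pi_2) = W\,\Lambda(1-s,\tilde\pi_1\times\tilde\pi_2)$ from Theorem~\ref{thm:JPSS} together with the zero bijection $\rho\mapsto 1-\overline{\rho}$ (between zeros of $\Lambda(\cdot,\pi_1\times\pi_2)$ and $\Lambda(\cdot,\tilde\pi_1\times\tilde\pi_2)$) to derive the standard identity $\Real B = -\sum_\rho\Real(1/\rho)$. After taking real parts in the formula above, the $-\Real B$ and $-\sum_\rho\Real(1/\rho)$ contributions cancel exactly, leaving only $-\sum_\rho\Real\bigl(1/(s-\rho)\bigr)$ to represent the Hadamard expansion.

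Finally, for $1<\sigma<2$, I would bound the remaining non-zero terms. The elementary estimate $\Real(1/s) = \sigma/(\sigma^2+t^2)\le 1$ absorbs $\delta\,\Real(1/s)$ into the additive constant $\delta$. For $\Real(L_\infty'/L_\infty)$, Stirling's asymptotic for the digamma function $\psi=\Gamma'/\Gamma$, applied to each factor $\Gamma_v\bigl(s+\mu_{j,j',\pi_1\times\pi_2}(v)\bigr)$ via the normalizations $\Gamma_\R(s)=\pi^{-s/2}\Gamma(s/2)$ and $\Gamma_\CC(s)=2(2\pi)^{-s}\Gamma(s)$, yields a per-factor contribution of $\frac{[F_v:\R]}{2}\log(|\mu_{j,j',\pi_1\times\pi_2}(v)+it|+3)$ plus a negative constant. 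Summing over $v\mid\infty$, $1\le j\le m_1$, and $1\le j'\le m_2$ produces $\tfrac12\log\prod_{v\mid\infty}\mathfrak{C}_v(it,\pi_1\times\pi_2)$, which together with the conductor term $\tfrac12\log(D_F^{m_1 m_2}\N\kq_{\pi_1\times\pi_2})$ reconstitutes $\tfrac12\log\mathfrak{C}(it,\pi_1\times\pi_2)$. Any trivial zeros of $L(s,\pi_1\times\pi_2)$ at integers different from $0$ and $1$ are either kept in the sum over $\rho$ or absorbed into the Stirling estimate, consistent with the statement.

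The main obstacle is securing the \emph{precise} additive constant $-\Cr{hadamard}/2 = -(\log\pi+\gamma_\Q)/2$ rather than a generic $O(1)$ or one scaling with $m_1 m_2 [F:\Q]$. This requires using the Malmsten-type representation $\psi(z) = -\gamma_\Q + \sum_{n\ge 0}\bigl(1/(n+1)-1/(n+z)\bigr)$, which isolates $\gamma_\Q$ transparently, together with the $-\tfrac12\log\pi$ (respectively $-\log(2\pi)$) factors arising from the normalizations of $\Gamma_\R$ (respectively $\Gamma_\CC$), and a careful sign analysis to verify the inequality holds uniformly over the admissible range of $\mu_{j,j',\pi_1\times\pi_2}(v)$ permitted by \eqref{eqn:GRC2}.
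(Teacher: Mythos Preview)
Your proposal is correct and follows essentially the same route as the paper: Hadamard factorization of $\Lambda$, the identity $\Real B=-\sum_\rho\Real(1/\rho)$, and a digamma bound on the archimedean factors (the paper quotes \cite[Lemma~4.1]{HIJT} for the per-factor inequality $\Real(\Gamma_v'/\Gamma_v)(s)\le[F_v:\R]\bigl(-\tfrac{\Cr{hadamard}}{2}+\tfrac12\log|s+1|\bigr)$ rather than deriving it from the Malmsten series). One clarification regarding what you call the ``main obstacle'': the per-factor constant $-\Cr{hadamard}/2$ naturally sums to $-\tfrac{\Cr{hadamard}}{2}m_1m_2[F:\Q]$, and the paper then \emph{discards} the favorable factor $m_1m_2[F:\Q]\ge 1$ to reach the stated $-\Cr{hadamard}/2$; the scaling works in your favor, so there is no obstacle there.
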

    \begin{proof} Since $\Lambda(s, \pi_1 \times \pi_2)$ as defined in \eqref{eq:complete-RSL} is entire, it admits a Hadamard factorization
    \begin{equation}\label{eq:hadamard}
        \Lambda(s, \pi_1 \times \pi_2)= e^{a+bs} \prod_{\rho}\Big(1-\frac{s}{\rho}\Big)e^{s/\rho},
    \end{equation}
    where $a=a(\pi_1\times\pi_2)$ and $b = b(\pi_1\times \pi_2)$ are constants, and $\rho$ ranges over zeros of  $\Lambda(s, \pi_1 \times \pi_2)$.
    Using the definition \eqref{eq:complete-RSL}, we  take the real parts of the logarithmic derivatives of both sides of \eqref{eq:hadamard} and apply the fact that $\Real(b) = -\sum_{\rho}\Real(1/\rho)$, which follows from \cite[Theorem 5.7]{IK}. It follows that
        \begin{multline}\label{eq:hadamard-prod}
            -\Re\Big(\frac{L'}{L}(s, \pi_1 \times \pi_2)\Big) = \Re\Big(\frac{\delta_{\pi_1\times \pi_2}}{s} +\frac{\delta_{\pi_1\times \pi_2}}{s-1}\Big) - \sum_{\rho} \Re\Big(\frac{1}{s-\rho}\Big)\\
            + \frac{1}{2} \log(D_F^{m_1m_2}\N\kq_{\pi_1\times \pi_2}) + \Re\Big(\frac{L_{\infty}'}{L_{\infty}}(s, \pi_1 \times \pi_2)\Big),
        \end{multline}
        leaving us to bound the last two terms.
        
        By the definition of $L_{\infty}(s, \pi_1 \times \pi_2)$ in Theorem~\ref{thm:JPSS}, we have that 
        \begin{equation}\label{eq:gamma-estimate}
        \Real\Big(\frac{L_{\infty}'}{L_{\infty}}(s, \pi_1 \times \pi_2) \Big)  = \Real\Big(\sum_{v\mid\infty}\sum_{j_1=1}^{m_1} \sum_{j_2=1}^{m_2}\frac{\Gamma'_v}{\Gamma_v}(s+\mu_{j_1,j_2,\pi_1\times\pi_2}(v))\Big).
        \end{equation} 
       %where $\Gamma_v(s) = \pi^{-s/2}\Gamma(s/2)$ when $F_v = \mathbb{R}$ and  $\Gamma_v(s) = 2(2\pi)^{-s}\Gamma(s)$ when $F_v = \mathbb{C}$, denoted $\Gamma_{\R}(s)$ and $\Gamma_{\mathbb{C}}(s)$, respectively 
       %{\color{blue}Jesse: Why aren't these definitions in Section 2.2?  You use the $\Gamma_v$ notation there, but it is not defined until here. That's not helpful.}. 
       Recall the definition of $\Gamma_v(s)$ in \eqref{eq:gamma_v}. By  \cite[Lemma 4.1]{HIJT} and the fact that $\Gamma_{\mathbb{C}}(s) = \Gamma_{\R}(s)\Gamma_{\R}(s+1)$, there is a constant $\Cr{hadamard} = \log{\pi}+ \gamma_{\mathbb{Q}}$ such that 
        \begin{equation}\label{eq:HIJT} \Real\Big(\frac{\Gamma_v'}{\Gamma_v}(s)\Big) \le [F_v:\mathbb{R}]\Big(-\frac{\Cr{hadamard}}{2} +\frac{1}{2}\log(|s+1|)\Big). %\qquad
        %\Real\Big(\frac{\Gamma_{\mathbb{C}}'}{\Gamma_{\mathbb{C}}}(s)\Big) \le -c +\frac{1}{2}\log((\sigma+1)^2+t^2).
        \end{equation}
    Denoting $\mu_{j_1, j_2, \pi_1 \times \pi_2}(v)$ with $\mu_{j_1,j_2}(v)$, we have $\log(|s+\mu_{j_1,j_2}+1|) \le  \log(|\mu_{j_1,j_2}+it|+3)$, using $\sigma <2$. Thus it follows from \eqref{eq:gamma-estimate} and \eqref{eq:HIJT} that
    \begin{equation*}
    \begin{aligned}
    \Real\Big(\frac{L_{\infty}'}{L_{\infty}}(s, \pi_1 \times \pi_2) \Big)%\\
         %&\qquad \le -\frac{\Cr{hadamard}mm'[F:\mathbb{Q}]}{2}+ \Real\Big(\sum_{v\mid\infty}\sum_{j=1}^m \sum_{j'=1}^{m'}\frac{[F_v:\mathbb{R}]}{4}\log((\sigma+1+\Re(\mu_{j_1,j_2})) ^2+(t+\Imag(\mu_{j_1,j_2}))^2) \Big)\\
         %&\qquad 
         \le -\frac{\Cr{hadamard}mm'[F:\mathbb{Q}]}{2}+ \frac{1}{2}\Real\Big(\sum_{v\mid\infty}\sum_{j=1}^m \sum_{j'=1}^{m'}[F_v:\mathbb{R}]\log(|\mu_{j_1,j_2}+it|+3)\Big).
    \end{aligned}
    \end{equation*}
  By using the above inequality in \eqref{eq:hadamard-prod} and applying the definition of $\mathfrak{C}(it, \pi_1 \times \pi_2)$ as defined in Section~\ref{subsec:RS}, we obtain
   \begin{equation}
   \begin{aligned}
            -\Real\Big(\frac{L'}{L}(s, \pi_1 \times \pi_2)\Big) &\le  \Real\Big(\frac{\delta_{\pi_1\times \pi_2}}{s}+\frac{\delta_{\pi_1\times \pi_2}}{s-1}\Big) - \sum_{\rho} \Real\Big( \frac{1}{s-\rho}\Big)
            -\frac{\Cr{hadamard}mm'[F:\mathbb{Q}]}{2}\\ &\quad
            +  \frac{1}{2}\log\mathfrak{C}(it, \pi_1 \times \pi_2).
        \end{aligned}
        \end{equation}
    The lemma then follows from the fact that $\sigma > 1$ and $mm'[F:\mathbb{Q}] \ge 1$.
    \end{proof}
    
    %\begin{prop}
	%	\label{lem:stirling}
	%	Recall the notation in Section \ref{sec:L-functions}. Let $\pi_1 \in \mathfrak{F}_{m_1}^*$ and $\pi_2 \in \mathfrak{F}_{m_2}^*$.   Let $\rho$ denote the nontrivial zeros of $L(s,\pi_1 \times \pi_2)$ different from $0$ and $1$. Let $\delta$ be the order of pole or zero of $L(s,\pi_1 \times \pi_2)$ at 1.  For any $s=\sigma+it$ in the strip $-\frac{1}{2} \le \sigma \le 2$, we have
	%	\begin{multline*}
	%	\Real\Big(\frac{L'}{L}(s,\pi_1 \times \pi_2)\Big) + \Real\Big(\frac{\delta}{s}+ \frac{\delta}{s-1}\Big)-\sum_{|s+\mu_{j_1,j_2,\pi_1 \times \pi_2}(v)| < 1} \Real\Big(\frac{1}{s+\mu_{j_1, j_2, \pi_1 \times \pi_2}(v)}\Big)\\ - \sum_{|s-\rho| < 1} \Real\Big(\frac{1}{s-\rho}\Big)
      %  \ll \log \mathfrak{C}(it, \pi_1 \times \pi_2).
	%	\end{multline*}
	%\end{prop}
	%\begin{proof}
	%	The proof is immediate from \cite[Proposition 5.7]{IK}. 
	%\end{proof}
    We note that the contributions from zeros $\rho$ of $L(s, \pi_1 \times \pi_2)$ in Lemma~\ref{lem:stirling} can be handled by observing that for $\sigma > 1$ and any nontrivial zero $\rho = \beta+i\gamma$, we have 
	\begin{equation}
		\label{eq:positivity}
		\Real\Big(\frac{1}{\sigma+it -\rho}\Big) = \frac{\sigma -\beta}{(\sigma-\beta)^2+(t - \gamma)^2} \ge 0. 
	\end{equation}
	%Moreover, since $\sigma > 1$ and, by \eqref{eqn:GRC2}, $\Real(\mu_{j_1, j_2, \pi_1 \times \pi_2}(v)) \ge -(\theta_{\pi_1}+\theta_{\pi_2}) > -\sigma$ , we then have
	%\begin{equation}
	%	\label{eq:spectral-parameter}
	%	\Real\Big(\frac{1}{\sigma+it+\mu_{j_1,j_2,\pi_1 \times \pi_2}(v)}\Big) = \frac{\sigma + \Real(\mu_{j_1, j_2, \pi_1 \times \pi_2}(v))}{(\sigma+\Real(\mu_{j_1, j_2, \pi_1 \times \pi_2}(v)))^2 + (t+\Imag(\mu_{j_1, j_2, \pi_1 \times \pi_2}(v)))^2} \ge 0. 
	%\end{equation}
    Now, we let $\omega(\kn)$ denote the number of distinct prime ideals dividing the ideal $\kn$, namely,
    \[
     \omega(\kn) := | \{ \kp :  \kp \mid \kn\} |.
    \]
    The next lemma provides an estimate for $E(\sigma+i\gamma, \pi_1 \times \pi_2)$, which arises only when $\N\kq_{\pi}>1$ or $\N\kq_{\pi'}>1$, given that we take $\pi_1, \pi_2 \in \{\pi,\pi',\tilde{\pi},\tilde{\pi}'\}$. 
    
    \begin{lem}\label{lem:general-error}
		Recall the definition of $E(\sigma+it, \pi_1 \times \pi_2)$ in \eqref{eq:def-E}. Let $\pi_1 \in \mathfrak{F}_{m_1}^*$ and $\pi_2 \in \mathfrak{F}_{m_2}^*$.%, and let $\omega(\kn)$ denote the number of distinct prime ideals that divide the ideal $\kn$. 
        For $i \in\{1,2\}$, let $\theta_{m_i} \in[0,\frac{1}{2}-\frac{1}{m_i^2+1}]$ be the best bound towards the generalized Ramanujan conjecture for $\pi_i$. Then we have 
		\begin{equation*}%\label{eq:error-E} 
			\begin{aligned}
				|E(\sigma+ i\gamma, \pi_1\times \pi_2)|
				\ll \frac{m_1m_2}{1-\theta_{m_1}-\theta_{m_2}}\omega(\kq_{\pi_1}\kq_{\pi_2}).%\frac{\log(N_{\pi}N_{\pi'})}{\log(e\log(e N_{\pi}N_{\pi'}))}.
			\end{aligned}
		\end{equation*}
	\end{lem}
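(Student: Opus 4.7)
The plan is to bound $|E(\sigma+i\gamma, \pi_1 \times \pi_2)|$ termwise by the triangle inequality and estimate each term using the pointwise bounds on the Satake parameters coming from progress towards the generalized Ramanujan conjecture, namely \eqref{eqn:GRC1} and \eqref{eqn:GRC2}. The computation is elementary once the right inputs are in place; nothing beyond the local factor definitions in Section \ref{subsec: standardL} and Section \ref{subsec:RS} is needed.

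First, I will derive pointwise bounds on the Dirichlet coefficients. Since $a_{\pi_i}(\kp^k) = \sum_{j=1}^{m_i} \alpha_{j,\pi_i}(\kp)^k$, the bound $|\alpha_{j,\pi_i}(\kp)| \le \N\kp^{\theta_{m_i}}$ from \eqref{eqn:GRC1} yields $|a_{\pi_i}(\kp^k)| \le m_i\N\kp^{k\theta_{m_i}}$, and hence $|a_{\pi_1}(\kp^k)\,a_{\pi_2}(\kp^k)| \le m_1m_2\,\N\kp^{k(\theta_{m_1}+\theta_{m_2})}$. Similarly, \eqref{eqn:GRC2} together with the definition \eqref{eq:a-rankin-selberg} of $a_{\pi_1\times\pi_2}$ at ramified primes gives $|a_{\pi_1\times\pi_2}(\kp^k)| \le m_1m_2\,\N\kp^{k(\theta_{m_1}+\theta_{m_2})}$ for every $\kp \mid \kq_{\pi_1}\kq_{\pi_2}$.

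Next, I apply the triangle inequality to \eqref{eq:def-E} and insert these bounds to obtain
\begin{equation*}
|E(\sigma+i\gamma, \pi_1 \times \pi_2)| \;\le\; 2m_1m_2 \sum_{\kp\mid \kq_{\pi_1}\kq_{\pi_2}}\sum_{k=1}^{\infty}\frac{\log\N\kp}{\N\kp^{k\eta}},
\end{equation*}
where $\eta := \sigma - \theta_{m_1} - \theta_{m_2}$. Summing the inner geometric series in $k$ gives $\frac{\log\N\kp}{\N\kp^{\eta}-1}$, and the elementary inequality $e^y \ge 1+y$ applied with $y=\eta\log\N\kp$ yields $\N\kp^{\eta}-1 \ge \eta\log\N\kp$, so each ramified prime contributes at most $1/\eta$ uniformly in $\N\kp$. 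In the range $\sigma \ge 1$ where the lemma will be applied (the sums defining $E$ converging absolutely there), we have $\eta \ge 1-\theta_{m_1}-\theta_{m_2} > 0$ using the bound $\theta_{m_i}\le \tfrac{1}{2}-\tfrac{1}{m_i^2+1}$, and summing over the $\omega(\kq_{\pi_1}\kq_{\pi_2})$ ramified primes gives the claimed estimate.

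There is no real obstacle here, only bookkeeping: the only mildly delicate point is that the bound $\sum_{k\ge 1}\N\kp^{-k\eta}\log\N\kp \le 1/\eta$ is uniform in $\kp$, so that the prime-power sum collapses to the right factor $1/(1-\theta_{m_1}-\theta_{m_2})$ rather than decaying with $\N\kp$; otherwise one would lose the clean dependence on $\theta_{m_i}$ in the final bound.
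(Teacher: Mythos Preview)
Your proof is correct and follows essentially the same approach as the paper's: both bound $|a_{\pi_i}(\kp^k)|$ and $|a_{\pi_1\times\pi_2}(\kp^k)|$ by $m_1m_2\N\kp^{k(\theta_{m_1}+\theta_{m_2})}$ via \eqref{eqn:GRC1}--\eqref{eqn:GRC2}, sum the geometric series, and then use $e^y-1\ge y$ (the paper writes out the exponential series) to bound $\frac{\log\N\kp}{\N\kp^{\eta}-1}$ uniformly by $1/\eta$. The only cosmetic difference is that the paper replaces $\sigma$ by $1$ immediately (using $\sigma>1$) whereas you carry $\eta=\sigma-\theta_{m_1}-\theta_{m_2}$ and bound it at the end.
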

	
	\begin{proof}
		For $i \in \{1,2\}$, we first note that 
		\begin{equation*}\label{eq:localroot}
			|a_{\pi_i}(\kp^k)| = \Big|\sum_{j=1}^{m_i} \alpha_{j,\pi_i}(\kp)^k \Big| \le \sum_{j=1}^{m_i} \N\kp^{k\theta_{m_i}} = m_i\N\kp^{k\theta_{m_i}}.
		\end{equation*}
		By \eqref{eqn:GRC1}, \eqref{eq:-L'/L},  \eqref{eqn:GRC2}, and \eqref{eq:a-rankin-selberg}, for $\kp  \mid \kq_{\pi_1}\kq_{\pi_2}$, we have
		\begin{equation*}\label{eq:localroot2}
        \begin{aligned}
        & |a_{\pi_1}(\kp^k)||a_{\pi_2}(\kp^k)| \le m_1m_2\N\kp^{k(\theta_{m_1}+\theta_{m_2})}; \\
        &|a_{\pi_1 \times \pi_2}(\kp^k)| =\Big|\sum_{j=1}^{m_1}\sum_{j=1}^{m_2} \alpha_{j_1,j_2, \pi_1 \times \pi_2}(\kp)^k\Big|  \le m_1m_2 \N\kp^{k(\theta_{m_1}+\theta_{m_2})}.
         \end{aligned}
		\end{equation*}
		Since $\sigma>1$, we have 
		\begin{equation*}\label{eq:error}
			\begin{aligned}
				|E(\sigma+ i\gamma, \pi_1 \times \pi_2)|
				&\ll  \sum_{\kp  \mid \kq_{\pi_1}\kq_{\pi_2}} \sum_{k=1}^{\infty} \frac{m_1m_2
                \N\kp^{k(\theta_{m_1}+\theta_{m_2})}\log \N\kp}{\N\kp^{k}} \\
                &=  m_1m_2\sum_{\kp  \mid \kq_{\pi_1}\kq_{\pi_2}}\frac{\log \N\kp}{\N\kp^{1-\theta_{m_1}-\theta_{m_2}}-1}.
			\end{aligned}
		\end{equation*}
		%where in the final step, we employed the fact that
        The lemma then follows from the observation that
	\begin{equation*}
			\begin{aligned}
		(1-\theta_{m_1}-\theta_{m_2})\log \N\kp =\log(\N\kp^{1-\theta_{m_1}-\theta_{m_2}})
        \leq \sum_{j=1}^{\infty}\frac{(\log(\N\kp^{1-\theta_{m_1} -\theta_{m_2}}))^{j}}{j!},
		\end{aligned} 
		\end{equation*} 
        which equals $\N\kp^{1-\theta_{m_1}-\theta_{m_2}}-1$ as desired. 
	\end{proof}

	\section{Proofs of Theorems~\ref{thm1:ZFR-cuspforms}--\ref{thm2:ZFR-rankin-selberg} when \texorpdfstring{$\N\kq_{\pi}=\N\kq_{\pi'} = 1$}{N1}}\label{sec:proof-level1}

    In this section, we establish Theorems~\ref{thm1:ZFR-cuspforms}--\ref{thm2:ZFR-rankin-selberg} under the assumption that $\N\kq_{\pi} = \N\kq_{\pi'} = 1$. 

    	\subsection{Proof of Theorem~\ref{thm1:ZFR-cuspforms}}\label{subsec:proof-level1-thm1}
	
	Let $\pi\in\mathfrak{F}_m^*$ and $\rho = \beta +i\gamma$ be a nontrivial zero of $L(s, \pi)$. 
	Assume that for any constant $\Cl[abcon]{zfr0} > 0$, we have
	\begin{equation}\label{eq:assumption-beta1}
		\begin{aligned}
			\beta
			\ge 1- \frac{1}{\Cr{zfr0}\mathcal{L}_1}, \qquad \mathcal{L}_1 = (2m+3)\log(\mathfrak{C}(\pi) (3+|\gamma|)^{m[F:\Q]}).
		\end{aligned}
	\end{equation}
	We will show that \eqref{eq:assumption-beta1} does not hold for sufficiently large $\Cr{zfr0}$. 
    
    Consider the triangle inequality
	\begin{equation*}
		\begin{aligned}
			%\label{eq:triangle-ineq-cusp}
			\mathbb{D}_{\sigma}(a_{\pi}(\kn)\N\kn^{-i\gamma}, \overline{a_{\pi}(\kn)}\N\kn^{i\gamma}) &\le 
			\mathbb{D}_{\sigma}( a_{\pi}(\kn)\N\kn^{-i\gamma}, -1) + \mathbb{D}_{\sigma}(-1, \overline{a_{\pi}(\kn)}\N\kn^{i\gamma})\\
			&= 2\mathbb{D}_{\sigma}( a_{\pi}(\kn)\N\kn^{-i\gamma},-1).
		\end{aligned}
	\end{equation*}
	By squaring both sides of the above inequality, it follows that
	\begin{equation*}
		%\label{eq:triangle-ineq-cusp} 
		\mathbb{D}_{\sigma}(a_{\pi}(\kn)\N\kn^{-i\gamma}, \overline{a_{\pi}(\kn)}\N\kn^{i\gamma})^2 \le  4\mathbb{D}_{\sigma}( a_{\pi}(\kn)\N\kn^{-i\gamma},-1)^2.
	\end{equation*}
	Applying the computation in \eqref{eq:gen-distance-2} to the above inequality, we then have
	\begin{equation}\label{eq:thm1-eq1}
		0 \le -\frac{L'}{L}(\sigma, \pi\times \tilde{\pi})-2\frac{\zeta_F'}{\zeta_F}(\sigma) -\Real\Big(\frac{L'}{L}(\sigma+2i\gamma, \pi \times \pi)\Big)-4\Real\Big(\frac{L'}{L}(\sigma+i\gamma, \pi)\Big).
	\end{equation}
	
	For the first term in \eqref{eq:thm1-eq1}, we apply Lemma~\ref{lem:stirling} and use \eqref{eq:positivity} to drop the contribution of all nontrivial zeros, yielding 
	\begin{equation}
		\label{eq:cuspform-term1-2}
		\begin{aligned}
			-\frac{L'}{L}(\sigma, \pi \times \tilde{\pi}) %&\le \frac{1}{\sigma} + \frac{1}{\sigma-1} -\frac{\Cr{hadamard}m^2[F:\mathbb{Q}]}{2}+\frac{1}{2}\log \mathfrak{C}(\pi \times \tilde{\pi})\\
			&\le \frac{1}{\sigma-1}
			+\Big(1 -\frac{\Cr{hadamard}}{2}\Big)+\frac{1}{2}\log \mathfrak{C}(\pi \times \tilde{\pi}).
		\end{aligned}
	\end{equation}	
	Similarly, for the second term of \eqref{eq:thm1-eq1}, we have 
	\begin{equation}
		\label{eq:cuspform-term2}
		-2\frac{\zeta_F'}{\zeta_F}(\sigma) \le  \frac{2}{\sigma-1}+(2-\Cr{hadamard})
		+ \log  \mathfrak{C}( \mathbbm{1}).
	\end{equation}
    For the last term of \eqref{eq:thm1-eq1}, we apply Lemma~\ref{lem:stirling} and use \eqref{eq:positivity}
	to remove the contribution of all nontrivial zeros except for $\rho=\beta+i\gamma$. It follows that
	\begin{equation}
		\label{eq:cuspform-term4}
		-4\Real\Big(\frac{L'}{L}(\sigma+i\gamma, \pi)\Big)\le \frac{4\delta_{\pi = \mathbbm{1}}(\sigma-1)}{(\sigma-1)^2+\gamma^2} - \frac{4}{\sigma-\beta}+4\delta_{\pi = \mathbbm{1}}-2\Cr{hadamard}
		+ 2\log \mathfrak{C}(i\gamma, \pi).
	\end{equation}
	We then consider the following two cases.
	\subsubsection{Case 1: \texorpdfstring{$\pi$}{pi} is not self-dual}\label{subsubsec:case1-thm1}
	Then 
	$L(s, \pi \times \pi)$ is entire. For the third term of \eqref{eq:thm1-eq1}, by applying Lemma~\ref{lem:stirling} and \eqref{eq:positivity}, we  obtain
	\begin{equation}
		\label{eq:cuspform-term3}
		-\Real\Big(\frac{L'}{L}(\sigma+2i\gamma, \pi \times \pi)\Big) \le -\frac{\Cr{hadamard}}{2} + \frac{1}{2}\log \mathfrak{C}(2i\gamma, \pi \times \pi).
	\end{equation}
	Inserting \eqref{eq:cuspform-term1-2}--\eqref{eq:cuspform-term3} in \eqref{eq:thm1-eq1} and using Lemma~\ref{lem:brumley} and the fact that $3-4\Cr{hadamard} < 0$, we have 
	\begin{equation}\label{eq:thm1-3-4-1}
		\begin{aligned}
			0 
			&\le \frac{3}{\sigma-1}- \frac{4}{\sigma-\beta} + \mathcal{L}_1.%(2m+3)\log(\mathfrak{C}(\pi)(3+|\gamma|)^{m[F:\Q]}).
		\end{aligned}
	\end{equation}
	Let $\Cl[abcon]{sigma-cusp} > 0$ denote a sufficiently large constant, and choose $\sigma = 1 + {1}/({\Cr{sigma-cusp}\mathcal{L}_1})$. %\begin{equation}\label{eq:sigma-thm1}
	%	\sigma = 1 + {1}/({\Cr{sigma-cusp}\mathcal{L}_1}).
	%\end{equation}
	Using \eqref{eq:assumption-beta1} and substituting our choice of $\sigma$ into \eqref{eq:thm1-3-4-1}, it remains to determine the smallest $\Cr{zfr0} > 0$ for which 
	\begin{equation}\label{eq:thm1.1-finaleq}
		0 \le 3\Cr{sigma-cusp}\mathcal{L}_1-\frac{4\Cr{zfr0}\Cr{sigma-cusp}}{\Cr{zfr0}+\Cr{sigma-cusp}}\mathcal{L}_1+\mathcal{L}_1
	\end{equation}
	fails for some $\Cr{sigma-cusp} > 0$. Since \eqref{eq:thm1.1-finaleq} fails only when $\Cr{sigma-cusp} >1$, we then contradict \eqref{eq:thm1.1-finaleq} once  taking
	\[
	\Cr{zfr0} >  \min_{\Cr{sigma-cusp} > 1} \frac{3\Cr{sigma-cusp}^2+\Cr{sigma-cusp}}{\Cr{sigma-cusp}-1} =  13.9282\ldots
	\]
	
 \subsubsection{Case 2: \texorpdfstring{$\pi$}{pi} is self-dual}\label{subsubsec:Thm1-case2} Then $L(s, \pi \times \pi)$ has a simple pole at $s=1$. Using Lemma~\ref{lem:stirling} and \eqref{eq:positivity}, we obtain the following bound for the third term of \eqref{eq:thm1-eq1}:
\begin{equation}
	\label{eq:cuspform-term3-1}
	-\Real\Big(\frac{L'}{L}(\sigma+2i\gamma, \pi \times \pi)\Big) \le \frac{\sigma-1}{(\sigma-1)^2 +4\gamma^2}
	+\Big(1 -\frac{\Cr{hadamard}}{2}\Big)+ \frac{1}{2}\log \mathfrak{C}(2i\gamma, \pi \times \pi).
\end{equation}
Inserting \eqref{eq:cuspform-term1-2}--\eqref{eq:cuspform-term4} and \eqref{eq:cuspform-term3-1} in \eqref{eq:thm1-eq1} and using Lemma~\ref{lem:brumley}, we have
\begin{equation}
	\label{eq:cusp-case2}
	\begin{aligned}
		0 %&\le \frac{3}{\sigma-1}+ \frac{\sigma-1}{(\sigma-1)^2 +4\gamma^2} - \frac{4}{\sigma-\beta} + (4-4\Cr{hadamard})+ (2m+3)\log(\mathfrak{C}(\pi) (3+|\gamma|)^{m[F:\Q]}) \\
		&\le \frac{3}{\sigma-1}+ \frac{4(\sigma-1)}{(\sigma-1)^2+\gamma^2} + \frac{\sigma-1}{(\sigma-1)^2 +4\gamma^2} - \frac{4}{\sigma-\beta}+ (8-4\Cr{hadamard})+\mathcal{L}_1. %(2m+3)\log(\mathfrak{C}(\pi) (3+|\gamma|)^{m[F:\Q]}).
	\end{aligned}
\end{equation}
    Let $\Cl[abcon]{sigma-cusp-3}>0$ denote a sufficiently large constant.
    We consider the following three scenarios.
    \begin{enumerate}[ leftmargin=1cm]
        \item\label{thm1-case3.1}  Suppose that
        $|\gamma| \ge {4}/({\Cr{sigma-cusp-3}\mathcal{L}_1})$. 
        Using the definition of $\mathcal{L}_1$ in \eqref{eq:assumption-beta1}, the last two terms of \eqref{eq:cusp-case2} can be bounded above by
    \[
    \Big(\frac{8-4\Cr{hadamard}}{4\log 3} + 1\Big) \mathcal{L}_1 \le 1.2531\mathcal{L}_1 := k\mathcal{L}_1.  
    \]
    Choosing $\sigma = 1 + {1}/({\Cr{sigma-cusp-3}\mathcal{L}_1})$, we have $\gamma \ge 4(\sigma-1)$. It follows from \eqref{eq:cusp-case2} that
    \begin{equation}\label{eq:pi-1}
        0 \le \frac{3592}{1105(\sigma-1)} -\frac{4}{\sigma-\beta}
        +  k\mathcal{L}_1.
    \end{equation}
    Using \eqref{eq:assumption-beta1} and our choice of $\sigma$, we contradict \eqref{eq:pi-1} when 
    \begin{equation}\label{eq:minizer-cupsform-2}
    \Cr{zfr0} >  \min_{\Cr{sigma-cusp-3} > 1105k/828} \frac{3592\Cr{sigma-cusp-3}^2+1105k\Cr{sigma-cusp-3}}{828\Cr{sigma-cusp-3}-1105k} =  32.2770\ldots,
    \end{equation}
    where the minimum is attained when $\Cr{sigma-cusp-3} = 3.5273\ldots$
    
    \item\label{thm1-case3.2} Suppose that $0 < |\gamma| < {4}/({\Cr{sigma-cusp-3}\mathcal{L}_1})$. Since $\gamma \neq 0$, then both $\rho$ and $\overline{\rho}$ are  distinct zeros of $L(s,\pi)$. Then by Lemma~\ref{lem:stirling} and  \eqref{eq:positivity}, we have
    \begin{equation}\label{eq:thm1-case3.2}
        -\frac{L'}{L}(\sigma, \pi) \le \frac{1}{\sigma-1} - \frac{2(\sigma-\beta)}{(\sigma-\beta)^2+\gamma^2} + \Big(1-\frac{\Cr{hadamard}}{2}\Big)+ \frac{1}{2}\log \mathfrak{C}(\pi).
    \end{equation}
        By the definition of $\mathcal{L}_1$, the last two terms of \eqref{eq:thm1-case3.2} can be bounded above by
        \[
        \Big(\frac{2-\Cr{hadamard}}{8\log 3} + \frac{1}{8}\Big) \mathcal{L}_1 \le 0.1567\mathcal{L}_1 := k'\mathcal{L}_1.  
        \]
         We observe that  $-(L'/L)(\sigma, \pi) \ge 0$ when $\sigma > 1$. Choosing $\sigma = 1 + {8}/({\Cr{sigma-cusp-3}\mathcal{L}_1})$, it follows that  $|\gamma| < \frac{1}{2}|\sigma-\beta|$. It then follows from \eqref{eq:thm1-case3.2} that
        \begin{equation}\label{eq:pi-2}
         0 \le \frac{1}{\sigma-1} -\frac{8}{5(\sigma-\beta)}
        +  k'\mathcal{L}_1.   
        \end{equation}
        Using \eqref{eq:assumption-beta1}, our choice of $\sigma$, and the value of $\Cr{sigma-cusp-3}$ from \eqref{eq:minizer-cupsform-2}, we contradict \eqref{eq:pi-2} when 
        \[
        \Cr{zfr0} > \frac{5\Cr{sigma-cusp-3}^2+40k'\Cr{sigma-cusp-3}}{24\Cr{sigma-cusp-3}-320k'} =  2.4431\ldots
        \]
        Hence, taking $\Cr{zfr0} = 33$, we have proved a zero-free region for Cases \eqref{thm1-case3.1} and \eqref{thm1-case3.2}.
    
        \item 
        
        Lastly, we handle potential real zeros of $L(s, \pi)$.
        Let $N$ be the number of all real zeros, counted with multiplicity, in the range $1- 1/(33\mathcal{L}_1) \le \beta \le 1$. Consider $-(L'/L)(\sigma,\pi)$ and use Lemma~\ref{lem:stirling}, \eqref{eq:positivity}, and the fact that $-(L'/L)(\sigma,\pi) \ge 0$ when $\sigma >1$. We have
        \[
        0 \le \frac{1}{\sigma-1}-\frac{N}{\sigma-\beta}+k'\mathcal{L}_1.
        \]
        Using the same choice of $\sigma$ as in Case \eqref{thm1-case3.2}, we obtain
        \[
        0 \le \frac{\Cr{sigma-cusp-3}}{8}-\frac{N}{8/\Cr{sigma-cusp-3}+1/33}+k',
        \]
        which, upon using the value of $\Cr{sigma-cusp-3}$ from \eqref{eq:minizer-cupsform-2}, implies that $N < 2$. Thus there is at most one real simple zero.
    \end{enumerate}
   
   Thus, taking $\Cr{zfr0} = 33$ ensures a contradiction in all cases, whether $L(s, \pi \times \pi')$ is entire or not.  Theorem~\ref{thm1:ZFR-cuspforms} thus follows.

    %Thus taking $\Cr{zfr0} = 16$ guarantees a contradiction in each of the above cases,  establishing Theorem \ref{thm1:ZFR-cuspforms}. 
    To prove the second claim---namely, when $\pi$ is self-dual, the exponent $m[F:\Q]$ of $3+|t|$ in Theorem~\ref{thm1:ZFR-cuspforms} can be reduced to $[F:\Q]$---we consider the alternative inequality
	\begin{equation*}
		\begin{aligned}
			%\label{eq:triangle-ineq-cusp}
			\mathbb{D}_{\sigma}(\N\kn^{-i\gamma}, \N\kn^{i\gamma}) \le 
			\mathbb{D}_{\sigma}( \N\kn^{-i\gamma}, -a_{\pi}(\kn)) + \mathbb{D}_{\sigma}(-a_{\pi}(\kn), \N\kn^{i\gamma})
			= 2\mathbb{D}_{\sigma}( \N\kn^{-i\gamma},-a_{\pi}(\kn)), 
		\end{aligned}
	\end{equation*}
    which holds when $\pi$ is self-dual. 
    Squaring both sides of the above inequality, we have
	\begin{equation*}
		\begin{aligned}
			%\label{eq:triangle-ineq-cusp-2}
			\mathbb{D}_{\sigma}(\N\kn^{-i\gamma}, \N\kn^{i\gamma})^2 \le 
			4\mathbb{D}_{\sigma}( \N\kn^{-i\gamma},-a_{\pi}(\kn))^2.
		\end{aligned}
	\end{equation*}   
   By the computation in \eqref{eq:gen-distance-2}, the above inequality can be rewritten as
    \begin{equation*}
		0 \le -\frac{\zeta_F'}{\zeta_F}(\sigma) -2\frac{L'}{L}(\sigma, \pi\times \tilde{\pi}) -4\Real\Big(\frac{L'}{L}(\sigma+i\gamma, \pi )\Big)-\Real\Big(\frac{\zeta_F'}{\zeta_F}(\sigma+2i\gamma)\Big). 
    \end{equation*}
     Following a similar analysis as in Section~\ref{subsubsec:Thm1-case2}, we arrive at
	\begin{multline*}
			0 
			\le \frac{3}{\sigma-1}+ \frac{4(\sigma-1)}{(\sigma-1)^2 +\gamma^2}+ \frac{\sigma-1}{(\sigma-1)^2 +4\gamma^2} - \frac{4}{\sigma-\beta}+(8-4\Cr{hadamard})\\
            + (2m+3)\log(\mathfrak{C}(\pi) (3+|\gamma|)^{[F:\Q]}).
		\end{multline*}  
        Since the inequality above differs from \eqref{eq:cusp-case2} only in the exponent of  $3+|\gamma|$ in the last term, our claim follows using the same reasoning as in Section~\ref{subsubsec:Thm1-case2}.

    \subsection{Proof of Theorem~\ref{thm1:ZFR-rankin-selberg}}\label{subsec:thm2}
	
	Let $\pi\in\mathfrak{F}^*_m$ and $\pi' \in\mathfrak{F}^*_{m'}$, and let $\pi \neq\tilde{\pi}$ and $\pi'=\tilde{\pi}'$.  Let $\rho = \beta +i\gamma$ be a nontrivial zero of $L(s, \pi \times \pi')$. 
    Assume that for any constant $\Cl[abcon]{zfr1} > 0$, we have
    \begin{equation}\label{eq:assumption-beta-RS1}
		\begin{aligned}
			\beta
			\ge 1- \frac{1}{\Cr{zfr1}\mathcal{L}_2}, \qquad \mathcal{L}_2 = 2(m+m')\log(\mathfrak{C}(\pi)\mathfrak{C}(\pi') (3+|\gamma|)^{m[F:\Q]}).
		\end{aligned}
	\end{equation}
	We will show that \eqref{eq:assumption-beta-RS1} does not hold for sufficiently large $\Cr{zfr1}$. 
    
    Consider the triangle inequality
	\begin{align*}
		%\mathbb{D}_{\sigma}(a_{\pi}(n)n^{-2i\gamma}, \overline{a_{\pi}(n)}) &= 
		\mathbb{D}_{\sigma}(a_{\pi}(\kn)\N\kn^{-i\gamma}, \overline{a_{\pi}(\kn)}\N\kn^{i\gamma}) &\le \mathbb{D}_{\sigma}(a_{\pi}(\kn)\N\kn^{-i\gamma}, -a_{\pi'}(\kn)) +
		\mathbb{D}_{\sigma}(-a_{\pi'}(\kn), \overline{a_{\pi}(\kn)}\N\kn^{i\gamma})\\
		&= 2\mathbb{D}_{\sigma}(a_{\pi}(\kn)\N\kn^{-i\gamma}, -\overline{a_{\pi'}(\kn)}),
	\end{align*}
	using that $\pi' = \tilde{\pi}'$ in the last step. Squaring both sides of the above inequality, we obtain
	\begin{equation}
		\label{eq:triangle-ineq}
		0\le 4\mathbb{D}_{\sigma}(a_{\pi}(\kn)\N\kn^{-i\gamma}, -\overline{a_{\pi'}(\kn)})^2-\mathbb{D}_{\sigma}(a_{\pi}(\kn)\N\kn^{-2i\gamma}, \overline{a_{\pi}(\kn)})^2.
	\end{equation}
	By \eqref{eq:gen-distance-2}, it follows from \eqref{eq:triangle-ineq} that
    \begin{multline}\label{eq:rankin-4}
		%\begin{aligned}
			0 \leq -\frac{L'}{L}(\sigma,\pi\times\tilde{\pi})-2\frac{L'}{L}(\sigma,\pi'\times\tilde{\pi}')-4\Real\Big(\frac{L'}{L}(\sigma+i\gamma,\pi\times\pi')\Big)\\
            - \Real\Big(\frac{L'}{L}(\sigma+2i\gamma,\pi\times\pi)\Big).
		%\end{aligned}
	\end{multline}

	For the first term of \eqref{eq:rankin-4}, we apply Lemma~\ref{lem:stirling} and use \eqref{eq:positivity} to drop the contribution of all nontrivial zeros. It follows that
	\begin{equation}\label{eq:case1-term1}
		\begin{aligned}
			-\frac{L'}{L}(\sigma, \pi \times \tilde{\pi}) %&\le \frac{1}{\sigma}+ \frac{1}{\sigma-1} -\frac{\Cr{hadamard}}{2}+ \frac{1}{2}\log \mathfrak{C}(\pi \times \tilde{\pi})\\ &
            \le \frac{1}{\sigma-1}
			+\Big(1-\frac{\Cr{hadamard}}{2}\Big) + \frac{1}{2}\log \mathfrak{C}(\pi \times \tilde{\pi}).
		\end{aligned}
	\end{equation}
	A similar conclusion can be derived for the second term of \eqref{eq:rankin-4}, namely, 
	\begin{equation}\label{eq:case1-term2}
		\begin{aligned}
			-2\frac{L'}{L}(\sigma, \pi' \times \tilde{\pi}') \le \frac{2}{\sigma-1} + (2-\Cr{hadamard})+ \log \mathfrak{C}(\pi' \times \tilde{\pi}').
		\end{aligned}
	\end{equation}
	For the third term of \eqref{eq:rankin-4}, we apply Lemma~\ref{lem:stirling} and then \eqref{eq:positivity} to drop the contribution of all nontrivial zeros except for $\rho = \beta+i\gamma$. It follows that
	\begin{equation}\label{eq:case1-term4}
		\begin{aligned}
			- 4\Real\Big(\frac{L'}{L}(\sigma + i\gamma, \pi \times \pi') \Big) \le -\frac{4}{\sigma-\beta} -2\Cr{hadamard} +  2\log \mathfrak{C}(i\gamma, \pi \times \pi').
		\end{aligned}
	\end{equation}
	For the last term of \eqref{eq:rankin-4}, using Lemma~\ref{lem:stirling} and \eqref{eq:positivity} and the fact that $\pi \neq \tilde{\pi}$, we obtain
    \begin{equation}\label{eq:case1-term3}
		\begin{aligned}
			-\Real\Big(\frac{L'}{L}(\sigma + 2i\gamma, \pi \times \pi) \Big) \le  -\frac{\Cr{hadamard}}{2} + \frac{1}{2}\log \mathfrak{C}(2i\gamma, \pi \times \pi).
		\end{aligned}
	\end{equation} 
   Inserting \eqref{eq:case1-term1}--\eqref{eq:case1-term3} in \eqref{eq:rankin-4} and applying Lemma~\ref{lem:brumley} and that $3-4\Cr{hadamard} < 0$, we have
	\begin{equation}\label{eq:case1-combined}
		\begin{aligned}
			0 &\le \frac{3}{\sigma-1}-\frac{4}{\sigma-\beta} + \mathcal{L}_2
            %2(m+m')\log(\mathfrak{C}(\pi) \mathfrak{C}(\pi')(3+|\gamma|)^{m[F:\Q]}.
		\end{aligned}
	\end{equation}
    Let $\Cl[abcon]{sigma1}>0$ denote a sufficiently large constant, and choose
    $\sigma = 1 + {1}/({\Cr{sigma1}\mathcal{L}_2})$.     %\begin{equation}\label{eq:sigma-thm2}
    %\sigma = 1 + {1}/({\Cr{sigma1}\mathcal{L}_2}).
    %\end{equation}
    Using \eqref{eq:assumption-beta-RS1} and the choice of $\sigma$, we contradict \eqref{eq:case1-combined} when
    \[
    \Cr{zfr1} >  \min_{\Cr{sigma1} > 1} \frac{3\Cr{sigma1}^2+\Cr{sigma1}}{\Cr{sigma1}-1} =  13.9282\ldots
    \]
    In conclusion, Theorem~\ref{thm1:ZFR-rankin-selberg} follows once we take $\Cr{zfr1} = 14$.
	
	\subsection{Proof of Theorem~\ref{thm2:ZFR-rankin-selberg}}\label{subsec:thm3}

	Let $L(s, \pi \times \pi') = L(s, \tilde{\pi} \times \tilde{\pi}')$ and %and $L(s, \pi\times \pi')$ be entire. 
    $\rho = \beta +i\gamma$ be a nontrivial zero of $L(s, \pi \times \pi')$. Assume that for any constant $\Cl[abcon]{zfr2} > 0$, we have
    \begin{equation}\label{eq:assumption-beta-RS2}
		\begin{aligned}
			\beta
			\ge 1- \frac{1}{\Cr{zfr2}\mathcal{L}_3}, \qquad \mathcal{L}_3 = 2(m+m')\log(\mathfrak{C}(\pi) \mathfrak{C}(\pi')(3+|\gamma|)^{m[F:\Q]}).
		\end{aligned}
	\end{equation}
	We will show that \eqref{eq:assumption-beta-RS2} does not hold for sufficiently large $\Cr{zfr2}$.
    
    We begin with the triangle inequality
	\begin{equation}
		\label{eq:triangle-ineq-2.1}
		\mathbb{D}_{\sigma}(a_{\pi}(\kn)\N\kn^{-i\gamma}, a_{\pi}(\kn)\N\kn^{i\gamma}) \le \mathbb{D}_{\sigma}(a_{\pi}(\kn)\N\kn^{-i\gamma}, -\overline{a_{\pi'}(\kn)}) +
		\mathbb{D}_{\sigma}(-\overline{a_{\pi'}(\kn)}, a_{\pi}(\kn)\N\kn^{i\gamma}).
	\end{equation}
    Squaring both sides of \eqref{eq:triangle-ineq-2.1} and applying the inequality of arithmetic and geometric means, we obtain
    \begin{equation*}
        \label{eq:triangle-ineq-2.2}
        \mathbb{D}_{\sigma}(a_{\pi}(\kn)\N\kn^{-i\gamma}, a_{\pi}(\kn)\N\kn^{i\gamma})^2 \le 2\mathbb{D}_{\sigma}(a_{\pi}(\kn)\N\kn^{-i\gamma}, -\overline{a_{\pi'}(\kn)})^2 +
		2\mathbb{D}_{\sigma}(-\overline{a_{\pi'}(\kn)}, a_{\pi}(\kn)\N\kn^{i\gamma})^2.
    \end{equation*}
    Using \eqref{eq:gen-distance-2}  and  that $L(s, \pi \times \pi') = L(s, \tilde{\pi} \times \tilde{\pi}')$, we express the above inequality as
	\begin{multline}
		\label{eq:rankin-6}
		%\begin{aligned}
			0 \le -\frac{L'}{L}(\sigma, \pi \times \tilde{\pi}) -2\frac{L'}{L}(\sigma, \pi' \times \tilde{\pi}')  - 4\Real\Big(\frac{L'}{L}(\sigma + i\gamma, \pi \times \pi') \Big)\\
            -   \Real\Big(\frac{L'}{L}(\sigma + 2i\gamma, \pi \times \tilde{\pi}) \Big).
		%\end{aligned}
	\end{multline}
   We then consider the following two cases.
    \subsubsection{Case 1:  \texorpdfstring{$L(s,\pi\times \pi')$}{en} is entire}\label{subsubsec:entire}
	In this case, the first, second, and third terms of \eqref{eq:rankin-6} can be estimated exactly as in \eqref{eq:case1-term1}, \eqref{eq:case1-term2}, and \eqref{eq:case1-term4}, respectively.  
	For the fourth term of \eqref{eq:rankin-6}, since $L(s, \pi \times \tilde{\pi})$ has a pole at $s=1$, upon using Lemma~\ref{lem:stirling} and \eqref{eq:positivity}, we have
	\begin{equation}\label{eq:case3-term3}
		\begin{aligned}
			-\Real\Big(\frac{L'}{L}(\sigma + 2i\gamma, \pi \times \tilde{\pi}) \Big) 
			&\le \frac{\sigma-1}{(\sigma-1)^2+4\gamma^2} + \Big(1-\frac{\Cr{hadamard}}{2}\Big)+ \frac{1}{2}\log \mathfrak{C}(2i\gamma, \pi \times \tilde{\pi}).
		\end{aligned}
	\end{equation}
    Let $\Cl[abcon]{sigma2}$ denote a sufficiently large positive constant. %and choose
    %\begin{equation}\label{eq:sigma-thm3}
    %\sigma = 1 + {1}/({\Cr{sigma2}\mathcal{L}_3}).
    %\end{equation} 
    We consider the following three scenarios. %{\color{red} The number will be better if we don't fix the value of sigma in the beginning, see the proof of theorem 3 below.}
    \begin{enumerate}[ leftmargin=1cm]
        \item\label{thm3-case1.1} Suppose that $|\gamma| \ge {2}/({\Cr{sigma2}\mathcal{L}_3})$ and choose $\sigma = 1+ 1/({\Cr{sigma2}\mathcal{L}_3})$. 
        Then $|\gamma| \ge 2(\sigma-1)$, and the first term on the right-hand side of \eqref{eq:case3-term3} is bounded above by $\frac{52}{17(\sigma-1)}$.
	%\[
	%\frac{\sigma-1}{(\sigma-1)^2+4\gamma^2} \le \frac{0.2}{\sigma-1}.
	%\]
	Inserting \eqref{eq:case1-term1}--\eqref{eq:case1-term4} and \eqref{eq:case3-term3} in \eqref{eq:rankin-6} and applying Lemma~\ref{lem:brumley} and the fact that $4-4\Cr{hadamard} <0$, we have
	\begin{equation}\label{eq:4.3.1}
	    0 \le \frac{52}{17(\sigma-1)}-\frac{4}{\sigma-\beta}+\mathcal{L}_3.
	\end{equation}
     Using \eqref{eq:assumption-beta-RS2} and our choice of $\sigma$, we contradict \eqref{eq:4.3.1} once taking 
     \begin{equation}\label{eq:minimizer-rankin-1}
         \Cr{zfr2} > \min_{\Cr{sigma2} > 17/16} \frac{52\Cr{sigma2}^2+17\Cr{sigma2}}{16\Cr{sigma2}-17} = 15.8663\ldots,
     \end{equation}
        where the minimum is attained at $\Cr{sigma2} = 2.2775\ldots$
   
	%%%%%%%%%%%%%%%%%%%%%%%%%%%%
	%%%%%%%%%%%%%%%%%%%%%%%%%%%%
	%%%%%%%%%%%%%%%%%%%%%%%%%%%%

    \item  Suppose that $0 < |\gamma| \le {2}/({\Cr{sigma2}\mathcal{L}_3})$.
	Since $\rho$ is a zero of $L(s, \pi \times \pi')$ and  $L(s, \pi \times \pi') = L(s, \tilde{\pi} \times \tilde{\pi}')$, it follows that $\overline{\rho}$ and $\rho$ are distinct zeros of  $L(s, \pi \times \pi')$. Using Lemma~\ref{lem:stirling} and \eqref{eq:positivity}, we can bound the third term in \eqref{eq:rankin-6} more tightly, namely,
	\begin{multline}\label{eq:case3.3-term5}
		%\begin{aligned}
			- 4\Real\Big(\frac{L'}{L}(\sigma + i\gamma, \pi \times \pi') \Big) \le -\frac{4}{\sigma-\beta}-\frac{4(\sigma-\beta)}{(\sigma-\beta)^2+4\gamma^2}-2\Cr{hadamard}
            + 2\log \mathfrak{C}(i\gamma, \pi \times \pi').
		%\end{aligned}
	\end{multline}
    For the fourth term of \eqref{eq:rankin-6}, it follows from  \eqref{eq:case3-term3} and the fact that $|\gamma| >0$ that
    \begin{equation}\label{eq:case3.2-term4.1}
		-\Real\Big(\frac{L'}{L}(\sigma + 2i\gamma, \pi \times \tilde{\pi}) \Big) \le  \frac{1}{\sigma-1} + \Big(1-\frac{\Cr{hadamard}}{2}\Big) + \frac{1}{2}\log \mathfrak{C}(2i\gamma, \pi \times \pi).
	\end{equation}
    By inserting \eqref{eq:case1-term1}, \eqref{eq:case1-term2}, \eqref{eq:case3.3-term5}, and 
    \eqref{eq:case3.2-term4.1} in \eqref{eq:rankin-6} and applying Lemma~\ref{lem:brumley} and the fact that $4-4\Cr{hadamard} < 0$, it follows that
    \begin{equation*}
        0 \le \frac{4}{\sigma-1}-\frac{4}{\sigma-\beta} - \frac{4(\sigma-\beta)}{(\sigma-\beta)^2+4\gamma^2} + \mathcal{L}_3. %+2(m+m')\log(\mathfrak{C}(\pi) \mathfrak{C}(\pi')(3+|\gamma|)^{m[F:\Q]}).
    \end{equation*}
    Choosing $\sigma = 1 + 4/(\Cr{sigma2}\mathcal{L}_3)$, we have $|\gamma| < \frac{1}{2}(\sigma - \beta)$. It follows that
    \begin{equation}\label{eq:thm3-case1.2}
    0 \le \frac{4}{\sigma-1}-\frac{6}{\sigma-\beta} + \mathcal{L}_3, %+2(m+m')\log(\mathfrak{C}(\pi) \mathfrak{C}(\pi')(3+|\gamma|)^{m[F:\Q]}),
    \end{equation}
   Using \eqref{eq:assumption-beta-RS2}, our choice of $\sigma$, and the value of $\Cr{sigma2}$ from \eqref{eq:minimizer-rankin-1}, we contradict \eqref{eq:thm3-case1.2} when  \[
   \Cr{zfr2} > \frac{\Cr{sigma2}^2+\Cr{sigma2}}{2\Cr{sigma2}-4} = 13.4489\ldots
   \]
	%For $\Cr{sigma2}$ sufficiently small, we can bound the error term in the above inequality by $\frac{0.5}{\sigma-1}$. Thus we have $\frac{4.8}{\sigma-\beta} \le \frac{4.5}{\sigma-1}$. By the assumption in \eqref{eq:assumption-beta-RS2}, a contradiction arises when 
    %$\Cr{zfr2} < (1/16)\Cr{sigma2}$.
	
    \item Suppose that $\rho$ is a multiple zero. Using Lemma~\ref{lem:stirling} and \eqref{eq:positivity}, we then have a tighter bound for the third term of \eqref{eq:rankin-6}, namely,
	\begin{equation}\label{eq:case3.2-term3}
		\begin{aligned}
			- 4\Real\Big(\frac{L'}{L}(\sigma + i\gamma, \pi \times \pi') \Big) \le -\frac{8}{\sigma-\beta} -2\Cr{hadamard}+ 2\log \mathfrak{C}(i\gamma, \pi \times \pi').
		\end{aligned}
	\end{equation}
	Inserting \eqref{eq:case1-term1},\eqref{eq:case1-term2}, \eqref{eq:case3.2-term4.1}, and \eqref{eq:case3.2-term3} in \eqref{eq:rankin-6} and applying Lemma~\ref{lem:brumley} and the fact that $4-4\Cr{hadamard} < 0$, we have
	\begin{equation}\label{eq:thm3-case1.3}
	    0 \le \frac{4}{\sigma-1}-\frac{8}{\sigma-\beta}+ \mathcal{L}_3.  %2(m+m')\log(\mathfrak{C}(\pi) \mathfrak{C}(\pi')(3+|\gamma|)^{m[F:\Q]}).
	\end{equation}
    By choosing $\sigma = 1+1/(\Cr{sigma2}\mathcal{L}_3)$ and using \eqref{eq:assumption-beta-RS2} and the value of $\Cr{sigma2}$ from \eqref{eq:minimizer-rankin-1}, it follows that \eqref{eq:thm3-case1.3} fails when taking \[
    \Cr{zfr2} > \frac{4\Cr{sigma2}^2+\Cr{sigma2}}{4\Cr{sigma2}-1} = 2.8391\ldots
    \]
	%Again, for sufficiently small $\Cr{sigma2}$,  the error term above can be bounded above by $ \frac{0.5}{\sigma-1}$.  Therefore, we have $\frac{8}{\sigma-\beta} \le \frac{4.5}{\sigma-1}$. By the assumption in \eqref{eq:assumption-beta-RS2}, a contradiction arises when $\Cr{zfr2} < ({7}/{9})\Cr{sigma2}$.
	\end{enumerate}

    \subsubsection{Case 2:  
    \texorpdfstring{$L(s, \pi \times \pi')$}{L} is not entire}\label{subsubsec:not-entire}
    %Assume that $L(s, \pi \times \pi') = L(s, \tilde{\pi} \times \tilde{\pi}')$ and $L(s, \pi \times \pi')$ is not entire.
    This means $\pi = \tilde{\pi}'$ and, equivalently, $\tilde{\pi} = \pi'$. Then $L(s, \pi \times \pi')$ has a simple pole at $s =1$. Let $\rho = \beta +i\gamma$ be a nontrivial zero of $L(s, \pi \times \pi')$.
	%Because of the pole at $s=1$,  there must be an absolute constant $\delta \in (0,1]$ such that $|\rho-1| \ge \delta$. In particular, if $|\gamma| < \delta/2$, then $\beta <1- (\sqrt{3}/2) \delta <1- \delta/(3\log 3)$. Thus, under the assumption that $L(s, \pi \times \pi')$ is not entire, a zero-free region of the desired form follows in that range of $\gamma$ provided that $\Cr{zfr2} \le \delta/3$. Henceforth, we assume that $|\gamma| > \delta/2$.
    Using the fact that $\pi = \tilde{\pi}'$ in \eqref{eq:rankin-6}, we obtain
	\begin{equation}
		\label{eq:rankin-7}
		\begin{aligned}
			0 &\le -3\frac{L'}{L}(\sigma, \pi \times \pi') - 4\Real\Big(\frac{L'}{L}(\sigma + i\gamma, \pi \times \pi') \Big) -   \Real\Big(\frac{L'}{L}(\sigma + 2i\gamma, \pi \times \pi') \Big).
		\end{aligned}
	\end{equation}
    For the first term of \eqref{eq:rankin-7}, upon using Lemma~\ref{lem:stirling} and \eqref{eq:positivity}, we have
	\begin{equation}\label{eq:case4-term1}
		\begin{aligned}
			-3\frac{L'}{L}(\sigma, \pi \times \pi') \le \frac{3}{\sigma-1} + \Big(3-\frac{3\Cr{hadamard}}{2}\Big)+ \frac{3}{2}\log \mathfrak{C}(\pi \times \pi').
		\end{aligned}
	\end{equation}
	Similarly, for the second term of \eqref{eq:rankin-7}, we have
	\begin{multline}\label{eq:case4-term2}
		%\begin{aligned}
			- 4\Real\Big(\frac{L'}{L}(\sigma + i\gamma, \pi \times \pi') \Big) \le \frac{4(\sigma-1)}{(\sigma-1)^2+\gamma^2} -\frac{4}{\sigma-\beta} +(4-2\Cr{hadamard})
            + 2\log \mathfrak{C}(i\gamma, \pi \times \pi').
		%\end{aligned}
	\end{multline}
	Lastly, for the third term of \eqref{eq:rankin-7}, we obtain
	\begin{equation}\label{eq:case4-term3}
		\begin{aligned}
			-\Real\Big(\frac{L'}{L}(\sigma + 2i\gamma, \pi \times \pi') \Big) &\le  \frac{\sigma-1}{(\sigma-1)^2+4\gamma^2} +\Big(1-\frac{\Cr{hadamard}}{2}\Big)+ \frac{1}{2}\log \mathfrak{C}(2i\gamma, \pi \times \pi').
		\end{aligned}
	\end{equation}
	Putting together \eqref{eq:case4-term1}--\eqref{eq:case4-term3} in \eqref{eq:rankin-7} and applying Lemma~\ref{lem:brumley} along with the fact that $\pi' = \tilde{\pi}$, we obtain
	\begin{equation}
		\label{eq:rankin-9}
		0 \le \frac{3}{\sigma-1} +\frac{4(\sigma-1)}{(\sigma-1)^2+\gamma^2} + \frac{\sigma-1}{(\sigma-1)^2+4\gamma^2}  -\frac{4}{\sigma-\beta}
        +  (8-4\Cr{hadamard}) + \mathcal{L}_3.  %2(m+m')\log(\mathfrak{C}(\pi)(3+|\gamma|)^{m[F:\Q]}).
	\end{equation}
    %Using the notation $\tilde{\mathcal{L}}_3$, \eqref{eq:assumption-beta-RS2} and \eqref{eq:sigma-thm3} becomes
    %\begin{equation}
     %   \beta \ge 1-\frac{1}{\Cr{zfr2}\mathcal{L}_3}, \qquad \sigma = 1+\frac{1}{\Cr{sigma2}\mathcal{L}_3}.
    %\end{equation}
    Let $\Cl[abcon]{sigma3}>0$ denote a sufficiently large constant. %and choose
    %\begin{equation}\label{eq:sigma-thm3-2}
    %\sigma = 1 + {1}/({\Cr{sigma3}\mathcal{L}_3}).
    %\end{equation} 
    We consider the following three scenarios.
    \begin{enumerate}[ leftmargin=1cm]
        \item\label{thm3-case2.1}  Suppose that
        $|\gamma| \ge {4}/({\Cr{sigma3}\mathcal{L}_3})$. %$|\gamma| \ge {2}/({\Cr{sigma3}\mathcal{L}_3})$.
        Using the definition of $\mathcal{L}_3$ in \eqref{eq:assumption-beta-RS2}, the last two terms of \eqref{eq:rankin-9} can be bounded above by
    \[
    \Big(\frac{8-4\Cr{hadamard}}{4\log 3} + 1 \Big) \mathcal{L}_3 \le 1.2531\mathcal{L}_3 := \kappa\mathcal{L}_3. %:= \tilde{\mathcal{L}}_3. 
    \]
    Choosing $\sigma = 1 + {1}/({\Cr{sigma3}\mathcal{L}_3})$, we have $\gamma \ge 4(\sigma-1)$. It follows from \eqref{eq:rankin-9} that
    \begin{equation}\label{eq:rankin-9-2}
        0 \le \frac{3592}{1105(\sigma-1)} -\frac{4}{\sigma-\beta}
        +  \kappa\mathcal{L}_3.
    \end{equation}
    Using \eqref{eq:assumption-beta-RS2} and our choice of $\sigma$, we contradict \eqref{eq:rankin-9-2} when 
    \begin{equation}\label{eq:minimizer-rankin-2}
        \Cr{zfr2} >  \min_{\Cr{sigma3} > 1105\kappa/828} \frac{3592\Cr{sigma3}^2+1105\kappa\Cr{sigma3}}{828\Cr{sigma3}-1105\kappa} =  32.2770\ldots,
    \end{equation}
    %\Cr{zfr2} >  \min_{\Cr{sigma3} > 85k/12} \frac{({328}/{85})\Cr{sigma3}^2+k\Cr{sigma3}}{({12}/{85})\Cr{sigma3}-k} =  1029.42826\ldots
    where the minimum is attained when $\Cr{sigma3} = 3.5273\ldots$
    
    \item\label{thm3-case2.2} Suppose that $0 < |\gamma| < {4}/({\Cr{sigma3}\mathcal{L}_3})$. We consider $-(L'/L)(\sigma, \pi \times \pi')$ and apply Lemma~\ref{lem:stirling} and  \eqref{eq:positivity}. Since $L(s,\pi \times \pi')$ is self-dual and $\gamma \neq 0$, then $\overline{\rho}$ is another zero of $L(s,\pi \times \pi')$. It follows that 
    \begin{equation}\label{eq:thm3-case2.2}
        -\frac{L'}{L}(\sigma, \pi \times \pi') \le \frac{1}{\sigma-1} - \frac{2(\sigma-\beta)}{(\sigma-\beta)^2+\gamma^2} + \Big(1-\frac{\Cr{hadamard}}{2}\Big)+ \frac{1}{2}\log \mathfrak{C}(\pi \times \pi').
    \end{equation}
        By the definition of $\mathcal{L}_3$, the last two terms of \eqref{eq:thm3-case2.2} can be bounded above by
        \[
        \Big(\frac{2-\Cr{hadamard}}{8\log 3} + \frac{1}{8}\Big) \mathcal{L}_3 \le 0.1567\mathcal{L}_3 := \kappa'\mathcal{L}_3. %:= \tilde{\mathcal{L}}_3. 
        \]
         Since $\pi' = \tilde{\pi}$, then $-(L'/L)(\sigma, \pi \times \pi') \ge 0$ when $\sigma > 1$. Choosing $\sigma = 1 + {8}/({\Cr{sigma3}\mathcal{L}_3})$, it follows that  $|\gamma| < \frac{1}{2}|\sigma-\beta|$. It then follows from \eqref{eq:thm3-case2.2} that
        \begin{equation}\label{eq:rankin-11}
         0 \le \frac{1}{\sigma-1} -\frac{8}{5(\sigma-\beta)}
        +  \kappa'\mathcal{L}_3.
         %0 \le \frac{\Cr{sigma3}}{8}- \frac{8}{5}\Big(\frac{\Cr{zfr2}\Cr{sigma3}}{8\Cr{zfr2}+\Cr{sigma3}}\Big)+k'   
        \end{equation}
        Using \eqref{eq:assumption-beta-RS2}, our choice of $\sigma$, and the value of $\Cr{sigma3}$ from \eqref{eq:minimizer-rankin-2}, we contradict \eqref{eq:rankin-11} when 
        \[
        \Cr{zfr2} > \frac{5\Cr{sigma3}^2+40\kappa'\Cr{sigma3}}{24\Cr{sigma3}-320\kappa'} =  2.4431\ldots
        %\Cr{zfr2} >  \min_{\Cr{sigma3} > 20k'/3} \frac{5\Cr{sigma3}^2+20k'\Cr{sigma3}}{12\Cr{sigma3}-80k'} =  2.325376\ldots
        \]
        Hence, taking $\Cr{zfr2} = 33$, we have proved a zero-free region for Cases \eqref{thm3-case2.1} and \eqref{thm3-case2.2}.
    
        \item 
        
        Lastly, we handle potential real zeros of $L(s, \pi \times \pi')$.
        Let $N$ be the number of all real zeros, counted with multiplicity, in the range $1- 1/(33\mathcal{L}_3) \le \beta \le 1$. Again, we consider $-(L'/L)(\sigma, \pi \times \pi')$ and use Lemma~\ref{lem:stirling}, \eqref{eq:positivity}, and the fact that $-(L'/L)(\sigma, \pi \times \pi') \ge 0$ when $\sigma >1$. It follows that
        \[
        0 \le \frac{1}{\sigma-1}-\frac{N}{\sigma-\beta}+\kappa'\mathcal{L}_3.
        \]
        Using the same choice of $\sigma$ as in Case \eqref{thm3-case2.2}, we obtain
        \[
        0 \le \frac{\Cr{sigma3}}{8}-\frac{N}{8/\Cr{sigma3}+1/33}+\kappa',
        \]
        which, upon using the value of $\Cr{sigma3}$ from \eqref{eq:minimizer-rankin-2}, implies that $N < 2$. Thus there is at most one real simple zero.
    \end{enumerate}
    In conclusion, taking $\Cr{zfr2} = 33$ ensures a contradiction in all cases, whether $L(s, \pi \times \pi')$ is entire or not.  Theorem~\ref{thm2:ZFR-rankin-selberg} thus follows.

    \subsection{Analysis of the metric limitation}\label{sec:limitation-proof}

We now extend our discussion from Section~\ref{subsec:limitations-intro} by demonstrating that when $\pi$, $\pi'$, and $L(s,\pi \times \pi')$ are non-self-dual, this metric framework cannot yield a classical zero-free region. Specifically, we explain why the criterion \eqref{eq:c_P-c_Z-condition} fails in this setting.

For $j \in \{1, 2, 3\}$, we define $x_j(\mathfrak{n}) := \delta_j a_{\pi_j}(\mathfrak{n}) \mathrm{N}\mathfrak{n}^{i\gamma_j}$, where $\gamma_j, \delta_j  \in \mathbb{R}$. Allowing $\delta_j$ to be arbitrary real numbers, rather than restricting them to $\{\pm 1\}$, optimizes flexibility while preserving the metric structure. We begin with an arbitrary triangle inequality of the form \eqref{eq:triangle-D-sigma}, which, via the inequality of arithmetic and geometric means, yields 
\begin{equation}\label{eq:triangle-limit}
    \mathbb{D}_{\sigma}(x_1(\mathfrak{n}), x_2(\mathfrak{n}))^2 \le 2\mathbb{D}_{\sigma}(x_1(\mathfrak{n}), x_3(\mathfrak{n}))^2 + 2\mathbb{D}_{\sigma}(x_3(\mathfrak{n}), x_2(\mathfrak{n}))^2. 
\end{equation}
Assume for simplicity that $\N\mathfrak{q}_{\pi}=\N\mathfrak{q}_{\pi'}=1$. Then $\pi_1,\pi_2,\pi_3$ must also have level 1; otherwise, their corresponding Rankin--Selberg convolutions cannot be isomorphic to $L(s,\pi\times\pi')$, and hence do not contribute to the target zero coefficient $c_Z$ in \eqref{eq:desired form-bino}. We further adopt the standard assumption that $L(s,\pi\times\pi')$ is irreducible, namely that it does not factor into a product of lower-degree $L$-functions unless $\pi' \cong \widetilde{\pi}$.

Applying the identity \eqref{eq:gen-distance-3} to \eqref{eq:triangle-limit}, we obtain
\begin{multline*}
0 \le -\frac{1}{2} \delta_1^2\frac{L'}{L}(\sigma, \pi_1\times\widetilde{\pi}_1) - 2\delta_3^2\frac{L'}{L}(\sigma, \pi_3\times\widetilde{\pi}_3)-\frac{1}{2} \delta_2^2\frac{L'}{L}(\sigma, \pi_2\times\widetilde{\pi}_2)\\
+2\delta_1\delta_3\Real\Big(\frac{L'}{L}(\sigma +(\gamma_3-\gamma_1)i, \pi_1 \times \widetilde{\pi}_3)\Big)
+2\delta_2\delta_3\Real\Big(\frac{L'}{L}(\sigma +(\gamma_2-\gamma_3)i, \pi_3 \times \widetilde{\pi}_2)\Big)\\
-\delta_1\delta_2\Real\Big(\frac{L'}{L}(\sigma +(\gamma_2-\gamma_1)i, \pi_1 \times \widetilde{\pi}_2)\Big).
\end{multline*}
Crucially, if any two of the convolutions $L(s, \pi_1 \times \widetilde{\pi}_3)$, $L(s, \pi_3 \times \widetilde{\pi}_2)$, or $L(s, \pi_1 \times \widetilde{\pi}_2)$ were to coincide with $L(s, \pi \times \pi')$, it would structurally force at least one of $\pi$, $\pi'$, or $L(s, \pi \times \pi')$ to be self-dual. Because we are in a fully non-self-dual setting, at most one of these cross-term $L$-functions can equal $L(s, \pi \times \pi')$. Consequently, by Lemma~\ref{lem:stirling}, the maximum possible value for the zero-contribution coefficient $c_Z$ is restricted to either $2\delta_1\delta_3$, $2\delta_2\delta_3$, or $-\delta_1\delta_2$ while the polar contribution coefficient satisfies $c_P \ge \frac{1}{2}\delta_1^2 + 2\delta_3^2 + \frac{1}{2}\delta_2^2$. It can be verified that none of these potential values for $c_Z$ can strictly exceed $c_P$, completing the proof of the limitation.
    
	%For sufficiently small $\Cr{sigma2}$, the error term in \eqref{eq:rankin-9} can be bounded above by $\frac{0.5}{\sigma-1}$. Therefore, we have $\frac{4}{\sigma-\beta} \le \frac{3.5}{\sigma-1}$.  By the assumption in \eqref{eq:assumption-beta-RS2}, a contradiction arises when $
	%\Cr{zfr2} < (1/7)\Cr{sigma2}$.
    %, which yields \[
    %\beta < 1- \frac{\Cr{sigma2}}{7m^2\log(\mathfrak{C}(\pi)(3+|\gamma|))}.
    %\]
	
    %In conclusion, by the works of Sections~\ref{subsubsec:entire} and~\ref{subsubsec:not-entire},  we take 
    %\[
    %\Cr{zfr2} = \min \left\{\frac{0.3}{3.7}\Cr{sigma2}, \frac{1}{16}\Cr{sigma2}, \frac{7}{9}\Cr{sigma2}, \frac{\delta}{3}, \frac{1}{7}\Cr{sigma2}\right\},
    %\]
    %which ensures that a contradiction arises in each of the above cases. Theorem~\ref{thm2:ZFR-rankin-selberg} thus follows.

	%{\color{red} rewrite the way we write it here + check the m,m' dependence, abstract.}
	
    \section{Proofs of Theorems~\ref{thm1:ZFR-cuspforms}--\ref{thm2:ZFR-rankin-selberg} when \texorpdfstring{$\N\kq_{\pi} > 1$}{N2} or \texorpdfstring{$\N\kq_{\pi'} > 1$}{N3}}\label{sec:proof-level>1}

    %The form \eqref{eq:gen-distance} works well when $\N\kq_\pi = \N\kq_{\pi'}= 1$ as we have seen in Section~\ref{sec:proof-level1}. However, when either $\N\kq_{\pi}$ or $\N\kq_{\pi'}$ is greater than one, contributions from ramified primes arise, as  $a_{\pi\times\pi'}(\kp^k)$ is not necessarily equal to $a_{\pi}(\kp^k)a_{\pi'}(\kp^k)$. While these contributions can be handled using progress towards the generalized Ramanujan conjecture, doing so introduces a significant dependence on $m, m', \kq_{\pi}$ and $\kq_{\pi'}$. To address this issue, we modify the quantity \eqref{eq:gen-distance} for when $\N\kq_\pi > 1$ or $\N\kq_{\pi'}>1$. Specifically, for integers $m_1, m_2 \ge 1$,  let $\pi_1 \in \mathfrak{F}_{m_1}^*$, $\pi_2 \in \mathfrak{F}_{m_2}^*$, $\gamma_1,\gamma_2 \in \mathbb{R}$, and $\delta_1, \delta_2 \in \{\pm1\}$. We define
    %\begin{multline}\label{eq:def-d-leveln}
    %\mathbb{D}^*_{\sigma} ((\pi_1, \gamma_1, \delta_1), (\pi_2, \gamma_2, \delta_2))\\
    %:= \sqrt{\frac{1}{2}\Big(-\frac{L'}{L}(\sigma, \pi_1 \times \tilde{\pi}_1) -\frac{L'}{L}(\sigma, \pi_2 \times \tilde{\pi}_2) +2\delta_1\delta_2 \Real\Big(\frac{L'}{L}(\sigma+(\gamma_2-\gamma_1)i, \pi_1 \times \tilde{\pi}_2) \Big)\Big)}.
    %\end{multline} 
        
    In the previous section, we adopted the family of metrics defined in \eqref{eq:gen-distance} to prove Theorems~\ref{thm1:ZFR-cuspforms}--\ref{thm2:ZFR-rankin-selberg} under the assumption that $\N\kq_\pi  = \N\kq_{\pi'} =1$. If we assume that $\N\kq_{\pi} > 1$ or $\N\kq_{\pi'} > 1$, the square of \eqref{eq:gen-distance} %any quantity of the form $\mathbb{D}_{\sigma}(\delta_1a_{\pi_1}(\kn)\N\kn^{-i\gamma_1},\delta_2 a_{\pi_2}(\kn)\N\kn^{-i\gamma_2})^2$ 
    %with $\pi_1, \pi_2 \in \{\pi, \pi', \tilde{\pi}, \tilde{\pi}'\}$ 
    can be expressed as in \eqref{eq:gen-distance-3}, which includes additional contributions from ramified primes of the form  $E(\sigma+i\gamma, \pi_1 \times \pi_2)$. %{\color{red}Unsightly line break}
    To estimate these extra contributions, we can apply Lemma~\ref{lem:general-error}, which relies on progress towards the generalized Ramanujan conjecture. However, this results in narrower zero-free regions than those in Theorems~\ref{thm1:ZFR-cuspforms}--\ref{thm2:ZFR-rankin-selberg}. For instance, the zero-free region in Theorem~\ref{thm1:ZFR-cuspforms} would take the form
    \begin{equation*}
        \sigma < 1- \frac{1}{\Cr{zfr0}\big[m\log(\mathfrak{C}(\pi)(3+|t|)^{m[F:\Q]})+\frac{m^2}{1-2\theta_{m}}\omega(\kq_{\pi})\big]},
    \end{equation*}
    for some absolute constant $\Cr{zfr0}$.
   The extra term in the denominator, which comes from the application of Lemma~\ref{lem:general-error},  introduces a significant dependence on $m,m'$ and $\kq_{\pi}$.
    
    To eliminate this extra term,  %{\color{red}Talk about why this elimination matters}, 
    we define a new quantity as follows. For integers $m_1, m_2 \ge 1$,  let $\pi_1 \in \mathfrak{F}_{m_1}^*$, $\pi_2 \in \mathfrak{F}_{m_2}^*$, $\gamma_1,\gamma_2 \in \mathbb{R}$, and $\delta_1, \delta_2 \in \{\pm1\}$. We define
    \begin{multline}\label{eq:def-d-leveln}
    \mathbb{D}^*_{\sigma} ((\pi_1, \gamma_1, \delta_1), (\pi_2, \gamma_2, \delta_2))\\
    := \sqrt{\frac{1}{2}\Big[-\frac{L'}{L}(\sigma, \pi_1 \times \tilde{\pi}_1) -\frac{L'}{L}(\sigma, \pi_2 \times \tilde{\pi}_2) +2\delta_1\delta_2 \Real\Big(\frac{L'}{L}(\sigma+(\gamma_2-\gamma_1)i, \pi_1 \times \tilde{\pi}_2) \Big)\Big]},
		\end{multline} 
    where $\sigma$ is a parameter to be optimized later.
    The square of this quantity is essentially identical to \eqref{eq:gen-distance-3}, except for the exclusion of the terms of the form $E(\sigma+i\gamma, \pi_1 \times \pi_2)$. In other words, when $\N\kq_\pi  = \N\kq_{\pi'} =1$ and $\pi_1, \pi_2 \in \{\pi, \pi', \tilde{\pi}, \tilde{\pi}'\}$, we have
    \[
    \mathbb{D}^*_{\sigma} ((\pi_1, \gamma_1, \delta_1), (\pi_2, \gamma_2, \delta_2))^2 = \mathbb{D}_{\sigma}(\delta_1a_{\pi_1}(\kn)\N\kn^{i\gamma_1},\delta_2 a_{\pi_2}(\kn)\N\kn^{i\gamma_2})^2.
    \]

    Our motivation for considering the quantity in \eqref{eq:def-d-leveln} is to eliminate contributions from ramified primes at the outset, ensuring that they do not appear in any triangle inequalities of interest.  For positive integers $m_1, m_2, m_3$, let $\pi_1 \in \mathfrak{F}_{m_1}^*$, $\pi_2 \in \mathfrak{F}_{m_2}^*$, $\pi_3 \in \mathfrak{F}_{m_3}^*$, $\gamma_1, \gamma_{2}, \gamma_{3} \in \mathbb{R}$, and $\delta_1, \delta_{2}, \delta_{3} \in \{\pm1\}$. 
    %It remains to verify that $\mathbb{D}^*_{\sigma}((\pi_1,\delta_1, \gamma_1), (\pi_2,\delta_2, \gamma_2))$ continues to satisfy the triangle inequality in of the following form: \begin{multline}\label{eq:triangle-d*}
   %\mathbb{D}^*_{\sigma}((\pi_1,\delta_1, \gamma_1), (\pi_2,\delta_2, \gamma_2))^2\\ \le 2\mathbb{D}^*_{\sigma}((\pi_1,\delta_1, \gamma_1), (\pi_3,\delta_3, \gamma_3))^2 + 2\mathbb{D}^*_{\sigma}((\pi_3,\delta_3, \gamma_3), (\pi_2,\delta_2, \gamma_2))^2.
%\end{multline}
    %To this end, we invoke the following proposition.
  In what follows, we shall establish that $\mathbb{D}^*_{\sigma}((\pi_1, \gamma_1, \delta_1), (\pi_2, \gamma_2, \delta_2))$ is well-defined and non-negative, and that the following inequality holds:
  \begin{multline}\label{eq:triangle-d*}
   \mathbb{D}^*_{\sigma}((\pi_1, \gamma_1, \delta_1), (\pi_2, \gamma_2, \delta_2))^2\\ \le 2\mathbb{D}^*_{\sigma}((\pi_1, \gamma_1, \delta_1), (\pi_3, \gamma_3, \delta_3))^2 + 2\mathbb{D}^*_{\sigma}((\pi_3, \gamma_3, \delta_3), (\pi_2, \gamma_2,\delta_2))^2.
\end{multline}
\begin{rek}
    In Section~\ref{sec:proof-level1}, we used the triangle inequality \eqref{eq:triangle-D-sigma} to construct a non-negative linear combination of logarithmic derivatives of $L$-functions. In this section, we instead rely on  \eqref{eq:triangle-d*}, which is not a triangle inequality in the usual sense. In fact, when $ \mathrm{N}\mathfrak{q}_\pi = \mathrm{N}\mathfrak{q}_{\pi'} = 1$, \eqref{eq:triangle-d*} is equivalent to squaring \eqref{eq:triangle-D-sigma} and applying the arithmetic--geometric mean inequality. This weaker form, however,  suffices to construct the desired non-negative linear combinations.
\end{rek}
    To this end, we exploit the positive semi-definiteness of Rankin--Selberg $L$-functions. Following the approach in \cite{LP}, we first define positive semi-definite families of $L$-functions.

   \begin{defi}\label{def:nonneg-def}
Let $\mathcal{I}$ be a finite ordered sequence. For $i, j \in \mathcal{I}$, let $L_{i,j}(s) = \sum_{\kn} a_{i,j}(\kn)\, \N\kn^{-s}$ be a formal Dirichlet series with complex coefficients.
We say that the family $(L_{i,j}(s))_{i,j \in \mathcal{I}}$ is \emph{positive semi-definite} if and only if for any $\kn$, the matrix $M \in \mathbb{C}^{\mathcal{I} \times\mathcal{I}}$ with entries
\[
    M_{i, j} := a_{i, j}(\kn)
\]
is Hermitian and positive semi-definite. When applied to complex $L$-functions, this definition refers to their Dirichlet expansions in
$\Re(s) > \sigma$, for large enough $\sigma$.
\end{defi}
    \begin{prop}\label{prop:Lichtman-Pascadi} Recall that $\mathfrak{F}_m$ is the set of all cuspidal automorphic representations $\pi$ of $\GL_m(\mathbb{A}_{F})$ with unitary central character. For any finite subset $\mathfrak{F} \subseteq \cup_{m=1}^{\infty} \mathfrak{F}_m$, the family $( -(L'/L)(s, \pi \times \tilde{\pi}'))_{\pi,\pi' \in \mathfrak{F}}$ is positive semi-definite. In particular, for any $\omega_{\pi}, \omega_{\pi'} \in \mathbb{C}$ and $\sigma > 1$, we have
    \begin{equation*}
        \sum_{\pi, \pi' \in \mathfrak{F}} -\omega_{\pi}\overline{\omega}_{\pi'} \frac{L'}{L}(\sigma,\pi \times\tilde{\pi}') \ge 0.
    \end{equation*}
    \end{prop}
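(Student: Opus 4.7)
The plan is to verify positive semi-definiteness directly from Definition~\ref{def:nonneg-def}. Since $-(L'/L)(s,\pi\times\tilde{\pi}')$ is supported on prime powers by \eqref{eqn:log_deriv}, it suffices to show that for every prime ideal $\kp$ and every $k\ge 1$, the matrix $M(\kp^k)\in\mathbb{C}^{\mathfrak{F}\times\mathfrak{F}}$ with entries $M(\kp^k)_{\pi,\pi'}=a_{\pi\times\tilde{\pi}'}(\kp^k)$ is Hermitian and positive semi-definite. Given this, the displayed ``in particular'' bound is immediate: by \eqref{eqn:log_deriv}, its left-hand side equals
\[
\sum_{\kp}\sum_{k=1}^{\infty}\frac{\log\N\kp}{\N\kp^{k\sigma}}\sum_{\pi,\pi'\in\mathfrak{F}}\omega_{\pi}\overline{\omega_{\pi'}}M(\kp^k)_{\pi,\pi'},
\]
and the inner quadratic form is a nonnegative real number for each $\kp^k$. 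Absolute convergence on $\Real(s)>1$, inherited from the Euler products of the individual $L(s,\pi\times\tilde{\pi}')$, justifies the interchange of summation.

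For an unramified prime $\kp\nmid\kq_{\pi}\kq_{\pi'}$, the first case of \eqref{eq:a-rankin-selberg} together with the identity $a_{\tilde{\pi}'}(\kp^k)=\overline{a_{\pi'}(\kp^k)}$ (a consequence of $\{\alpha_{j,\tilde{\pi}'}(\kp)\}=\{\overline{\alpha_{j,\pi'}(\kp)}\}$) yields the rank-one factorization
\[
a_{\pi\times\tilde{\pi}'}(\kp^k)=a_{\pi}(\kp^k)\overline{a_{\pi'}(\kp^k)}.
\]
Thus $M(\kp^k)$ equals the outer product $vv^*$ with $v\in\mathbb{C}^{\mathfrak{F}}$ defined by $v_\pi=a_\pi(\kp^k)$, which is manifestly Hermitian and positive semi-definite, and the quadratic form collapses to $\big|\sum_{\pi\in\mathfrak{F}}\omega_\pi a_\pi(\kp^k)\big|^2\ge 0$.

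The essential difficulty lies at ramified primes $\kp\mid\kq_{\pi}\kq_{\pi'}$, where the Rankin--Selberg Satake parameters $\alpha_{j,j',\pi\times\tilde{\pi}'}(\kp)$ governing the second case of \eqref{eq:a-rankin-selberg} do not split as tensor products of local parameters of $\pi$ and $\pi'$ in any elementary way. Following Lichtman--Pascadi \cite{LP}, the plan is to pass through the local Langlands correspondence for $\GL_m$ over $F_\kp$ (the relevant dictionary is recorded in the appendix of \cite{ST}): to each $\pi_\kp$ one attaches a Frobenius semisimple Weil--Deligne representation, and the local Langlands compatibility for Rankin--Selberg products identifies $L(s,\pi_\kp\times\tilde{\pi}'_\kp)$ with the local L-factor of the corresponding tensor Weil--Deligne parameter. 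This identification ultimately produces a sequence $(c_{\pi,\kp,k})_{\pi\in\mathfrak{F}}$ depending only on $\pi_\kp$ such that $a_{\pi\times\tilde{\pi}'}(\kp^k)=c_{\pi,\kp,k}\overline{c_{\pi',\kp,k}}$, presenting $M(\kp^k)$ once more as a rank-one Gram matrix. Threading the interaction between the inertia-invariant subspace and the tensor product so that the Jacquet--Piatetski-Shapiro--Shalika integral-theoretic definition of the local Rankin--Selberg L-factor aligns with this tensor-product formula is the delicate step, and is precisely the content of the construction in \cite{LP} that we invoke.
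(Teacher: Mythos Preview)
Your strategy matches the paper's: both reduce the proposition to the content of \cite{LP} at the ramified primes, and the paper merely adds the cosmetic step of first citing \cite[Proposition~4]{LP} for $(\log L(s,\pi\times\tilde{\pi}'))_{\pi,\pi'}$ and then observing that the Dirichlet coefficient of $-L'/L$ at $\kp^k$ differs from that of $\log L$ by the positive scalar $k\log\N\kp$. Your treatment of the unramified primes is correct and more explicit than the paper's.

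However, your description of what \cite{LP} yields at ramified primes is wrong, and this is a genuine error rather than a stylistic slip. You claim the local Langlands analysis produces scalars $c_{\pi,\kp,k}\in\mathbb{C}$ with $a_{\pi\times\tilde{\pi}'}(\kp^k)=c_{\pi,\kp,k}\overline{c_{\pi',\kp,k}}$, so that $M(\kp^k)$ is rank one. It is not. Take $\mathfrak{F}=\{\pi,\mathbbm{1}\}$ with $\pi_\kp$ the Steinberg representation of $\GL_2(F_\kp)$: one computes $a_{\mathbbm{1}\times\mathbbm{1}}(\kp^k)=1$, $a_{\pi\times\mathbbm{1}}(\kp^k)=a_\pi(\kp^k)$ with $|a_\pi(\kp^k)|^2=\N\kp^{-k}$, while $a_{\pi\times\tilde{\pi}}(\kp^k)=1+\N\kp^{-k}$, so $\det M(\kp^k)=1\neq 0$. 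What the Weil--Deligne argument in \cite{LP} actually produces is a \emph{vector} $v_{\pi,\kp,k}$ in an auxiliary inner-product space (morally indexed by inertial types) such that $a_{\pi\times\tilde{\pi}'}(\kp^k)=\langle v_{\pi,\kp,k},v_{\pi',\kp,k}\rangle$; this exhibits $M(\kp^k)$ as a Gram matrix, hence Hermitian positive semi-definite, but generally of rank larger than one. That weaker conclusion is exactly what Definition~\ref{def:nonneg-def} requires, so your overall outline survives once you replace ``rank-one'' by ``Gram matrix'' in the ramified paragraph.
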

    \begin{proof}
        The proof follows closely from \cite[Proposition 4]{LP}, which establishes that for $F= \mathbb{Q}$, if $m \ge 1$ is an integer and $\mathfrak{F}$ is a finite subset of $\mathfrak{F}_m$, then the family $(\log L(s, \pi \times \tilde{\pi}'))_{\pi, \pi' \in \mathfrak{F}}$ is positive semi-definite. We first observe that the proof of \cite[Proposition 4]{LP} extends naturally to any number field $F$ with minimal modification. Furthermore, since their argument does not depend on $\pi$ and $\pi'$ having the same degree $m$, the same result holds for any finite subset $\mathfrak{F} \subseteq \cup_{m=1}^{\infty} \mathfrak{F}_m$. 

        It remains to show that if the family $(\log L(s, \pi \times \tilde{\pi}'))_{\pi, \pi' \in \mathfrak{F}}$ is positive semi-definite, then so is the family $(-({L'}/{L})(s, \pi \times \tilde{\pi}'))_{\pi, \pi' \in \mathfrak{F}}$. Given $\sigma > 1$, observe that for each ideal $\mathfrak{n}$, the Dirichlet coefficients at $\mathfrak{n}$ of $\log L(\sigma, \pi \times \tilde{\pi}')$ and of $-({L'}/{L})(\sigma, \pi \times \tilde{\pi}')$ differ only by a positive scalar that depends only on $\mathfrak{n}$, and is independent of $\pi$ and $\pi'$. Therefore, by Definition~\ref{def:nonneg-def}, the positive semi-definiteness of $(-(L'/L)(s, \pi \times \tilde{\pi}'))_{\pi, \pi' \in \mathfrak{F}} $ follows directly from that of $( \log L(s, \pi \times \tilde{\pi}'))_{\pi, \pi' \in \mathfrak{F}} $.
         \end{proof}
        
       %It remains to show that if the family $(\log L(s, \pi \times \tilde{\pi}'))_{\pi,\pi' \in \mathfrak{F}}$ is positive semi-definite, then so is $( -(L'/L)(s, \pi \times \tilde{\pi}'))_{\pi,\pi' \in \mathfrak{F}}$. By \cite[Lemma 3]{LP}, it suffices to show that each local factor in
       %\[
       %-(L'/L)(s, \pi \times \tilde{\pi}')=-\sum_{\kp}(L'/L)(s, \pi_{\kp} \times \tilde{\pi}_\kp')
       %\]
       %forms a positive semi-definite family in $\pi, \pi'$. 
        %From Definition~\ref{def:nonneg-def} and the Dirichlet series of  $\log L(s, \pi_{\kp} \times \tilde{\pi}'_{\kp})$ and $-(L'/L)(s, \pi_{\kp} \times \tilde{\pi}'_{\kp})$,  it follows that the positive semi-definiteness of $-(L'/L)(s, \pi_{\kp} \times \tilde{\pi}'_{\kp})$ is directly inherited from that of  $\log L(s, \pi_{\kp} \times \tilde{\pi}'_{\kp})$. 
   
    Recall $\mathbb{D}^*_{\sigma} ((\pi_1, \gamma_1, \delta_1), (\pi_2, \gamma_2, \delta_2))$ in \eqref{eq:def-d-leveln}. To show that this quantity is always a non-negative real number, it suffices to verify the inequality
   \begin{equation*}
        \frac{1}{2}\Big[-\frac{L'}{L}(\sigma, \pi_1 \times \tilde{\pi}_1) -\frac{L'}{L}(\sigma, \pi_2 \times \tilde{\pi}_2) +2\delta_1\delta_2 \Real\Big(\frac{L'}{L}(\sigma+(\gamma_2-\gamma_1)i, \pi_1 \times \tilde{\pi}_2) \Big)\Big] \ge 0.
    \end{equation*}
    Recall that $\delta_1, \delta_2 \in \{\pm1\}$. The above inequality holds by setting $\mathfrak{F}=\{\pi_1 \otimes |\cdot|^{-i\gamma_1}, \pi_2 \otimes |\cdot|^{-i\gamma_2}\}$, with the corresponding constants $\omega_{\pi_1} = \delta_{1}$ and $\omega_{\pi_2} = -\delta_{2}$, respectively, and then applying Proposition~\ref{prop:Lichtman-Pascadi}. 
    
    Next, we verify \eqref{eq:triangle-d*}. %{\color{red}I would never say that this is a ``form of the triangle inequality''...}
    %We now showing that $\mathbb{D}^*_{\sigma} ((\pi_1,\delta_1, \gamma_1), (\pi_2,\delta_2, \gamma_2))$ satisfies \eqref{eq:triangle-d*}.
    By Definition \eqref{eq:def-d-leveln}, this is equivalent to verifying that
\begin{multline*}
        0 \le \frac{1}{2}\Big[-\frac{L'}{L}(\sigma, \pi_1 \times\tilde{\pi}_1)  -\frac{L'}{L}(\sigma, \pi_2 \times \tilde{\pi}_2)-4\frac{L'}{L}(\sigma, \pi_3 \times \tilde{\pi}_3)\\
        +4\delta_1{\delta}_3\Real\Big(\frac{L'}{L}(\sigma+(\gamma_3-\gamma_1)i, \pi_1\times \tilde{\pi}_3)\Big)+4\delta_{3}{\delta}_{2}\Real\Big(\frac{L'}{L}(\sigma+(\gamma_{2}-\gamma_3)i, \pi_3 \times \tilde{\pi}_2)\Big)\\
        - 2\delta_1{\delta}_2 \Real\Big(\frac{L'}{L}(\sigma+(\gamma_2-\gamma_1)i, \pi_1 \times \tilde{\pi}_2)\Big)\Big].
\end{multline*}
Recall that $\delta_1, \delta_2, \delta_3 \in \{\pm1\}$. Setting $\mathfrak{F}=\{\pi_1 \otimes |\cdot|^{-i\gamma_1}, \pi_2 \otimes |\cdot|^{-i\gamma_2}, \pi_3 \otimes |\cdot|^{-i\gamma_3}\}$ with the corresponding constants $\omega_{\pi_1} = \delta_{1}$, $\omega_{\pi_2} = \delta_{2}$, and $\omega_{\pi_3} = -2\delta_{3}$, respectively, we conclude that the above inequality holds upon applying Proposition~\ref{prop:Lichtman-Pascadi}.

\begin{proof}[Proofs of Theorems~\ref{thm1:ZFR-cuspforms}--\ref{thm2:ZFR-rankin-selberg}.]
We begin by proving Theorem~\ref{thm1:ZFR-cuspforms}. Recalling the hypothesis of Theorem~\ref{thm1:ZFR-cuspforms}, we consider the inequality
\begin{equation}\label{eq:thm1-N>1}
\mathbb{D}^*_{\sigma}((\pi,-\gamma,1), (\tilde{\pi},\gamma,1))^2 \le 2\mathbb{D}^*_{\sigma}((\pi,-\gamma,1), (\mathbbm{1},0,-1))^2 + 2\mathbb{D}^*_{\sigma}((\mathbbm{1},0,-1), (\tilde{\pi},\gamma,1))^2,
\end{equation}
which follows from \eqref{eq:triangle-d*}.
By the definition of $\mathbb{D}^*_{\sigma} ((\pi_1,\gamma_1,\delta_1), (\pi_2,\gamma_2,\delta_2))^2$ in \eqref{eq:def-d-leveln}, the inequality \eqref{eq:thm1-N>1} simplifies to
\begin{equation*}
		0 \le -\frac{L'}{L}(\sigma, \pi\times \tilde{\pi})-2\frac{\zeta_F'}{\zeta_F}(\sigma) -\Real\Big(\frac{L'}{L}(\sigma+2i\gamma, \pi \times \pi)\Big)-4\Real\Big(\frac{L'}{L}(\sigma+i\gamma, \pi)\Big).
\end{equation*}
This recovers \eqref{eq:thm1-eq1}, which serves as the starting point for the proof of Theorem~\ref{thm1:ZFR-cuspforms} under the assumption $\N\kq_{\pi}=\N\kq_{\pi'}=1$. 
From this point, the proof proceeds identically to Section~\ref{subsec:proof-level1-thm1}.

Similarly, for Theorem~\ref{thm1:ZFR-rankin-selberg}, we consider the inequality
\begin{equation}\label{eq:leveln-thm2}
\begin{aligned}
\mathbb{D}^*_{\sigma}((\pi,-\gamma,1), (\tilde{\pi},\gamma,1))^2 &\le 2\mathbb{D}^*_{\sigma}((\pi,-\gamma,1), (\pi',0,-1))^2 + 2\mathbb{D}^*_{\sigma}((\pi',0,-1), (\tilde{\pi},\gamma,1))^2\\
&\le 4\mathbb{D}^*_{\sigma}((\pi,-\gamma, 1), (\pi',0,-1))^2,
\end{aligned}
\end{equation}
using that $\pi'$ is self-dual the last step.
By Definition \eqref{eq:def-d-leveln}, the inequality \eqref{eq:leveln-thm2} recovers \eqref{eq:rankin-4}, which serves as the starting point for the proof of Theorem~\ref{thm1:ZFR-rankin-selberg} under the assumption 
$\N\kq_{\pi}=\N\kq_{\pi'}=1$.  The rest of the proof then follows the same steps as in Section~\ref{subsec:thm2}.

Lastly, to prove Theorem~\ref{thm2:ZFR-rankin-selberg}, we consider the inequality 
\begin{equation}\label{eq:leveln-thm3}
\mathbb{D}^*_{\sigma}((\pi,-\gamma,1), (\pi,\gamma,1))^2 \le 2\mathbb{D}^*_{\sigma}((\pi,-\gamma,1), (\tilde{\pi}',0,-1))^2 + 2\mathbb{D}^*_{\sigma}((\tilde{\pi}',0,-1), (\pi,\gamma,1))^2.    
\end{equation}
Again, by \eqref{eq:def-d-leveln}, the inequality \eqref{eq:leveln-thm3} recovers \eqref{eq:rankin-6}, which serves as the starting point for the proof of Theorem~\ref{thm2:ZFR-rankin-selberg} under the assumption
$\N\kq_{\pi}=\N\kq_{\pi'}=1$. The rest of the proof follows the same steps as in Section~\ref{subsec:thm3}.
\end{proof}

 %The key idea here is to note that the initial inequalities in Theorems~\ref{thm1:ZFR-rankin-selberg} and~\ref{thm2:ZFR-rankin-selberg}--namely, inequalities \eqref{eq:rankin-4} and \eqref{eq:rankin-6}--can each be rewritten as inequalities of the form \eqref{eq:triangle-d*}.

\begin{rek}
    The method in this section actually supersedes the work in Section~\ref{sec:proof-level1}, which is restricted to the case $\N\kq_{\pi}=\N\kq_{\pi'}=1$. We included Section~\ref{sec:proof-level1}, which uses the family of metrics  $\mathbb{D}_{\sigma}(\delta_1a_{\pi_1}(\kn)\N\kn^{i\gamma_1},\delta_2 a_{\pi_2}(\kn)\N\kn^{i\gamma_2})$, to illustrate how Koukoulopoulos’s work can be extended in a similar manner.
\end{rek}

%Recall the hypothesis of Theorem~\ref{thm1:ZFR-rankin-selberg}, we consider the triangle inequality  \begin{equation}\label{eq:triangle-d*}
%\mathbb{D}^*_{\sigma}((\pi,1,-\gamma), (\tilde{\pi},1,\gamma))^2 \le 2\mathbb{D}^*_{\sigma}((\pi,1,-\gamma), (\pi',-1,0))^2 + 2\mathbb{D}^*_{\sigma}( (\pi',-1,0), (\tilde{\pi},1,\gamma))^2.
%\end{equation}
%The above inequality actually recovers \eqref{eq:thm1-eq1} in the proof of Theorem~\ref{thm1:ZFR-cuspforms}. Hence, we can follow the proof of Theorem~\ref{thm1:ZFR-cuspforms} almost verbatim. 

%Recall the hypothesis of Theorem~\ref{thm1:ZFR-cuspforms}, we consider the triangle inequality  \begin{equation}\label{eq:triangle-d*}
%\mathbb{D}^*_{\sigma}((\pi,1,-\gamma), (\pi,1,\gamma))^2 \le 2\mathbb{D}^*_{\sigma}((\pi,1,-\gamma), (\tilde{\pi}',-1,0))^2 + 2\mathbb{D}^*_{\sigma}((\tilde{\pi}',-1,0), (\pi,1,\gamma))^2.\end{equation}
%The above inequality actually recovers \eqref{eq:thm1-eq1} in the proof of Theorem~\ref{thm1:ZFR-cuspforms}. Hence, we can follow the proof of Theorem~\ref{thm1:ZFR-cuspforms} almost verbatim. 

%Recall the hypothesis of Theorem~\ref{thm1:ZFR-cuspforms}, we consider the same triangle inequality as in Theorem 1.3.
%The above inequality actually recovers \eqref{eq:thm1-eq1} in the proof of Theorem~\ref{thm1:ZFR-cuspforms}. Hence, we can follow the proof of Theorem~\ref{thm1:ZFR-cuspforms} almost verbatim. 

\begin{appendices}
\section{An inequality on the Rankin--Selberg conductor}
    We now restate \eqref{eq:conductor-ineq} in the following lemma and provide a detailed proof, which closely follows the argument in \cite[Theorem A.2]{Humphries-Brumley} and incorporates the correction noted on \cite[P. 9]{Ramakrishnan-Yang}.
    \begin{lem}\label{lem:brumley}
        If $(\pi,\pi')\in\mathfrak{F}_{m}^*\times\mathfrak{F}_{m'}^*$, then
        \[
        \mathfrak{C}(it, \pi\times \pi') \le  \mathfrak{C}(\pi)^{m'}\mathfrak{C}(\pi')^m(|t|+3)^{mm'[F:\Q]}.
        \]
    Furthermore, we have $\mathfrak{C}(\pi \times \pi') \le \mathfrak{C}(\pi)^{m'}\mathfrak{C}(\pi')^{m}.$  
    \end{lem}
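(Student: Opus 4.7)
The plan is to separate $\mathfrak{C}(it,\pi\times\pi')$ into its three constituent factors---the discriminant $D_F^{mm'}$, the arithmetic conductor $\N\kq_{\pi\times\pi'}$, and the product of local archimedean conductors $\prod_{v\mid\infty}\mathfrak{C}_v(it,\pi\times\pi')$---and bound each in turn. The discriminant bound is immediate from $D_F\ge 1$: writing $D_F^{mm'}\le D_F^{2mm'}=D_F^{mm'}\cdot D_F^{mm'}$ accounts exactly for the discriminant factors appearing in $\mathfrak{C}(\pi)^{m'}\mathfrak{C}(\pi')^m$. For the arithmetic conductor I would invoke the Bushnell--Henniart bound $\N\kq_{\pi\times\pi'}\le\N\kq_\pi^{m'}\N\kq_{\pi'}^m$, which is a purely local statement at each ramified prime and is the piece used in \cite[Theorem A.2]{Humphries-Brumley}.

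The archimedean bound is the heart of the matter. At each $v\mid\infty$ I would use the archimedean local Langlands correspondence (as in \cite[Section 3.1]{MS}) to realize the parameters $\mu_{j,j',\pi\times\pi'}(v)$ as exponents of characters occurring in the decomposition of $\phi_{\pi_v}\otimes\phi_{\pi'_v}$, where $\phi_{\pi_v}$, $\phi_{\pi'_v}$ are the Weil-group representations attached to $\pi_v$, $\pi'_v$. Decomposing each of these into its irreducible constituents (characters of $\CC^\times$ when $F_v=\CC$, and characters together with two-dimensional discrete series of $W_\R$ when $F_v=\R$) and computing the tensor product constituent-by-constituent yields explicit formulas for the $\mu_{j,j',\pi\times\pi'}(v)$ in terms of the $\mu_{j,\pi}(v)$ and $\mu_{j',\pi'}(v)$. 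From these formulas I would verify the pointwise inequality
\[
|\mu_{j,j',\pi\times\pi'}(v)|+3 \le (|\mu_{j,\pi}(v)|+3)(|\mu_{j',\pi'}(v)|+3),
\]
combine it with the elementary estimate $|\mu+it|+3\le (|\mu|+3)(|t|+3)$ valid for all $\mu\in\CC$ and $t\in\R$, and raise the product over $j,j'$ to the power $[F_v:\R]$ to obtain
\[
\mathfrak{C}_v(it,\pi\times\pi') \le \mathfrak{C}_v(\pi)^{m'}\mathfrak{C}_v(\pi')^m\, (|t|+3)^{mm'[F_v:\R]}.
\]
Taking the product over $v\mid\infty$ and using $\sum_{v\mid\infty}[F_v:\R]=[F:\Q]$ produces the archimedean contribution, and assembling the three pieces gives the first claim. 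The second claim $\mathfrak{C}(\pi\times\pi')\le \mathfrak{C}(\pi)^{m'}\mathfrak{C}(\pi')^m$ follows by the same scheme with $t=0$ and the tighter archimedean step that suppresses the $(|t|+3)$ factor altogether.

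The main obstacle is the verification of the pointwise archimedean bound on the tensor product parameters. As flagged on \cite[p.~9]{Ramakrishnan-Yang}, a naive decomposition that replaces each two-dimensional discrete series block of $\phi_{\pi_v}$ or $\phi_{\pi'_v}$ by a pair of characters with the same spectral shift produces an incorrect list of $\mu_{j,j',\pi\times\pi'}(v)$, because it is the full Langlands parameter (including the weight of the discrete series, not merely its real spectral shift) that feeds into the archimedean Gamma factors. Handling $W_\R$-representations with two-dimensional constituents therefore requires separately treating the character$\,\otimes\,$character, character$\,\otimes\,$discrete series, and discrete series$\,\otimes\,$discrete series cases, where in the last case one uses the standard branching $\sigma_k\otimes\sigma_\ell\cong\sigma_{k+\ell}\oplus\sigma_{|k-\ell|}$ of $W_\R$-discrete series. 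Once the correct expressions are recorded, the verification of the pointwise bound reduces to elementary inequalities of the shape $(a+b)+3\le (a+3)(b+3)$ valid for $a,b\ge 0$, which can then be assembled uniformly across places.
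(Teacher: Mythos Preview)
Your overall strategy matches the paper's: split $\mathfrak{C}(it,\pi\times\pi')$ into its discriminant, arithmetic, and archimedean factors, invoke Bushnell--Henniart for the arithmetic conductor, and at each $v\mid\infty$ decompose the Weil-group parameters into irreducibles and bound the conductor of each tensor-product constituent. The $W_\R$ case analysis you outline (character$\,\otimes\,$character, character$\,\otimes\,$discrete series, discrete series$\,\otimes\,$discrete series with $\sigma_k\otimes\sigma_\ell\cong\sigma_{k+\ell}\oplus\sigma_{|k-\ell|}$) is exactly what the paper does. Where your proposal diverges is in the archimedean estimate itself, and there is a genuine gap.

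The missing ingredient is the Jacquet--Shalika bound $|\Re(\nu)|\le\frac12$ on the spectral parameter of each character $\chi_{k,\nu}$ of $\C^\times$ arising from a unitary cuspidal representation. Your pointwise inequality $|\mu_{j,j'}|+3\le(|\mu_j|+3)(|\mu_{j'}|+3)$ is \emph{false} without it: take $\chi_{2N,-N}$ (so $\mu=\nu+|k|/2=0$) and $\chi_{-2N,-N}$ (so $\mu'=0$); their product is $\chi_{0,-2N}$, giving $\mu''=-2N$ and $|\mu''|+3=2N+3>9=(|\mu|+3)(|\mu'|+3)$. The point is that $\mu=\nu+|k|/2$ alone does not control $|k|$ and $|\nu|$ separately; one needs the unitarity input to prove the key inequality $|k|/2+|\nu|\le 3|\mu|+1$, after which your elementary estimate $(a+b)+3\le(a+3)(b+3)$ does finish the job. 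The paper establishes this key inequality by a case split $|k|=0,1,\ge 2$, using Jacquet--Shalika only in the last case.

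You have also misidentified the Ramakrishnan--Yang correction. It is not about recovering the correct $W_\R$ tensor-product parameters (that part is standard and exactly as you describe). Rather, it repairs the $F_v=\C$ argument in \cite{Humphries-Brumley}: the inequality preceding \cite[(A.13)]{Humphries-Brumley} fails at $|k|=1$, and a separate elementary argument is supplied there. So the subtlety lives in the complex place, not in the real discrete-series bookkeeping.
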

    \begin{proof}
	For the arithmetic conductor, we use the bound $\N\kq_{\pi\times{\pi'}}\mid \N\kq_{\pi}^{m'}\N\kq_{{\pi'}}^{m}$ as given in \cite[Theorem 2]{BushHen}. For the archimedean conductor,  it remains to show that
    \[
    \mathfrak{C}_{v}(it, \pi \times \pi') \le \mathfrak{C}_{v}( \pi )^{m'}\mathfrak{C}_{v}( \pi')^{m}(|t|+3)^{mm'[F_{v}:\mathbb{R}]}.
     \]
    We note that the proof of this lemma closely follows \cite[Theorem A.2]{Humphries-Brumley}. Utilizing Langlands' classification of the admissible dual of $\GL_{n}(F_v)$,  the representations $\pi_v$ and $\pi'_v$ correspond to the direct sums $\oplus \varphi_i$ and $\oplus\varphi_j'$, for irreducible representations $\varphi_i$ and $\varphi'_j$ of the Weil group $W_{F_v}$ of $F_v$. By definition, we have 
     \[
     L(s,\pi_v) = \prod_i L(s, \varphi_i), \quad L(s,\pi_v') = \prod_j L(s, \varphi_j'), \quad L(s,\pi_v \times \pi_v') = \prod_{i,j} L(s, \varphi_i \times \varphi'_j).
     \]
     This factorization also applies to the associated conductors.   Let $d_i, d'_j$ denote the dimensions of $\varphi_i$ and $\varphi'_j$, respectively, so that $n = \sum d_i$ and $n' = \sum d_j'$. Omitting the indices $i$ and $j$, it remains to prove that for irreducible representations $\varphi$ and $\varphi'$ of $W_{F_v}$, of respective dimensions $d$ and $d'$, we have 
   \begin{equation}\label{eq:goal-archimedean}
       \mathfrak{C}_{v}(it, \varphi \otimes \varphi') \le \mathfrak{C}_{v}(\varphi )^{d'}\mathfrak{C}_{v}(  \varphi')^{d}(|t|+3)^{dd'[F_{v}:\mathbb{R}]}.
   \end{equation}

   When $F_v = \mathbb{C}$, the Weil group is given by $W_{\mathbb{C}} = \mathbb{C}^{\times}$, implying that all irreducible representations are one-dimensional. Any such character can be expressed as $\chi_{k,v}(z) = (z/|z|)^k|z|^{2\nu}$, for $k \in \mathbb{Z}$ and $\nu \in \mathbb{C}$. Defining $\mu = \nu + |k|/2$, the corresponding $L$-factor is $\Gamma_{\mathbb{C}}(s+\mu)$ (see \cite[(4.6)]{Knapp}). According to the recipe in \cite[(21) and (31)]{Iwaniec-Sarnak}, we obtain
   \[
        \mathfrak{C}_v(it, \varphi) = (|\mu+it|+3)^2.
   \]
   Now if $\varphi = \chi_{k,\nu}$ and $\varphi = \chi_{k',\nu'}$, then $\varphi \otimes \varphi' = \chi_{k+k',\nu+\nu'}$. This leads to the bound
   \begin{equation}\label{eq:local-cond}
        \mathfrak{C}_v(it, \varphi \otimes \varphi') = \Big( \Big|\frac{|k+k'|}{2} +\nu+\nu'+it\Big|+3\Big)^2 \le \Big(\Big(\frac{|k|}{2}+|\nu|\Big) + \Big(\frac{|k'|}{2}+|\nu'|\Big)+|t|+3 \Big)^2.
    \end{equation}
   
    Next, we establish that $\frac{|k|}{2} + |\nu| \le 3|\mu|+1$. This part of the proof in \cite[Appendix A.2]{Humphries-Brumley} is slightly incomplete, as the inequality preceding \cite[(A.13)]{Humphries-Brumley} fails when $k=1$. This issue was identified and corrected in \cite[P. 9]{Ramakrishnan-Yang}. We present a complete proof of this part as follows. 
    \begin{enumerate}
        \item If $k=0$, then $\frac{|k|}{2} + |\nu| \le  3|\mu|+1$ holds trivially. 
        \item If $|k|=1$, we have $\frac{|k|}{2} + |\nu| = |\nu|+\frac{1}{2} \le |\nu +\frac{1}{2}|+1 \le 3|\nu +\frac{1}{2}|+1 = 3|\mu|+1$.
        \item If $|k| \ge 2$, we recall \cite[Corollary 2.5]{Jacquet-Shalika}, namely, $|\Real(\nu)| \le \frac{1}{2}$. Since $\frac{|k|}{2} > 1$, we have $|\Real(\nu)| \le |\Real(\nu)+\frac{|k|}{2}|$, and also $2|\Real(\nu)+\frac{|k|}{2}| \ge \frac{|k|}{2}$.  It follows that
        \[
        |\nu| +\frac{|k|}{2} \le \Big|\nu + \frac{|k|}{2}\Big| +\frac{|k|}{2} \le \Big|\nu + \frac{|k|}{2}\Big|+ 2\Big|\Real(\nu)+\frac{|k|}{2}\Big| \le 3\Big|\nu+\frac{|k|}{2}\Big| \le 3|\mu|+1.\]
    \end{enumerate}
    Thus we have shown that $\frac{|k|}{2} + |\nu| \le  3|\mu|+1$. Substituting this into \eqref{eq:local-cond}, we obtain
    \[
        \mathfrak{C}_v(it, \varphi \otimes \varphi') \le (  3|\mu| + 3|\mu'|+|t|+5)^2 \le (|\mu|+3)^2(|\mu'|+3)^2( |t|+3)^2,
   \]
   which implies \eqref{eq:goal-archimedean} in the case where  $F_v = \mathbb{C}$.
   
    When $F_v = \mathbb{R}$, each irreducible representation $\varphi$ of $W_{\R} = \mathbb{C}^{\times} \cup j\mathbb{C}^{\times}$ has dimension 1 or 2. 
    \begin{itemize}
        \item If $\varphi$ is one-dimensional, its restriction to $\mathbb{C}^{\times}$  takes the form $\chi_{0,\nu}$ for $\nu \in \mathbb{C}$ (see \cite{Knapp}, (3.2)). Defining $k =1 -\varphi(j) \in \{0,2\}$, we set $\mu = \nu + k/2$, yielding 
        \[
        L(s, \varphi) = \Gamma_{\mathbb{R}}(s+\mu), \qquad \mathfrak{C}_{v}(it, \varphi) = |\mu+it|+3.
        \]
        \item  If $\varphi$ is two-dimensional, then it is the induction of $\chi_{k,\nu}$ from  $\mathbb{C}^{\times}$ to $\GL_2(\mathbb{R})$, where $k \ge 1$ and $\nu \in \mathbb{C}$. Setting $\mu =\nu  + k/2$, we obtain 
        \[
        L(s, \varphi) = \Gamma_{\mathbb{C}}(s+\mu), \qquad \mathfrak{C}_v(it, \varphi) = (|\mu+it|+3)^2.
        \]
    \end{itemize}
      In both cases, let $(k,\nu)$ and $(k',\nu')$ be the parameters associated with $\varphi$ and $\varphi'$, respectively. We now analyze the  tensor product %{\color{red}product?} 
      parameters.
    \begin{enumerate}
        \item If both  $\varphi$ and $\varphi'$ are one-dimensional, then $\varphi \otimes \varphi'$ remains one-dimensional, with parameter $(1-\varphi(j)\varphi'(j), \nu+\nu')$. In this case, \eqref{eq:goal-archimedean} is equivalent to \begin{equation}\label{eq:complex-1}
        \Big|\frac{1-\varphi(j)\varphi'(j)}{2}+\nu+\nu'+it\Big|+3 \le  \Big( \frac{k}{2} +|\nu|\Big) + \Big( \frac{k'}{2} +|\nu'|\Big)+|t| +3.
        \end{equation}
        Since $1-\varphi(j)\varphi'(j) \le (1-\varphi(j))+(1-\varphi'(j)) = k+k'$, we 
        can apply the same analysis as in the complex setting to obtain \eqref{eq:complex-1}.
        \item If $\varphi$ is one-dimensional, and $\varphi'$ is irreducible and two-dimensional, then $\varphi \otimes \varphi'$ is irreducible and two-dimensional, induced from $\mathbb{C}^{\times}$ by the character $\chi_{0,\nu}\chi_{k',\nu'}=\chi_{k',\nu+\nu'}$. Thus $\varphi \otimes \varphi'$ has parameters $(k', \nu +\nu')$. Then \eqref{eq:goal-archimedean} is equivalent to
        \[
        (|\mu+\mu'+it|+3)^2 \le (|\mu|+3)^2(|\mu'|+3)^2(|t|+3)^2,
        \]
        which holds by the triangle inequality.
        \item Let $\varphi$ and $\varphi'$ be both irreducible and two-dimensional, and let $k \ge k'$. Then $\varphi \otimes \varphi'$ is the direct sum of two two-dimensional representations, induced from $\mathbb{C}^{\times}$ from the characters $\chi_{k,\nu}\chi_{k',\nu'} = \chi_{k+k',\nu+\nu'}$ and $\chi_{-k,-\nu}\chi_{k',\nu'} = \chi_{k'-k,\nu'-\nu}$. (The latter representation is reducible when $k=k'$.) This shows that 
            \begin{align*}
                L(s, \varphi \otimes \varphi') &= \Gamma_{\mathbb{C}}(s+\mu+\mu')\Gamma_{\mathbb{C}}(s+\mu-\mu'),\\ 
                \intertext{and}
                \mathfrak{C}_v(it, \varphi \otimes \varphi') &= (|\mu+\mu'|+3)^2(|\mu-\mu'|+3)^2.
            \end{align*}
        Thus \eqref{eq:goal-archimedean} is equivalent to 
        \[
        (|\mu+\mu'+it|+3)^2(|\mu-\mu'+it|+3)^2 \le (|\mu|+3)^4(|\mu'|+3)^4(|t|+3)^4,
        \]
        which holds by the triangle inequality.
    \end{enumerate}
    This completes the proof of Lemma~\ref{lem:brumley}.
    \end{proof}
\end{appendices}

\bibliographystyle{abbrv}
\bibliography{ref-positive}

\end{document}